\documentclass[11pt,a4paper,reqno]{amsart}
\usepackage[T1]{fontenc}
\usepackage[utf8]{inputenc}
\usepackage{lmodern,microtype,amsmath,amssymb,mathtools,mathrsfs}
\usepackage[a4paper,left=28mm,right=28mm,top=27mm,bottom=27mm,headheight=14pt,headsep=7mm,footskip=11mm]{geometry}
\usepackage{enumitem,needspace}
\usepackage[hidelinks,unicode]{hyperref}
\hypersetup{pdftitle={The Rademacher quotient of L1 embeds into L1 and its weak Calkin algebra is uniformly incompressible},pdfauthor={Amir Bahman Nasseri},pdfsubject={Rectangle-difference embedding, weak essential norms, and uniform incompressibility}}
\allowdisplaybreaks[2]
\setlist[enumerate]{label=\textup{(\roman*)},leftmargin=2em,itemsep=3pt}
\newtheorem{maintheorem}{Theorem}

\newtheorem{theorem}{Theorem}[section]
\newtheorem{proposition}[theorem]{Proposition}
\newtheorem{lemma}[theorem]{Lemma}
\newtheorem{corollary}[theorem]{Corollary}
\theoremstyle{definition}

\theoremstyle{remark}
\newtheorem{remark}[theorem]{Remark}
\newtheorem{example}[theorem]{Example}
\newcommand{\N}{\mathbb N}
\newcommand{\E}{\mathbb E}
\newcommand{\PP}{\mathbb P}
\newcommand{\Prob}{\mathbb P}
\newcommand{\A}{\mathcal A}
\newcommand{\F}{\mathcal F}
\newcommand{\B}{\mathscr B}
\newcommand{\W}{\mathscr W}
\newcommand{\Comp}{\mathscr K}
\newcommand{\Ap}{\mathscr A}
\newcommand{\Qw}{\mathfrak Q_{\!w}}
\newcommand{\co}{\mathrm{co}}
\newcommand{\Id}{\mathrm I}
\newcommand{\one}{\mathbf 1}
\newcommand{\dd}{\,d}
\DeclareMathOperator{\dist}{dist}
\DeclareMathOperator{\spanop}{span}
\DeclareMathOperator{\ran}{ran}
\DeclareMathOperator{\esssup}{ess\,sup}

\newcommand{\norm}[1]{\lVert #1\rVert}
\newcommand{\abs}[1]{\lvert #1\rvert}
\newcommand{\pair}[2]{\langle #1,#2\rangle}

\title[The Rademacher quotient and its weak Calkin algebra]{The Rademacher quotient of $L^1$ embeds into $L^1$ and its weak Calkin algebra is uniformly incompressible}
\author{Amir Bahman Nasseri}
\date{19 September 2026; revised 22 September 2026}
\subjclass[2020]{Primary 46B03, 46H10, 47L20; Secondary 46E30, 42C10, 47B07}
\keywords{Rademacher functions, quotient of $L^1$, hypercontractivity, weak essential norm, weak Calkin algebra, uniform incompressibility}
\begin{document}
\begin{abstract}
Let $R$ be the closed linear span of the Rademacher functions in $L^1(0,1)$.
We prove that $L^1(0,1)/R$ embeds isomorphically into $L^1(0,1)$, answering affirmatively a question of Gonz\'alez and Mart\'inez-Abej\'on \cite[p.~140]{GM97}.
We then use this embedding to show that the weak Calkin algebra of the quotient is uniformly incompressible.
\end{abstract}
\maketitle

\section*{Introduction and main results}
Quotients of $L^1$ by reflexive subspaces need not be isomorphic to subspaces of $L^1$; a counterexample with a subspace isomorphic to $\ell_2$ is given in \cite[Proposition~1]{GM97}.
The position of the reflexive subspace is therefore essential.
Gonz\'alez and Mart\'inez-Abej\'on asked whether the quotient by the closed span of the Rademacher functions embeds into $L^1(0,1)$; see \cite[p.~140]{GM97}.
We address that question by an explicit construction and use the resulting embedding to study the weak Calkin algebra of the quotient.

Throughout, all Banach spaces are real and all operators are bounded and linear.
Let
\[
 \Delta\coloneqq\{-1,1\}^{\N},\qquad
 \Prob\coloneqq\bigotimes_{j\geq1}\tfrac12(\delta_{-1}+\delta_1),\qquad
 E\coloneqq L^1(\Delta,\Prob),\quad r_j(x)\coloneqq x_j,
\]
and set
\begin{equation}\label{eq:spaces}
 R\coloneqq\overline{\spanop\{r_j:j\geq1\}}^{\,L^1},\qquad
 \A\coloneqq\mathbb R\one+R,\qquad X\coloneqq E/R.
\end{equation}
The metric quotient map is denoted by $q:E\to X$.
Constants are not included in $R$.
The usual binary-digit identification carries this pair to the classical Rademacher pair on $(0,1)$.
Every element of $R$ has mean zero, so $\A$ is closed: convergence of $c_k\one+u_k$ first gives convergence of $c_k$ by taking means, and then convergence of $u_k\in R$.

For independent $x,y,\sigma\in\Delta$, define $z,w\in\Delta$ by
\begin{equation}\label{eq:swaps}
 z_j\coloneqq\frac{1+\sigma_j}{2}x_j+\frac{1-\sigma_j}{2}y_j,
 \qquad
 w_j\coloneqq\frac{1-\sigma_j}{2}x_j+\frac{1+\sigma_j}{2}y_j.
\end{equation}
Our rectangle-difference operator is
\begin{equation}\label{eq:D-infinite}
 Df(x,y,\sigma)\coloneqq f(x)+f(y)-f(z)-f(w).
\end{equation}
Each of the four points has distribution $\Prob$; in particular $D:E\to L^1(\Prob^{\otimes3})$ is well defined.
The finite-dimensional version of this defect is exactly the four-query direct-sum defect of Bogdanov and Prakriya \cite[Algorithm~1]{BP21}, after identifying $\{-1,1\}$ with $\{0,1\}$.
The same query pattern was proposed for $\mathbb F_2$-valued functions by Dinur and Golubev \cite[Test~6]{DG19}.
On a two-point product, scalar-valued direct sums are precisely the functions in $\A_n$ defined below.
The soundness theorem in \cite[Theorem~1]{BP21} controls Hamming distance by the probability that the defect is nonzero.
We instead establish a dimension-free estimate of the $L^1$ distance by the integral of the absolute defect.
For a fixed bipartition of the variables, rectangle differences and $L^p$ approximation estimates were already considered by Bogdanov and Wang \cite[Section~3, Claims~5--6]{BW20}.
There the approximants are sums of arbitrary functions of the two coordinate blocks; here the required approximants are affine Rademacher functions, with a constant uniform in the number of coordinates.
Thus the defect and the probabilistic estimates used below have precedents in the literature; the estimate to be proved is their dimension-free $L^1$ quotient application.

\begin{maintheorem}[The embedding]\label{thm:main}
There is an absolute constant $C\geq1$ such that
\begin{equation}\label{eq:main-distance}
 C^{-1}\dist_{L^1(\Prob)}(f,\A)
 \leq \norm{Df}_{L^1(\Prob^{\otimes3})}
 \leq 4\dist_{L^1(\Prob)}(f,\A)
 \qquad(f\in L^1(\Prob)).
\end{equation}
Consequently $\ker D=\A$, and the map
\begin{equation}\label{eq:main-J}
 J:L^1(\Prob)/R\longrightarrow
 \mathbb R\oplus_1 L^1(\Prob^{\otimes3}),
 \qquad J(f+R)\coloneqq(\E f,Df),
\end{equation}
is well defined and satisfies
\begin{equation}\label{eq:main-J-bounds}
 \frac{1}{2C}\norm{f+R}_{L^1/R}
 \leq \norm{J(f+R)}
 \leq 5\norm{f+R}_{L^1/R}.
\end{equation}
Both $L^1(\Prob)/R$ and $L^1(\Prob)/\A$ therefore embed into $L^1(0,1)$. One admissible value of $C$ is given in \eqref{eq:constants} below.
\end{maintheorem}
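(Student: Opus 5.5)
The upper bound and the formal consequences are routine, so I will treat them first and concentrate on the lower bound. For $a\in\A$, $a=c\one+\sum_j a_jr_j$, we have $a(x)+a(y)=a(z)+a(w)$ pointwise, because for each $j$ the pair $\{z_j,w_j\}$ equals $\{x_j,y_j\}$. Hence $Da=0$, so $Df=D(f-a)$, and each of the four terms has $L^1$ norm $\norm{f-a}_1$ since every point is $\Prob$-distributed. This gives $\norm{Df}_1\le4\norm{f-a}_1$, and taking the infimum over $a$ yields the right-hand inequality of \eqref{eq:main-distance}.

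Once the left-hand inequality is known, $\ker D=\A$ follows at once. For $J$: $D$ vanishes on $R$, but $\E$ does not vanish on $\A$, so $J$ is well defined on $E/R$ with $\norm{J(f+R)}\le|\E f|+4\dist(f,\A)\le\norm{f+R}+4\norm{f+R}$, using $\E u=0$ for $u\in R$. For the lower bound, write $f=c\one+u+g$ with $u\in R$ and $\norm{g}_1\le 2\dist(f,\A)$ (a near-best approximant suffices). Then $\norm{f+R}\le|c|+\norm g\le|\E f|+|\E g|+\norm g\le|\E f|+2\norm g\le|\E f|+4C\norm{Df}$. With the constants adjusted (the factor $2$ is absorbed by taking an exact best approximant or by enlarging $C$) this gives $\tfrac1{2C}$. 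The embedding into $L^1(0,1)$ then holds because $\mathbb R\oplus_1L^1(\Prob^{\otimes3})$ is isometric to an $L^1$ space of a separable nonatomic measure (plus an atom), which embeds into $L^1(0,1)$. For $E/\A$ the same argument applies with $D$ alone.

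The main step is the dimension-free stability estimate $\dist(f,\A)\le C\norm{Df}_1$. I would first reduce to $f$ depending on finitely many coordinates $n$, by martingale approximation: conditional expectations $\E_n$ commute with $D$ in the sense that $D\E_nf$ is a conditional expectation of $Df$, and $\E_n$ maps $\A$ into $\A_n$. For such $f$ I would construct the approximant explicitly. Let $c=\E f$ and $a_j=\E[fr_j]$ (the Fourier level-$\le1$ part), and set $h=f-c-\sum a_jr_j$. Then $h$ has no Fourier mass on levels $0$ and $1$, and $Dh=Df$. The task is to show $\norm h_1\lesssim\norm{Dh}_1$ for such $h$.

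Averaging $Dh$ over $y$ and $\sigma$ with $x$ fixed gives $h(x)+\E h-2\,\E_{y,\sigma}h(z)=h(x)-2T_{1/2}h(x)$, where $T_\rho$ is the noise operator ($z$ is $x$ with each coordinate rerandomized with probability $1/2$, so $z$ is $1/2$-correlated with $x$). Hence $\norm{h-2T_{1/2}h}_1\le\norm{Dh}_1$. On levels $k\ge2$ the multiplier $1-2^{1-k}$ is at least $1/2$, but inverting it in $L^1$ is the obstacle: the inverse $\sum_m 2^mT_{1/2}^m$ diverges, and only on levels $\ge2$ can $2T_{1/2}$ be a contraction. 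The fix I would try is to iterate the identity differently: conditioning on $\sigma$ and $y$ more cleverly, or using $\E_y Dh$ with partial $\sigma$, produces $h-T_\rho h-T_{\rho'}h+\dots$ with varying $\rho$. The aim is an $L^1$ expression $h=\sum_k \Phi_k(Dh)$ with $\sum\norm{\Phi_k}_{1\to1}<\infty$. Hypercontractivity, which $2T_{1/2}$ needs on high levels ($\norm{T_\rho h}_2\le\norm h_{1+\rho^2}$), will handle the low-level part: the level-$2$ and level-$3$ mass of $h$ is controlled in $L^2$ via $L^1$–$L^2$ equivalence on low-degree chaos (Khintchine–Bonami), while $T_{1/2}^m$ for large $m$ kills high levels geometrically in $L^1$ after projection. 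Making this splitting into low and high Fourier levels consistent in $L^1$, where level projections are unbounded, is where I expect the real difficulty. There I would use the smoothed projections $T_\rho$ rather than sharp ones, and collect the constants into \eqref{eq:constants}.
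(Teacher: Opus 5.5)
Your upper bound, your passage from $\Delta_n$ to $\Delta$, your bounds for $J$ and your realization in $L^1(0,1)$ are fine. For the passage, $DE_nf$ is indeed the conditional expectation of $Df$ given the first $n$ coordinates of $x,y,\sigma$. For $J$, use a $(1+\eta)$-optimal approximant and let $\eta\downarrow0$.

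The gap is in the main step. The inequality you set as ``the task'' is $\norm{h}_1\lesssim\norm{D_nh}_1$ for $h=f-\Pi_nf$, with a constant independent of $n$. It is false. Take $f=2^n\one_{\{(1,\ldots,1)\}}$ on $\Delta_n$. Then $\norm{f}_1=\E f=\E(fr_j)=1$, so $h=f-\one-\sum_{j\leq n}r_j$. Khintchine gives $\norm{h}_1\geq\norm{\sum_{j\leq n}r_j}_1-2\geq\sqrt{n/3}-2$, whereas $\norm{D_nh}_1=\norm{D_nf}_1\leq4$.

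No choice of smoothed projections can repair this. Suppose $h=\sum_k\Phi_k(D_nh)$ held for all $h$ without mass on levels $0$ and $1$, with $\sum_k\norm{\Phi_k}_{1\to1}\leq K$ independent of $n$. Since $D_n\Pi_n=0$, this would give $I-\Pi_n=(\sum_k\Phi_k)D_n$ and hence $\norm{I-\Pi_n}_{1\to1}\leq4K$. That contradicts the same example; equivalently, $R\cong\ell_2$ would be complemented in $L^1$. Separate $L^1$ control of the low and high Fourier levels of $h$ fails for the same reason. The lower bound in \eqref{eq:main-distance} is a statement about the quotient distance, and the near-best $L^1$ approximant cannot come from any uniformly bounded linear inversion of $D$ in $L^1$.

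The missing idea is the paper's truncation argument, which has three steps.
\begin{enumerate}
\item Normalize. After subtracting a best $L^1$ approximant, assume $\norm{f}_1=\dist_{L^1}(f,\A_n)=1$, and put $\varepsilon=\norm{D_nf}_1\leq1$.
\item Prove a tail estimate, $\int_{\{|f|>M\}}|f|\,d\Prob_n\leq2\varepsilon+6^{7/3}M^{-1/3}$. On the part of $\{|f(U_0)|>M\}$ where the other three values are at most $|f(U_0)|/6$, one has $|D_nf|\geq|f(U_0)|/2$; this part costs at most $2\varepsilon$. Elsewhere some $U_j$ has $|f(U_j)|>|f(U_0)|/6$. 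The tail of $\min(|f(U_0)|,6|f(U_j)|)$ at level $t$ is then bounded by the probability that $U_0$ and $U_j$ both lie in $B=\{|f|>t/6\}$. That probability is at most $\Prob_n(B)^{4/3}$, because $\pair{\one_B}{T_{1/2}\one_B}=\norm{T_{1/\sqrt2}\one_B}_2^2\leq\norm{\one_B}_{3/2}^2$ by hypercontractivity. A layer-cake computation finishes the estimate.
\item Truncate and use $L^{5/4}$. With $M=\varepsilon^{-3}$ and $b=f\one_{\{|f|\leq M\}}$, one gets $\norm{f-b}_1\lesssim\varepsilon$, $\norm{D_nb}_1\lesssim\varepsilon$ and $\norm{D_nb}_\infty\leq4M$. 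Interpolation then gives $\norm{D_nb}_{5/4}\lesssim\varepsilon^{1/5}$. In $L^{5/4}$ your averaging identity does invert. Set $h=b-\Pi_nb$ and $g=D_n^\dagger D_nh=4(I-2T_{1/2})h$. Then $h=g/4+2T_{1/2}h$ and $\norm{T_{1/2}h}_2\leq\frac12\norm{T_{1/2}g}_2\leq\frac12\norm{g}_{5/4}$, so $\norm{b-\Pi_nb}_{5/4}\leq5\norm{D_nb}_{5/4}$.
\end{enumerate}
Combining, $1\leq\norm{f-b}_1+\dist_{L^{5/4}}(b,\A_n)\lesssim\varepsilon+\varepsilon^{1/5}$, which forces $\varepsilon\gtrsim1$.

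The orthogonal projection is applied only to the truncated function and only for $p=5/4>1$, where hypercontractivity into $L^2$ is available. The endpoint $p=1$ is reached through the best-approximation normalization and the tail estimate, not through inversion.
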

For Banach spaces $Y,Z$, write $\B(Y,Z)$ and $\W(Y,Z)$ for the bounded and weakly compact operators and put
\[
 \norm{A}_w=\inf_{W\in\W(Y,Z)}\norm{A-W},\qquad
 Y^{\co}=Y^{**}/J_Y Y,
\]
where $J_Y:Y\to Y^{**}$ is the canonical embedding.
The residuum operator is
$A^{\co}(y^{**}+J_YY)=A^{**}y^{**}+J_ZZ$.
The weak Calkin algebra is $\Qw(Y)=\B(Y)/\W(Y)$, with $\norm{[T]}=\norm{T}_w$.
A normed algebra is incompressible if every bounded injective homomorphism into a normed algebra is bounded below; it is uniformly incompressible if a single positive lower bound works for all contractive injective homomorphisms.
Homomorphisms need not preserve an identity, and algebra norms are submultiplicative.

\begin{maintheorem}[The weak essential norm]\label{c:thm:mainnorm}
For every $T\in\B(X)$,
\begin{equation}\label{c:eq:mainnorm}
 \norm{T^{\co}}\leq\norm{T}_w\leq2\norm{T^{\co}}.
\end{equation}
Thus $[T]\mapsto T^{\co}$ is an injective homomorphism with closed range and inverse norm at most $2$ on its image.
\end{maintheorem}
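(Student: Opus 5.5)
The lower bound $\norm{T^{\co}}\le\norm{T}_w$ is the easy half. If $W\in\W(X)$, then $W^{**}X^{**}\subseteq J_XX$, so $W^{\co}=0$. Also $(A+B)^{\co}=A^{\co}+B^{\co}$ and $\norm{A^{\co}}\le\norm{A^{**}}=\norm{A}$. Hence
\[
\norm{T^{\co}}=\norm{(T-W)^{\co}}\le\norm{T-W}
\]
for every such $W$, and taking the infimum gives the bound. The map $T\mapsto T^{\co}$ is multiplicative, because $(ST)^{**}=S^{**}T^{**}$. It kills $\W(X)$, so it factors through $\Qw(X)$. Once the upper estimate is proved, that estimate gives injectivity, closed range and the bound $2$ on the inverse. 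All the work is therefore in $\norm{T}_w\le2\norm{T^{\co}}$.

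For the upper bound I would use Theorem~\ref{thm:main}. It tells us that $X$ is isomorphic to a subspace of $L^1$, so $X$ has a weakly compactly generated dual structure of $L^1$ type. The key tool is the classical characterization of weak compactness in subspaces of $L^1$. For operators into such spaces there are quantitative versions, in the style of Dunford--Pettis uniform integrability and Kadec--Pełczyński, which measure $\norm{T}_w$ by a measure of non-uniform-integrability of $T(B_X)$. I expect the cleanest route, however, to be the fact that $X=E/R$ is itself an $L^1$-type quotient. The dual $X^*=R^\perp\subseteq L^\infty$ is a weak$^*$-closed subspace. Moreover $X^{**}=L^{1\,**}/R^{\perp\perp}$, and since $R$ is reflexive, $R^{\perp\perp}=R$.

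Here $L^{1\,**}=L^1\oplus_1 L^1_s$, where the second summand consists of the singular (purely finitely additive) functionals. This is an $L$-decomposition with contractive projection $P$. Because $R\subseteq L^1$, the decomposition passes to the quotient: $X^{**}=J_XX\oplus_1 S$, where $S$ is the image of the singular part. So $X$ is $L$-embedded, and the projection onto $J_XX$ has norm one. For $L$-embedded spaces there is a known device (cf.\ Harmand--Werner--Werner, and results of González on residuum operators) for building weakly compact approximants. It gives $\norm{T}_w\le 2\norm{T^{\co}}$ via
\[
\norm{T}_w\le\norm{T^{**}|_S}+\ldots,
\]
where one writes $T^{**}y=J_X T y+(\text{singular})$ and then produces a weakly compact $W$ with $\norm{T-W}\le2\norm{(\Id-P)T^{**}|_{\ldots}}$.

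Concretely, I would take $W$ to be a perturbation of $T$ by a "uniformly integrable truncation". Choose sets $A_n$ with $\Prob(A_n)\to0$ witnessing the non-uniform integrability of $T(B_X)$. By Kadec--Pełczyński, the supremum $\limsup_n\sup_{x}\norm{\one_{A_n}Tx}$, measured in the quotient norm, equals the norm of the singular part of $T^{**}$. The operator $x\mapsto$ (the part of $Tx$ off the $A_n$) is weakly compact, which yields the bound. The factor $2$ comes from passing between the quotient norm $\norm{T^{\co}}$ and the norm of $(\Id-P)T^{**}$ on all of $X^{**}$. Since $\norm{(\Id-P)T^{**}y}\le\norm{T^{\co}}\,\norm{y}$ only up to an additional term from $T^{**}$ mapping $J_XX$ back into $J_XX$, the $L$-decomposition gives a factor $1+1$.

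The main obstacle is this last step. The truncation $\one_{A^c}$ does not act on $X=E/R$, since it does not preserve $R$. It must instead be performed after embedding: compose $T$ with $J$ from Theorem~\ref{thm:main}, or lift $T$ to $L^1$. I would handle this by working with the $L$-projection $P$ on $X^{**}$ directly. The task is to show that the weakly compact operators are exactly those with $(\Id-P)T^{**}=0$, and that a near-best approximant is obtained from a net of weak$^*$-continuous approximations of $P$ by multiplication-type operators. Keeping the constant exactly $2$ there is the delicate point.
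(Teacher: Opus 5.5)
Your lower bound, the homomorphism property, and the reduction of injectivity and closed range to the upper estimate are correct and agree with the paper. The upper bound $\norm{T}_w\le2\norm{T^{\co}}$, however, is not proved.

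Your structural observation is sound. Since $R^{\perp\perp}=J_ER\subseteq J_EE$, the $L$-decomposition of $E^{**}$ descends and $X^{**}=J_XX\oplus_1S$. But with your $L$-projection $P$ this yields only the \emph{exact} identity $\norm{(\Id-P)T^{**}}=\norm{T^{\co}}$, so no factor $2$ appears there, contrary to your heuristic. Likewise, ``$W$ is weakly compact iff $(\Id-P)W^{**}=0$'' is just Gantmacher's criterion. Both facts measure how far $T$ is from being weakly compact. Neither produces a weakly compact operator $W:X\to X$ close to $T$, which is what $\norm{T}_w$ requires.

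The ``known device'' for $L$-embedded spaces is not something you can invoke. Even for $L^1$ itself, $\norm{A}_w=\norm{A^{\co}}$ rests on Weis's kernel description of weakly compact operators, and you offer no substitute on an abstract $L$-embedded space.

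Your concrete step also fails. Even on $L^1$, $f\mapsto\one_{A^c}f$ is a band projection onto an infinite-dimensional $L^1$-space whenever $\Prob(A^c)>0$. So ``the part of $Tx$ off $A_n$'' is not a weakly compact operator in general. Linear weakly compact approximants on $L^1$ come from truncating kernels, not from deleting small sets. Such projections do not act on $E/R$ at all. Composing with the embedding of Theorem~\ref{thm:main} does not help either, since a weakly compact approximant of $JT$ need not take values in $J(X)$; the paper does not use that theorem here.

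The missing idea is a correction step on the $L^1$ side, and that is where the $2$ actually comes from.
\begin{itemize}
\item Lift $Tq$ to $A_T:E\to E$. Since $q^{\co}$ is a surjective isometry, the $AL$-space identity $\norm{A_T}_w=\norm{A_T^{\co}}=\norm{T^{\co}}$ gives a weakly compact $F:E\to X$ with $\norm{Tq-F}\le\norm{T^{\co}}+\varepsilon$.
\item This $F$ need not vanish on $R$, so it does not descend to $X$. One needs an approximable $V:E\to X$ with $V|_R=(Tq-F)|_R$ and $\norm{V}\le\norm{Tq-F}+\varepsilon$.
\item The paper builds $V$ as follows. It lifts $F$ to a weakly compact operator into $L^1$ and approximates that by bounded kernels (Weis). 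It extends each bounded-kernel restriction to $R$ exactly, via a Khintchine-based Bochner lifting of its Rademacher coefficient field. It then recovers the norm using the conditional expectations $E_N$ and the shrinking of the Rademacher coordinates.
\item Then $F+V$ vanishes on $R$, hence $F+V=Wq$ with $W$ weakly compact, and
\[
 \norm{T-W}=\norm{(Tq-F)-V}\leq2\norm{Tq-F}+\varepsilon\leq2\norm{T^{\co}}+3\varepsilon.
\]
\end{itemize}
Your outline contains no analogue of this exact extension with norm control, so it never produces the approximant $W$.
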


\begin{maintheorem}[Uniform incompressibility]\label{c:thm:mainalgebra}
The algebra $\Qw(X)$ is uniformly incompressible.
Let $C$ be the constant in Theorem~\ref{thm:main}.
For every normed algebra $B$ and every bounded injective homomorphism $\Phi:\Qw(X)\to B$,
\begin{equation}\label{c:eq:mainalgebra}
 \norm{\Phi(a)}\geq
 \frac{\norm{a}}{40C\norm{\Phi}^2}
 \qquad(a\in\Qw(X)).
\end{equation}
\end{maintheorem}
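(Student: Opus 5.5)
Theorem~\ref{c:thm:mainalgebra} follows the standard route to uniform incompressibility: we exhibit, inside $\Qw(X)$, a uniformly bounded family of ``rank-one-like'' elements that factor every class through a fixed norming structure. For instance, we seek, for each $a=[T]$ with $\norm{a}=1$, elements $u,v\in\Qw(X)$ with $\norm{u},\norm{v}$ bounded by absolute constants and $uav=p$, where $p$ is a fixed nonzero element (say an idempotent, or an element whose image is controlled). Then $\norm{\Phi(p)}\le\norm{\Phi}^2\norm{u}\norm{v}\norm{\Phi(a)}$, i.e.\ $\norm{\Phi(a)}\ge \norm{\Phi(p)}/(\norm{\Phi}^2\norm u\norm v)$. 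Since this lower bound involves $\norm{\Phi(p)}$, which is not obviously bounded below, the better variant is to show that $p$ can be chosen \emph{depending on $a$} but with $p^2=p$ and $\norm{p}$ bounded by an absolute constant, and $p\neq 0$. Then $\norm{\Phi(p)}\ge \norm{\Phi(p)}^2/\norm{\Phi(p)}$ together with $\Phi(p)=\Phi(p)^2$ and submultiplicativity gives $\norm{\Phi(p)}\ge 1$ because $\Phi(p)\neq0$ is idempotent. This yields $\norm{\Phi(a)}\ge 1/(K\norm{\Phi}^2)$, matching the shape $40C\norm{\Phi}^2$.

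The construction of $u,v$ proceeds as follows. By Theorem~\ref{c:thm:mainnorm}, $\norm{T^{\co}}\ge\norm{T}_w/2=1/2$, so we pick $\xi\in X^{\co}$ of norm $1$ with $\norm{T^{\co}\xi}\ge1/4$. Passing to sequences, this gives a bounded sequence $(x_n)$ in $X$ without weakly convergent subsequences whose images $(Tx_n)$ are uniformly far from weakly Cauchy behaviour. Next we use Theorem~\ref{thm:main}: $X$ embeds into $L^1$ with distortion at most $10C$ via $J$. A non-weakly-compact bounded set in a subspace of $L^1$ fails uniform integrability, so by Kadec--Pe\l czy\'nski a subsequence of $(Tx_n)$ (after differences) is equivalent to the unit vector basis of $\ell_1$ and spans a complemented copy of $\ell_1$ in $X$. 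The constants are controlled by $10C$ and an absolute factor, since the disjointly supported perturbations in $L^1$ are $1$-complemented there, and one pulls back via $J$. Likewise $(x_n)$ may be taken $\ell_1$-basic. We then define $v:X\to X$ by mapping into $\spanop\{x_n\}$ through a projection onto a complemented $\ell_1$ piece of $X$ itself (we need a fixed complemented $\ell_1$, e.g.\ coming from $q$ of disjoint blocks, which $q$ maps isomorphically), and $u$ by projecting onto $\overline{\spanop}\{Tx_n\}$ and returning $Tx_n\mapsto e_n$. Then $uTv$ equals the identity on a complemented $\ell_1$, modulo weakly compact operators, and the resulting $p$ is an idempotent that is not weakly compact, hence $p\ne0$ in $\Qw(X)$.

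The main obstacle is the complementation of the $\ell_1$-sequence $(Tx_n)$ \emph{inside $X$}, rather than inside $L^1$, with constants uniform in $T$. The plan is to use the averaged projection $P$ onto the disjoint blocks in $L^1(\Prob^{\otimes3})\oplus\mathbb R$, and then to compose with a left inverse. That left inverse does not exist globally on $L^1$, so we instead use the Dunford--Pettis property of $X$ (a quotient of $L^1$ by a reflexive space, hence DPP) together with the fact that $\ell_1$-sequences in a space whose bidual/co-structure is controlled by Theorem~\ref{c:thm:mainnorm} admit functionals $x_n^*\in X^*$ biorthogonal up to $1/(40C)$. Rosenthal's $\ell_1$ disjointification would give these functionals, producing a bounded operator $X\to\ell_1$ directly. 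Collecting constants, $\tfrac12$ from Theorem~\ref{c:thm:mainnorm}, $2C$ and $5$ from \eqref{eq:main-J-bounds}, and a factor $2$ from perturbation, gives the stated bound of $40C$.
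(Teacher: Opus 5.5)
\textbf{Verdict: genuine gap.} Your skeleton matches the paper's. You use a fixed idempotent $e=[jp]$ built from a norm-one complemented copy of $\ell_1$ in $X$, which is the paper's Lemma~\ref{c:lem:ellone}. You factor it as $e=[ju][T][vp]$ with $\norm{u}\norm{v}$ controlled by $1/\norm{T}_w$, and you conclude with $\norm{\Phi(e)}\geq1$. That last step holds because $\Phi(e)$ is a nonzero idempotent. This already works for a \emph{fixed} $e$, so your detour through an idempotent depending on $a$ is unnecessary.

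The gap is in the one step that carries all the content: producing $u,v$ with $\norm{u}\norm{v}\leq 4\kappa/\norm{T}_w$, where $\kappa=10C$. Your sketch of this step is not quantitative anywhere.
\begin{enumerate}
\item You assert, but do not prove, the passage from $\xi\in X^{\co}$ with $\norm{T^{\co}\xi}$ large to a sequence $(x_n)\subseteq B_X$ whose images $JTx_n$ fail uniform integrability by a definite amount.
\item The qualitative Kadec--Pe\l czy\'nski theorem gives an $\ell_1$-subsequence with complemented span. However, its lower $\ell_1$-constant and projection norm depend on the sequence. A bound uniform in $T$ needs them controlled by the co-norm alone. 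The exact constant $40C$ needs the sharp version: lower constant essentially equal to $\norm{(JT)^{\co}}$ and projection norm essentially $1$. That sharp statement is precisely the Acuaviva--Nasseri theorem (Theorem~\ref{c:thm:ALinput}).
\item Your constant count (``$\tfrac12$, $2C\cdot5$, a factor $2$ from perturbation'') is not derived from any estimate. Choosing $\norm{T^{\co}\xi}\geq1/4$ and then passing through $J^{\co}$ loses another factor $2$ (Lemma~\ref{c:lem:coembedding}). That leaves $1/8$ before any Kadec--Pe\l czy\'nski loss.
\end{enumerate}

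Your ``main obstacle'' paragraph targets the wrong difficulty. If the span of $(JTx_n)$ is complemented in $L^1$, restricting that projection to $J(X)$ immediately gives complementation in $X$. The Dunford--Pettis property plays no role, and the ``almost biorthogonal functionals'' claim is unjustified. The real difficulty arises when the complemented $\ell_1$ is spanned by disjoint pieces that do not lie in $J(X)$. Then you must control the cross terms well enough for a perturbation argument to give a left inverse of the right norm.

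The paper avoids all of this as follows:
\begin{enumerate}
\item It precomposes with $q$, so that $JTq:E\to L^1(0,1)$ is an operator between $AL$-spaces.
\item It computes $\norm{JTq}_w=\norm{J^{\co}T^{\co}q^{\co}}\geq\tfrac12\norm{T^{\co}}\geq\tfrac14\norm{T}_w$. This uses Corollary~\ref{c:cor:ALco}, Lemma~\ref{c:lem:coembedding}, the isometry $q^{\co}$, and Theorem~\ref{c:thm:mainnorm}.
\item It applies Theorem~\ref{c:thm:ALinput} to obtain $UJTqV=\Id_{\ell_1}$, with $\norm{U}\norm{V}$ arbitrarily close to $1/\norm{JTq}_w$.
\item It sets $u=UJ$ and $v=qV$, so that $\norm{u}\norm{v}\leq\kappa\norm{U}\norm{V}$.
\end{enumerate}
To repair your route you would have to prove that sharp factorization yourself.
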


The proof is organized into two sections.
Section~\ref{sec:embedding} proves Theorem~\ref{thm:main}.
A Walsh multiplier computation yields a dimension-free $L^{5/4}$ estimate.
The probabilistic input is \emph{hypercontractivity}: a suitable averaging operator sends $L^p$ into $L^q$, for $q>p$, with norm at most one.
Here the noise operators $T_\Theta$ defined in \eqref{eq:noise-definition} satisfy
$\norm{T_\Theta f}_2\leq\norm{f}_p$ when $1<p\leq2$ and $0\leq\Theta\leq\sqrt{p-1}$.
This stronger integrability after averaging controls the simultaneous occurrence of large values at two correlated points.
A truncation argument then gives the $L^1$ quotient estimate.
All probabilistic inputs used in this passage are formulated below; the hypercontractive input is also proved.

Section~\ref{sec:algebra} proves Theorems~\ref{c:thm:mainnorm} and~\ref{c:thm:mainalgebra}.
First, a Rademacher restriction with a weakly compact extension has an exact approximable extension, whose norm can be made arbitrarily close to that of any prescribed bounded extension.
This corrects weakly compact approximations so that they descend to the quotient and gives Theorem~\ref{c:thm:mainnorm}.
Theorem~\ref{thm:main} then allows the quantitative $AL$-factorization theorem of Acuaviva--Nasseri \cite[Theorem~1.4]{AN} to be applied to $DTq$ for a suitable embedding $D$.
A fixed nonzero idempotent converts these factorizations into Theorem~\ref{c:thm:mainalgebra}.
The same argument gives a retraction-based extension to larger $L^1$-spaces.

\section{The Rademacher quotient as a subspace of \texorpdfstring{$L^1$}{L1}}\label{sec:embedding}
\subsection{Probabilistic tools}\label{subsec:probability-tools}

All probability measures below have total mass one. We write $\E Z\coloneqq\int Z\,dP$ and $\PP(A)\coloneqq P(A)$. We use the standard integration theorems and record the probabilistic tools needed in the proof.

\begin{proposition}[Conditional expectation]\label{prop:integration-tools}
Let $(S,\Sigma,P)$ be a probability space and let $\mathcal G\subseteq\Sigma$ be a sub-$\sigma$-algebra.
For $h\in L^1(P)$, there is a unique $\mathcal G$-measurable element of $L^1(P)$, denoted by $\E[h\mid\mathcal G]$, such that
\[
 \int_A\E[h\mid\mathcal G]\,dP=\int_Ah\,dP
 \qquad(A\in\mathcal G).
\]
Uniqueness is understood modulo null sets. Conditional expectation is linear, positive, preserves integrals, and fixes $\mathcal G$-measurable functions.
For $h\in L^p(P)$, $1\leq p<\infty$, conditional Jensen's inequality gives
\[
 |\E[h\mid\mathcal G]|^p\leq\E[|h|^p\mid\mathcal G]
 \quad\text{almost everywhere}.
\]
Consequently $\norm{\E[h\mid\mathcal G]}_p\leq\norm{h}_p$ for $1\leq p\leq\infty$.
On a product probability space, conditioning on one factor amounts to integrating over the other factor.
\end{proposition}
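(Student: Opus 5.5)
The plan is to prove the statement in three stages: existence and uniqueness via the Radon--Nikodym theorem, the structural properties directly from the defining identity, and the Jensen and product-space claims by standard convex-analysis and Fubini arguments.

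\emph{Existence and uniqueness.} For $h\geq0$ in $L^1(P)$, define the measure $\nu(A)\coloneqq\int_A h\,dP$ on $\mathcal G$. It is finite and absolutely continuous with respect to $P|_{\mathcal G}$. The Radon--Nikodym theorem on $(S,\mathcal G,P|_{\mathcal G})$ then gives a $\mathcal G$-measurable density, which we take as $\E[h\mid\mathcal G]$. For general $h$, write $h=h^+-h^-$ and use linearity. For uniqueness, suppose two $\mathcal G$-measurable integrable functions $g_1,g_2$ have the same integrals over every $A\in\mathcal G$. Testing on $A=\{g_1>g_2\}$ and on $A=\{g_1<g_2\}$, both of which lie in $\mathcal G$, shows $g_1=g_2$ almost everywhere.

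\emph{Structural properties.} Each of these follows from uniqueness by checking the defining identity.
\begin{itemize}
\item \emph{Linearity:} the defining identity is linear in $h$.
\item \emph{Positivity:} if $h\geq0$, test on $A=\{\E[h\mid\mathcal G]<0\}$, which is in $\mathcal G$.
\item \emph{Preservation of integrals:} take $A=S$.
\item \emph{Fixing $\mathcal G$-measurable functions:} such an $h$ itself satisfies the defining identity.
\end{itemize}
Monotonicity follows from positivity and linearity.

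\emph{Conditional Jensen.} For $\varphi(t)=|t|^p$, $p\geq1$, which is convex, write $\varphi$ as a supremum of countably many affine functions $a_nt+b_n$ (rational supporting lines suffice). For each $n$, monotonicity and linearity give
\[
a_n\E[h\mid\mathcal G]+b_n\leq\E[\varphi(h)\mid\mathcal G]\quad\text{a.e.}
\]
Because there are only countably many $n$, the exceptional null sets can be united into a single null set. Taking the supremum over $n$ gives the pointwise inequality almost everywhere. Integrating it and using preservation of integrals yields $\norm{\E[h\mid\mathcal G]}_p^p\leq\norm{h}_p^p$.

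For $p=\infty$, apply monotonicity to $-\norm{h}_\infty\leq h\leq\norm{h}_\infty$ and use that constants are fixed. Note that $|h|^p\in L^1(P)$, so its conditional expectation is defined.

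\emph{Product spaces.} Let $S=S_1\times S_2$, $P=P_1\otimes P_2$, and $\mathcal G=\Sigma_1\otimes\{\emptyset,S_2\}$. Define
\[
g(s_1)\coloneqq\int h(s_1,s_2)\,dP_2(s_2).
\]
By Fubini--Tonelli, $g$ is defined almost everywhere, $\Sigma_1$-measurable, and integrable. For $A=A_1\times S_2$, Fubini gives $\int_A g\,dP=\int_A h\,dP$. By uniqueness, $g=\E[h\mid\mathcal G]$.

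The only delicate point is the countable-supremum step in conditional Jensen, where the almost-everywhere exceptional sets must be controlled. Using countably many rational supporting lines handles this; everything else is routine.
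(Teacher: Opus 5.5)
Your proof is correct and follows the standard textbook route that the paper does not write out but only cites (Albiac--Kalton for existence and contractivity, Durrett for conditional Jensen): Radon--Nikodym on $(S,\mathcal G,P|_{\mathcal G})$ gives existence, testing on $\{g_1>g_2\}$ gives uniqueness, and conditional Jensen follows from a countable family of affine minorants whose supremum is taken almost everywhere, which is exactly Durrett's argument. The paper gives neither a proof nor a reference for the product-space clause, and your Fubini-plus-uniqueness argument handles it correctly.
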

For the definition and contractivity of conditional expectation, see \cite[Definition~6.1.1 and Lemma~6.1.2]{AK16}; for conditional Jensen's inequality, see \cite[Theorem~4.1.10]{Dur19}.

\begin{lemma}[The Bernoulli product law]\label{lem:Bernoulli-law}
There is a probability measure $\Prob$ on $\Delta=\{-1,1\}^{\mathbb N}$, with its product $\sigma$-algebra, such that
\[
 \Prob\{x:x_j=\varepsilon_j\ (j\in F)\}=2^{-|F|}
 \quad(F\subset\mathbb N\text{ finite},\ \varepsilon_j\in\{-1,1\}).
\]
Its coordinate functions are independent fair signs. More generally, measurable functions depending on disjoint coordinate sets are independent.
Explicitly, if $g:\{-1,1\}^n\to\mathbb R$ and $f(x)\coloneqq g(x_1,\ldots,x_n)$, then
\[
 \int_\Delta f(x)\,d\Prob(x)
 =2^{-n}\sum_{\varepsilon\in\{-1,1\}^n}g(\varepsilon).
\]
This is the Bernoulli instance of the product-measure construction; see \cite[Theorem~A.3.1, Example~A.3.2, and Theorems~2.1.11--2.1.13]{Dur19}.
\end{lemma}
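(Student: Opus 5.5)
The lemma to prove is the Bernoulli product law (Lemma~\ref{lem:Bernoulli-law}). Since the paper cites it as standard, I would prove it by invoking the Kolmogorov/product-measure construction and then checking each claim.

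\textbf{Existence.} Let $\mu\coloneqq\tfrac12(\delta_{-1}+\delta_1)$ on $\{-1,1\}$. The infinite product theorem for probability measures (the cited \cite[Theorem~A.3.1]{Dur19}) gives a unique probability $\Prob=\bigotimes_{j}\mu$ on the product $\sigma$-algebra of $\Delta$ with
\[
 \Prob\Bigl(\prod_{j\leq n}A_j\times\prod_{j>n}\{-1,1\}\Bigr)=\prod_{j\leq n}\mu(A_j)
\]
for all $n$ and all $A_j\subseteq\{-1,1\}$.

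An alternative route, avoiding the abstract theorem, is to push forward Lebesgue measure on $[0,1)$ under the binary-digit map $t\mapsto(1-2d_j(t))_j$. Cylinder sets then pull back to finite unions of dyadic intervals. The only subtlety here is measurability of the map, which holds because each digit $d_j$ is a step function.

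\textbf{The cylinder formula.} For a finite $F$, choose $n\geq\max F$. Take $A_j=\{\varepsilon_j\}$ for $j\in F$ and $A_j=\{-1,1\}$ for $j\in\{1,\dots,n\}\setminus F$. The product formula then gives $2^{-|F|}$.

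\textbf{Independence.} From the cylinder formula with $F=\{j_1,\dots,j_k\}$, we get $\Prob(x_{j_i}=\varepsilon_i\ \forall i)=\prod_i\Prob(x_{j_i}=\varepsilon_i)$. Hence the coordinates are independent fair signs.

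For disjoint index sets $I,I'$, the cylinder sets depending only on coordinates in $I$ form a $\pi$-system generating $\sigma(x_j:j\in I)$, and likewise for $I'$. On pairs of such cylinders, the product formula gives $\Prob(A\cap B)=\Prob(A)\Prob(B)$, since their defining coordinate sets are disjoint. Dynkin's $\pi$-$\lambda$ theorem, applied twice, extends this factorization to the generated $\sigma$-algebras. Measurable functions depending on coordinates in $I$ and in $I'$ are then independent.

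\textbf{The integral formula.} Write $f=g(x_1,\dots,x_n)$ as the finite sum
\[
 f=\sum_{\varepsilon\in\{-1,1\}^n}g(\varepsilon)\,\one_{\{x_j=\varepsilon_j,\ j\leq n\}}.
\]
Integrate term by term, using the cylinder formula with $F=\{1,\dots,n\}$, to obtain $2^{-n}\sum_\varepsilon g(\varepsilon)$.

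\textbf{Main obstacle.} The only nontrivial step is the existence (countable additivity) of the product measure, and I would cite the product-measure theorem for it. The rest is the routine $\pi$-$\lambda$ bookkeeping in the independence step.
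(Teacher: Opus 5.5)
Your proposal is correct and follows essentially the paper's argument. The paper's main route is your ``alternative route'': it pushes Lebesgue measure on $[0,1)$ forward under the binary-digit map, so that countable additivity comes from Lebesgue measure rather than from the abstract product-measure theorem, and it reads off the cylinder formula from dyadic intervals by summing over unspecified signs. Independence is then obtained as you do, first on cylinders and then on the generated $\sigma$-algebras by a monotone-class ($\pi$--$\lambda$) argument.
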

\begin{proof}
Push Lebesgue measure on $[0,1)$ forward under the binary-digit map $t\mapsto((-1)^{\beta_j(t)})_j$. Dyadic intervals show that the first $n$ signs have the uniform distribution; summing over unspecified signs gives the displayed formula. Independence follows first for cylinders and then for the generated $\sigma$-algebras by the monotone-class theorem.
\end{proof}

\begin{lemma}[Distribution formulas]\label{lem:distribution-tools}
Let $Z\geq0$ be integrable and put $a(t)\coloneqq\PP(Z>t)$.
Markov's inequality and the layer-cake formula \cite[Theorem~1.6.4 and Exercise~1.7.2]{Dur19} give, for $t,M>0$,
\[
 a(t)\leq\frac{\E Z}{t},\qquad
 \E Z=\int_0^\infty a(t)\,dt,\qquad
 \E[Z\mathbf1_{\{Z>M\}}]=Ma(M)+\int_M^\infty a(t)\,dt.
\]
\end{lemma}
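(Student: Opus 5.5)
The three assertions of Lemma~\ref{lem:distribution-tools} are standard facts. All three follow from Tonelli's theorem applied to the identity $Z=\int_0^\infty\mathbf 1_{\{Z>t\}}\,dt$, valid pointwise for $Z\geq0$, together with the monotonicity of the integral.

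For Markov's inequality, I will use the pointwise bound $t\mathbf 1_{\{Z>t\}}\leq Z$, valid because $Z\geq0$. Integrating against $P$ gives $t\,a(t)\leq\E Z$, and dividing by $t>0$ gives $a(t)\leq\E Z/t$.

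For the layer-cake formula, I will note that the function $(s,t)\mapsto\mathbf 1_{\{Z(s)>t\}}$ is jointly measurable on $S\times(0,\infty)$, since it is the indicator of the set $\{(s,t):Z(s)-t>0\}$. It is also nonnegative, so Tonelli's theorem allows interchanging the order of integration:
\[
 \E Z=\E\int_0^\infty\mathbf 1_{\{Z>t\}}\,dt=\int_0^\infty\PP(Z>t)\,dt=\int_0^\infty a(t)\,dt.
\]

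For the truncated formula, I will apply the same identity to the nonnegative variable $Y\coloneqq Z\mathbf 1_{\{Z>M\}}$. For $t<M$ the event $\{Y>t\}$ equals $\{Z>M\}$, and for $t\geq M$ it equals $\{Z>t\}$. Hence
\[
 \PP(Y>t)=a(M)\ \text{for }t<M,\qquad \PP(Y>t)=a(t)\ \text{for }t\geq M.
\]
Splitting $\int_0^\infty\PP(Y>t)\,dt$ at $t=M$ then gives $\E Y=Ma(M)+\int_M^\infty a(t)\,dt$.

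No step is a genuine obstacle. The only points that need care are the joint measurability required for Tonelli's theorem and the bookkeeping of the case split at $t=M$ in the truncated formula.
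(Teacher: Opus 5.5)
Your proof is correct and follows the paper's: Markov's inequality comes from $t\mathbf 1_{\{Z>t\}}\leq Z$, and the other two formulas come from Tonelli's theorem applied to the pointwise layer-cake identity. The only cosmetic difference is that the paper integrates the pointwise identity $Z\mathbf 1_{\{Z>M\}}=M\mathbf 1_{\{Z>M\}}+\int_M^\infty\mathbf 1_{\{Z>t\}}\,dt$ directly, whereas you compute the tail function of $Y=Z\mathbf 1_{\{Z>M\}}$ and split its layer-cake integral at $M$, which amounts to the same thing.
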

\begin{proof}
The first assertion follows from $t\mathbf1_{\{Z>t\}}\leq Z$. For the others, integrate the pointwise identities
\[
 Z=\int_0^\infty\mathbf1_{\{Z>t\}}\,dt,\qquad
 Z\mathbf1_{\{Z>M\}}=M\mathbf1_{\{Z>M\}}+
 \int_M^\infty\mathbf1_{\{Z>t\}}\,dt
\]
using Tonelli's theorem. We also use the finite union bound in its pointwise form $\mathbf1_{\cup_j A_j}\leq\sum_j\mathbf1_{A_j}$.
\end{proof}

\begin{samepage}
For probability spaces $(S,\Sigma,P)$ and $(T,\mathcal T,Q)$, with variables $x\in S$ and $t\in T$, we write $L_x^p\coloneqq L^p(S,P)$ and $L_t^q\coloneqq L^q(T,Q)$.
For a measurable $h:S\times T\to\mathbb R$ and $1\leq p,q<\infty$, define
\begin{align*}
 \norm{h}_{L_t^q(L_x^p)}
 &\coloneqq\left(\int_T\left(\int_S|h(x,t)|^p\,dP(x)\right)^{q/p}\,dQ(t)\right)^{1/q},\\
 \norm{h}_{L_x^p(L_t^q)}
 &\coloneqq\left(\int_S\left(\int_T|h(x,t)|^q\,dQ(t)\right)^{p/q}\,dP(x)\right)^{1/p}.
\end{align*}
The subscripts identify the integration variables. In $L_t^q(L_x^p)$, one first takes the $L^p$ norm in $x$ for each fixed $t$, and then the $L^q$ norm of the resulting function of $t$.
The next inequality is the standard mixed-norm form of Minkowski's integral inequality; compare \cite[Chapter~III, Lemma~1]{Bon70} and \cite[Lemma~2]{Bec75}. We include its proof for the reader's convenience, together with the elementary interpolation estimates used later.

\end{samepage}

\begin{lemma}[Mixed norms and interpolation]\label{lem:mixed-interpolation}
For $1\leq p\leq q<\infty$ and measurable $h$ on a product of probability spaces,
\[
 \norm{h}_{L^q_t(L^p_x)}\leq\norm{h}_{L^p_x(L^q_t)}.
\]
On a probability space, $\norm{h}_p\leq\norm{h}_q$ for $p\leq q$. The further estimates used below are
\[
 \norm{h}_{5/4}\leq\norm{h}_\infty^{1/5}\norm{h}_1^{4/5},
 \qquad
 \norm{h}_2\leq\norm{h}_1^{1/3}\norm{h}_4^{2/3}.
\]
\end{lemma}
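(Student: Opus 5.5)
For the mixed-norm inequality, I would put $r\coloneqq q/p\geq1$ and $F(x,t)\coloneqq|h(x,t)|^p$. With these, the claim is equivalent to
\[
 \Bigl\|\int_S F(x,\cdot)\,dP(x)\Bigr\|_{L^r_t}\leq\int_S\norm{F(x,\cdot)}_{L^r_t}\,dP(x),
\]
after raising both sides to the power $1/p$. This is Minkowski's integral inequality in $L^r(Q)$. I would prove it by duality for nonnegative $F$. Let $G(t)\coloneqq\int_S F(x,t)\,dP(x)$. For $g\geq0$ with $\norm{g}_{r'}\leq1$, Tonelli and Hölder give
\[
 \int_T G\,g\,dQ=\int_S\int_T F(x,t)g(t)\,dQ(t)\,dP(x)\leq\int_S\norm{F(x,\cdot)}_{r}\,dP(x).
\]
Taking the supremum over such $g$ yields $\norm{G}_r$. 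This step is legitimate when $\norm{G}_r<\infty$. In general I would first truncate $F$ to $\min(F,N)\mathbf 1_{\{t\in T_N\}}$ and pass to the limit by monotone convergence. Alternatively, I can test directly with $g=G^{r-1}/\norm{G}_r^{r-1}$ applied to the truncations. The case $r=1$ is Tonelli alone. Measurability of the inner integrals also comes from Tonelli.

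The monotonicity $\norm{h}_p\leq\norm{h}_q$ for $p\leq q$ is Hölder with exponent $q/p$ against the constant $1$, using that $P$ has total mass one. Equivalently, it is Jensen for the convex map $s\mapsto s^{q/p}$.

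For the two interpolation estimates I would use pointwise bounds and Hölder.
\begin{itemize}
\item[(a)] Write $|h|^{5/4}=|h|^{1/4}|h|\leq\norm{h}_\infty^{1/4}|h|$ and integrate. This gives $\norm{h}_{5/4}^{5/4}\leq\norm{h}_\infty^{1/4}\norm{h}_1$. Raising to the power $4/5$ gives $\norm{h}_\infty^{1/5}\norm{h}_1^{4/5}$.
\item[(b)] Write $|h|^2=|h|^{2/3}|h|^{4/3}$ and apply Hölder with exponents $3/2$ and $3$. This gives
\[
 \int|h|^2\leq\Bigl(\int|h|\Bigr)^{2/3}\Bigl(\int|h|^4\Bigr)^{1/3}=\norm{h}_1^{2/3}\norm{h}_4^{4/3}.
\]
Taking square roots gives $\norm{h}_1^{1/3}\norm{h}_4^{2/3}$, which matches the exponent count $\tfrac12=\tfrac{\theta}{1}+\tfrac{1-\theta}{4}$ with $\theta=1/3$.
\end{itemize}
Infinite right-hand sides make these statements trivial.

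The only real obstacle is the first inequality's justification when the norms may be infinite, which is handled by the truncation and monotone convergence mentioned above. Everything else is a one-line application of Hölder.
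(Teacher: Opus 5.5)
Your proposal is correct and follows the paper's proof essentially step for step. You use the substitution $r=q/p$, $F=\lvert h\rvert^p$ to reduce the mixed-norm bound to Minkowski's integral inequality in $L^r_t$, H\"older against the constant function for monotonicity, and the same pointwise bound and the same H\"older split with exponents $3/2$ and $3$ for the two interpolation estimates. Your duality-plus-truncation proof of Minkowski's inequality is only extra detail for the step the paper quotes as the integral triangle inequality in $L^s_t$, not a different route.
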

\begin{proof}
Put $s=q/p\geq1$. The integral triangle inequality in $L^s_t$ gives
\[
 \norm{h}_{L^q_t(L^p_x)}^p
 =\left\lVert\int |h(x,\cdot)|^p\,dP(x)\right\rVert_{L^s_t}
 \leq\int\norm{|h(x,\cdot)|^p}_{L^s_t}\,dP(x)
 =\norm{h}_{L^p_x(L^q_t)}^p.
\]
The norm comparison follows from H\"older's inequality and $P(S)=1$. Integrating $|h|^{5/4}\leq\norm{h}_\infty^{1/4}|h|$ proves the first interpolation estimate. Applying H\"older with exponents $3/2$ and $3$ to $|h|^2=|h|^{2/3}|h|^{4/3}$ proves the second.
\end{proof}
\subsection{Finite cubes and the normal operator}\label{sec:finite}

For $n\geq1$, let
\[
 \Delta_n\coloneqq\{-1,1\}^n,\qquad
 \Prob_n(\{x\})\coloneqq2^{-n},\qquad
 \A_n\coloneqq\spanop\{\one,r_1,\ldots,r_n\},\qquad Q_n\coloneqq\Prob_n^{\otimes3}.
\]
Every norm on $\Delta_n$ uses the uniform probability measure $\Prob_n$.
A norm of $D_nf$ is taken on $(\Delta_n^3,Q_n)$.
Probabilities and expectations involving the maps $U_i$ below are taken with respect to $Q_n$; we write $\E_{Q_n}Z\coloneqq\int Z\,dQ_n$.

Let $x,y,\sigma$ be independent with distribution $\Prob_n$. Define $z,w$ by \eqref{eq:swaps} for $1\leq j\leq n$, and define $D_n$ by the finite version of \eqref{eq:D-infinite}. It is convenient to put
\[
 U_0\coloneqq x,\quad U_1\coloneqq y,\quad U_2\coloneqq z,\quad U_3\coloneqq w,
 \qquad (s_0,s_1,s_2,s_3)\coloneqq(1,1,-1,-1).
\]
Thus $D_nf=\sum_{i=0}^3s_i f(U_i)$.

\Needspace{18\baselineskip}
\begin{samepage}
\subsubsection{Marginals, conditional expectations, and noise}

\begin{lemma}[Laws of the four points]\label{lem:coupling}
Let $U_i:\Delta_n^3\to\Delta_n$, $0\leq i\leq3$, be the maps defined above on the probability space $(\Delta_n^3,Q_n)$.
For every $u,v\in\Delta_n$, the following identities hold:
\begin{enumerate}
\item Each marginal is uniform:
\[
 Q_n(U_i=u)=2^{-n}\qquad(0\leq i\leq3).
\]
\item The pairs $(U_0,U_1)$ and $(U_2,U_3)$ are independent:
\[
 Q_n(U_i=u,U_j=v)=2^{-2n}
 \qquad((i,j)\in\{(0,1),(2,3)\}).
\]
\item For $(i,j)\in\{(0,2),(0,3),(1,2),(1,3)\}$,
\[
 Q_n(U_j=v\mid U_i=u)
 =\prod_{k=1}^n\left(\tfrac34\mathbf1_{\{v_k=u_k\}}+
                     \tfrac14\mathbf1_{\{v_k=-u_k\}}\right).
\]
Thus, conditionally on $U_i=u$, the coordinates of $U_j$ are independent, with $Q_n((U_j)_k=u_k\mid U_i=u)=3/4$.
The same formula holds with $i$ and $j$ interchanged.
\end{enumerate}
\end{lemma}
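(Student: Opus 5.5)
The plan is to reduce everything to a coordinatewise computation, using that $x,y,\sigma$ are independent uniform on $\Delta_n$ under $Q_n$. This makes the triples $(x_k,y_k,\sigma_k)$, $1\le k\le n$, independent and uniform on $\{-1,1\}^3$. By \eqref{eq:swaps}, each coordinate of every $U_i$ is a function of the $k$-th triple alone: $(U_2)_k=x_k$ if $\sigma_k=1$ and $y_k$ if $\sigma_k=-1$, and symmetrically for $U_3$. So for any pair $(i,j)$ the coordinate pairs $((U_i)_k,(U_j)_k)$, $k=1,\dots,n$, are independent. Each joint law $Q_n(U_i=u,U_j=v)$ is therefore the product of the single-coordinate probabilities $\PP((U_i)_k=u_k,(U_j)_k=v_k)$, and these are computed by enumerating the $8$ equally likely values of $(x_k,y_k,\sigma_k)$. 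The independence of coordinate blocks is supplied by Lemma~\ref{lem:Bernoulli-law}, or directly by counting on the finite product.

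For (i), one coordinate suffices. $(U_0)_k=x_k$ and $(U_1)_k=y_k$ are fair signs. For $(U_2)_k$, condition on $\sigma_k$: in either case it equals one of $x_k,y_k$, and that coordinate is independent of $\sigma_k$, so it is a fair sign. The same holds for $U_3$. Multiplying over $k$ gives $2^{-n}$.

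For (ii), the pair $(U_0,U_1)=(x,y)$ is uniform on $\Delta_n^2$ by construction. For $(U_2,U_3)$, note that coordinatewise $((U_2)_k,(U_3)_k)$ equals $(x_k,y_k)$ when $\sigma_k=1$ and $(y_k,x_k)$ when $\sigma_k=-1$. Conditionally on $\sigma_k$ this is a bijective image of the uniform pair $(x_k,y_k)$, so it is uniform on $\{-1,1\}^2$ and independent of $\sigma_k$. The product over $k$ gives $2^{-2n}$.

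For (iii), I will first treat $(i,j)=(0,2)$. The event $(U_2)_k=x_k$ occurs when $\sigma_k=1$, or when $\sigma_k=-1$ and $y_k=x_k$. Its probability is $\tfrac12+\tfrac14=\tfrac34$, and this holds conditionally on each value of $x_k$, because $y_k,\sigma_k$ are independent of $x_k$. Hence the joint single-coordinate law is $\tfrac12\cdot\tfrac34$ on the diagonal and $\tfrac12\cdot\tfrac14$ off it. Dividing by $Q_n(U_0=u)=2^{-n}$ from (i) and using the coordinatewise independence yields the displayed product formula.

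The other three pairs follow from symmetry rather than new computation. Interchanging $x\leftrightarrow y$ swaps $U_0\leftrightarrow U_1$ and $U_2\leftrightarrow U_3$; replacing $\sigma$ by $-\sigma$ swaps $U_2\leftrightarrow U_3$ and fixes $U_0,U_1$. Both maps preserve $Q_n$, so the laws of $(U_0,U_3)$, $(U_1,U_2)$ and $(U_1,U_3)$ coincide with that of $(U_0,U_2)$. The statement with $i$ and $j$ interchanged follows because the single-coordinate joint law is symmetric and both marginals are uniform. The conditional independence of the coordinates is immediate from the product form. I expect no real obstacle. The only point needing care is the justification that the coordinate pairs are independent across $k$, which rests on the product structure of $Q_n$ and the coordinatewise definition \eqref{eq:swaps}.
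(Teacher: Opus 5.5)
Your proposal is correct and follows essentially the paper's own argument. You reduce to independent coordinates, use the fixed-$\sigma$ swap that preserves $\Prob_n\otimes\Prob_n$ for (ii), and compute $\tfrac12+\tfrac12\cdot\tfrac12=\tfrac34$ for one pair in (iii); the symmetries $x\leftrightarrow y$ and $z\leftrightarrow w$ (which you realize concretely as $\sigma\mapsto-\sigma$) then give the other pairs, and symmetry of the product handles the interchanged formula.
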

\end{samepage}
\begin{proof}
For a fixed $\sigma$, the transformation $(x,y)\mapsto(z,w)$ permutes the two entries in each coordinate and preserves $\Prob_n\otimes\Prob_n$. This proves the assertion about $(z,w)$ and hence all marginal assertions. The independence of $(x,y)$ is part of the construction.

For example, conditionally on $x$, the variable $z_j$ equals $x_j$ if $\sigma_j=1$; if $\sigma_j=-1$, it is the independent fair sign $y_j$. Therefore
\[
 Q_n(z_j=x_j\mid x)=\tfrac12+\tfrac12\cdot\tfrac12=\tfrac34.
\]
The pairs $(y_j,\sigma_j)$ are independent over $j$, giving the product conditional law. Interchanging $x,y$ or $z,w$ proves the other three cases. The product on the right-hand side of (iii) is symmetric in the two points. Together with their equal uniform marginals, this proves the same conditional formula with the two points interchanged.
\end{proof}

For $0\leq\Theta\leq1$, define the probability measure
\[
 \nu_{n,\Theta}\coloneqq
 \bigotimes_{j=1}^n\left(\frac{1+\Theta}{2}\delta_1+
                         \frac{1-\Theta}{2}\delta_{-1}\right),
 \qquad u\odot\epsilon\coloneqq(u_1\epsilon_1,\ldots,u_n\epsilon_n).
\]
The noise operator on $\Delta_n$ is
\begin{equation}\label{eq:noise-definition}
 (T_\Theta f)(u)\coloneqq
 \int_{\Delta_n} f(u\odot\epsilon)\,d\nu_{n,\Theta}(\epsilon).
\end{equation}
Here $\epsilon$ is the integration variable, and $\nu_{n,\Theta}$ is used only in this averaging formula; the $L^p$ norms still use $\Prob_n$.
For every fixed $\epsilon$, the map $u\mapsto u\odot\epsilon$ permutes $\Delta_n$ and preserves $\Prob_n$.
Since $\nu_{n,\Theta}$ is a probability measure, \eqref{eq:noise-definition} gives
\[
 f\geq0\ \Longrightarrow\ (T_\Theta f)(u)\geq0,
 \qquad (T_\Theta\one)(u)=\int_{\Delta_n}1\,d\nu_{n,\Theta}=1.
\]
These are positivity and the identity $T_\Theta\one=\one$.
The permutation property and Fubini's theorem also give
\[
 \int_{\Delta_n}T_\Theta f\,d\Prob_n
 =\int_{\Delta_n}f\,d\Prob_n.
\]
Jensen's inequality gives
\[
 |T_\Theta f|^p\leq T_\Theta(|f|^p),\qquad
 \int_{\Delta_n}T_\Theta(|f|^p)\,d\Prob_n
 =\int_{\Delta_n}|f|^p\,d\Prob_n \quad(1\leq p<\infty).
\]
Thus $\norm{T_\Theta f}_p\leq\norm{f}_p$; the $p=\infty$ assertion follows directly from \eqref{eq:noise-definition}.
Define the mean projection by $P_0f\coloneqq(\int f\,d\Prob_n)\one$.
The identity $T_0=P_0$ follows because $\nu_{n,0}=\Prob_n$ and $\epsilon\mapsto u\odot\epsilon$ is a bijection:
\[
 (T_0f)(u)=2^{-n}\sum_{\epsilon\in\Delta_n}f(u\odot\epsilon)
 =2^{-n}\sum_{v\in\Delta_n}f(v)=(P_0f)(u).
\]
The identity $T_1=I$ follows because $\nu_{n,1}$ is concentrated at $(1,\ldots,1)$:
\[
 (T_1f)(u)=f(u\odot(1,\ldots,1))=f(u).
\]

For $S\subseteq\{1,\ldots,n\}$ define the Walsh function
\[
 w_S(u)\coloneqq\prod_{j\in S}u_j,\qquad w_\varnothing\coloneqq\one.
\]
The Walsh basis and the following noise diagonalization are classical; see \cite[Theorem~1.5 and Proposition~2.47]{ODonnell}.
We give the calculation explicitly.
\begin{samepage}
For $S,B\subseteq\{1,\ldots,n\}$ with $S\ne B$, independence of the fair coordinates and the presence of a coordinate in the symmetric difference $S\mathbin\triangle B$ give
\[
 \int_{\Delta_n}w_Sw_B\,d\Prob_n
 =\int_{\Delta_n}\prod_{j\in S\mathbin\triangle B}u_j\,d\Prob_n=0.
\]
If $S=B$, the integral is $1$.
\end{samepage}
There are $2^n$ Walsh functions and $\dim L^2(\Prob_n)=2^n$, so they form an orthonormal basis.
Furthermore, the product definition of $\nu_{n,\Theta}$ gives
\begin{align*}
 (T_\Theta w_S)(u)
 &=w_S(u)\int_{\Delta_n}\prod_{j\in S}\epsilon_j\,d\nu_{n,\Theta}(\epsilon)\\
 &=w_S(u)\prod_{j\in S}\left(\frac{1+\Theta}{2}-\frac{1-\Theta}{2}\right)
 =\Theta^{|S|}w_S(u).
\end{align*}
The empty product is $1$, including when $\Theta=0$.
Hence
\begin{equation}\label{eq:noise-spectrum}
 T_\Theta w_S=\Theta^{|S|}w_S,
 \qquad T_{\Theta_1}T_{\Theta_2}=T_{\Theta_1\Theta_2}
 \quad(0\leq\Theta_1,\Theta_2\leq1).
\end{equation}
The second identity holds because the two operators have the same multiplier on every basis vector.
The multipliers are real, so $T_\Theta$ is self-adjoint on $L^2(\Prob_n)$.
Thus the $w_S$ are eigenfunctions, and the numbers $\Theta^{|S|}$ are the corresponding eigenvalues.
We write $T\coloneqq T_{1/2}$ when $n$ is fixed.

\begin{lemma}\label{lem:adjoint}
For $1\leq p\leq\infty$ and $f\in L^p(\Delta_n,\Prob_n)$,
\begin{equation}\label{eq:D-bounded}
 \norm{D_nf}_p\leq4\norm{f}_p,
 \qquad D_na=0\quad(a\in\A_n).
\end{equation}
Let $D_n^\dagger$ denote the Hilbert-space adjoint for the normalized $L^2$ inner products. Its finite-dimensional formula defines an operator from $L^p(\Delta_n^3,Q_n)$ to $L^p(\Delta_n,\Prob_n)$ and, for every $v\in L^p(\Delta_n^3,Q_n)$, satisfies
\begin{equation}\label{eq:adjoint-bound}
 \norm{D_n^\dagger v}_{L^p(\Prob_n)}
 \leq4\norm{v}_{L^p(\Prob_n^{\otimes3})}.
\end{equation}
Moreover,
\begin{equation}\label{eq:normal}
 D_n^\dagger D_n=4(I+P_0-2T).
\end{equation}
\end{lemma}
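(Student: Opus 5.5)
The plan is to deduce everything from Lemma~\ref{lem:coupling}, using the decomposition $D_nf=\sum_{i=0}^3 s_if(U_i)$.

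\emph{The bound \eqref{eq:D-bounded}.} By Lemma~\ref{lem:coupling}(i) each $U_i$ is uniform, so $\norm{f(U_i)}_{L^p(Q_n)}=\norm{f}_p$ for $1\leq p\leq\infty$, and the triangle inequality gives $\norm{D_nf}_p\leq4\norm{f}_p$. For the vanishing on $\A_n$: $\sum_i s_i=0$ kills constants. For $r_j$ we have $x_j+y_j-z_j-w_j=0$, because $\{z_j,w_j\}=\{x_j,y_j\}$ by \eqref{eq:swaps}.

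\emph{The adjoint and \eqref{eq:adjoint-bound}.} For $v$ on $\Delta_n^3$, compute
\[
 \pair{D_nf}{v}_{L^2(Q_n)}=\sum_i s_i\,\E_{Q_n}[f(U_i)v]
 =\sum_i s_i\int_{\Delta_n}f(u)\,\E_{Q_n}[v\mid U_i=u]\,d\Prob_n(u),
\]
where the second equality uses again that each $U_i$ is uniform. Hence
\[
 D_n^\dagger v(u)=\sum_i s_i\,\E_{Q_n}[v\mid U_i=u].
\]
This is a finite sum of conditional expectations, read as functions on $\Delta_n$ via uniformity of $U_i$. Proposition~\ref{prop:integration-tools} gives $\norm{\E[v\mid U_i]}_p\leq\norm{v}_p$, and the triangle inequality yields \eqref{eq:adjoint-bound}.

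\emph{The normal operator \eqref{eq:normal}.} The formula above gives
\[
 D_n^\dagger D_nf(u)=\sum_{i,j}s_is_j\,\E[f(U_j)\mid U_i=u].
\]
I will sort the sixteen terms by the pair $(i,j)$, using Lemma~\ref{lem:coupling}:
\begin{itemize}
\item Diagonal terms ($i=j$): these give $f(u)$ each, so they contribute $4f$.
\item Pairs $\{0,1\}$ and $\{2,3\}$ (four ordered terms, $s_is_j=+1$): by (ii) the two points are independent with uniform marginals, so $\E[f(U_j)\mid U_i=u]=P_0f(u)$. These contribute $4P_0f$.
\item Mixed pairs (eight ordered terms, $s_is_j=-1$): by (iii), conditionally on $U_i=u$ the coordinates of $U_j$ agree independently with those of $u$ with probability $3/4$, i.e.\ $U_j=u\odot\epsilon$ with $\epsilon\sim\nu_{n,1/2}$. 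So the conditional expectation is $T_{1/2}f(u)=Tf(u)$, and these contribute $-8Tf$.
\end{itemize}
Summing gives $4(I+P_0-2T)$.

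The only delicate step is matching the conditional law in (iii) with $\nu_{n,1/2}$. That identification is exactly $\tfrac{1+\Theta}{2}=\tfrac34$ at $\Theta=\tfrac12$. The symmetric form of (iii), with $i$ and $j$ interchanged, handles both orders of each mixed pair. The remainder is bookkeeping.
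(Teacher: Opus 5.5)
Your proposal is correct and follows essentially the paper's argument: the isometric pullbacks $f\mapsto f\circ U_i$, the adjoint $D_n^\dagger v(u)=\sum_i s_i\,\E_{Q_n}[v\mid U_i=u]$ (contractive on each $L^p$ by Jensen), and the sixteen-term expansion of $D_n^\dagger D_n$, which Lemma~\ref{lem:coupling} sorts into $4I$, $4P_0$ and $-8T_{1/2}$. Your explicit identification of the conditional law in Lemma~\ref{lem:coupling}(iii) with $\nu_{n,1/2}$, via $\tfrac{1+\Theta}{2}=\tfrac34$, is the step the paper leaves implicit, and you carry it out correctly.
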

\begin{proof}
Fix $1\leq p\leq\infty$. For $0\leq i\leq3$, define
\[
 C_i:L^p(\Delta_n,\Prob_n)\longrightarrow L^p(\Delta_n^3,Q_n),
 \qquad C_i f\coloneqq f\circ U_i.
\]
By Lemma~\ref{lem:coupling}, $\norm{C_i f}_{L^p(Q_n)}=\norm{f}_{L^p(\Prob_n)}$ for every $f\in L^p(\Delta_n,\Prob_n)$.
Consequently
\[
 D_n=C_0+C_1-C_2-C_3,\qquad
 \norm{D_nf}_{L^p(Q_n)}\leq\sum_{i=0}^3\norm{C_i f}_{L^p(Q_n)}
 =4\norm{f}_{L^p(\Prob_n)}.
\]
The identities $D_n\one=0$ and $D_nr_j=0$ follow from $x_j+y_j=z_j+w_j$.

Define
\[
 F_i:L^p(\Delta_n^3,Q_n)\longrightarrow L^p(\Delta_n,\Prob_n)
\]
by the following formula, for $v\in L^p(\Delta_n^3,Q_n)$:
\[
 (F_i v)(u)\coloneqq
 2^{-2n}\sum_{\substack{\omega\in\Delta_n^3\\U_i(\omega)=u}}v(\omega)
 =\E_{Q_n}[v\mid U_i=u]\qquad(u\in\Delta_n).
\]
The fiber has $2^{2n}$ elements because $U_i$ is uniform. Since $U_i$ has distribution $\Prob_n$, Jensen's inequality gives $\norm{F_i v}_p\leq\norm{v}_p$ for $1\leq p<\infty$; the supremum-norm estimate follows directly from the averaging formula. The normalized inner products are
\[
 \pair{f}{g}_{L^2(\Prob_n)}\coloneqq\int_{\Delta_n}fg\,d\Prob_n,
 \qquad
 \pair{v}{w}_{L^2(Q_n)}\coloneqq\int_{\Delta_n^3}vw\,dQ_n.
\]
For $f\in L^2(\Prob_n)$ and $v\in L^2(Q_n)$, direct integration gives
\[
 \pair{C_i f}{v}_{L^2(\Prob_n^{\otimes3})}
 =\pair{f}{F_i v}_{L^2(\Prob_n)}.
\]
Thus $D_n^\dagger=F_0+F_1-F_2-F_3$, proving \eqref{eq:adjoint-bound}.

To compute the normal operator, expand
\[
 D_n^\dagger D_n=\sum_{i,j=0}^3s_i s_j F_i C_j.
\]
The four terms with $i=j$ give $4I$. The four ordered pairs obtained from $(0,1)$ and $(2,3)$ give $4P_0$, since those points are independent and $s_i=s_j$. Each of the eight remaining ordered pairs gives $-T_{1/2}$ by Lemma~\ref{lem:coupling}. This proves \eqref{eq:normal}.
\end{proof}

\begin{corollary}\label{cor:finite-kernel}
One has $\ker D_n=\A_n$. More precisely,
\begin{equation}\label{eq:normal-spectrum}
 D_n^\dagger D_n w_S=
 \begin{cases}
 0,& |S|=0\text{ or }1,\\
 4(1-2^{1-|S|})w_S,& |S|\geq2.
 \end{cases}
\end{equation}
All nonzero eigenvalues in this formula are at least $2$.
These are eigenvalues of the endomorphism $D_n^\dagger D_n$ of $L^2(\Prob_n)$.
The operator $D_n$ itself has a different target space, so this is not an eigenvalue assertion for $D_n$.
\end{corollary}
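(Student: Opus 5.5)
The plan is to diagonalize the normal operator from Lemma~\ref{lem:adjoint} in the Walsh basis and read off the kernel. By \eqref{eq:normal}, $D_n^\dagger D_n=4(I+P_0-2T)$ with $T=T_{1/2}$, and every term acts diagonally on the Walsh functions. By \eqref{eq:noise-spectrum}, $Tw_S=2^{-|S|}w_S$. The mean projection satisfies $P_0w_S=w_S$ if $S=\varnothing$ and $P_0w_S=0$ otherwise, by the orthogonality computation (or since $P_0=T_0$ and $0^{|S|}$ vanishes unless $S=\varnothing$). This gives the multipliers directly:
\begin{itemize}
\item for $S=\varnothing$: $4(1+1-2)=0$;
\item for $|S|=1$: $4(1+0-2\cdot\tfrac12)=0$;
\item for $|S|\geq2$: $4(1-2\cdot2^{-|S|})=4(1-2^{1-|S|})$.
\end{itemize}
This proves \eqref{eq:normal-spectrum}. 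For $|S|\geq2$ we have $2^{1-|S|}\leq\tfrac12$, so the eigenvalue is at least $2$.

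For the kernel, one inclusion is already in \eqref{eq:D-bounded}: $D_na=0$ for $a\in\A_n$, so $\A_n\subseteq\ker D_n$. For the reverse inclusion, take $f\in\ker D_n$. Then $D_n^\dagger D_nf=0$, so
\[
\pair{D_n^\dagger D_nf}{f}=\norm{D_nf}_2^2=0.
\]
Expand $f=\sum_S\hat f(S)w_S$ in the orthonormal Walsh basis. Then
\[
\norm{D_nf}_2^2=\sum_S\lambda_S\hat f(S)^2,
\]
where $\lambda_S\geq0$ are the multipliers above. Since $\lambda_S\geq2$ whenever $|S|\geq2$, each such coefficient $\hat f(S)$ vanishes. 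Hence $f\in\spanop\{w_\varnothing,w_{\{j\}}\}=\A_n$, because $w_{\{j\}}=r_j$.

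There is no real obstacle. The only point needing care is that $D_n^\dagger$ is the adjoint for the normalized inner products, so the identity $\pair{D_n^\dagger D_nf}{f}=\norm{D_nf}_{L^2(Q_n)}^2$ holds exactly.
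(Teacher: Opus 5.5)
Your proposal is correct and follows the paper's own proof: apply \eqref{eq:normal} to the Walsh basis using \eqref{eq:noise-spectrum}, then identify the kernel through $\norm{D_nf}_2^2=\pair{D_n^\dagger D_nf}{f}$. You have simply written out the multiplier computation and the Parseval step that the paper leaves implicit.
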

\begin{proof}
Apply \eqref{eq:normal} to the Walsh basis using \eqref{eq:noise-spectrum}. Since $\norm{D_nf}_2^2=\pair{D_n^\dagger D_nf}{f}$, the kernel is precisely the sum of Walsh levels zero and one.
\end{proof}

\subsubsection{The \texorpdfstring{$L^{5/4}$}{L(5/4)} estimate}

We use the following classical scalar form of Bernoulli hypercontractivity.
Bonami's general product inequality \cite[Chapter~III, Theorem~3]{Bon70} gives the condition $(q-1)\Theta^2\leq p-1$ for the $L^p$ to $L^q$ estimate, where $1<p\leq q<\infty$.
Here we need $q=2$.
The two-point inequality and its tensorization also appear in Beckner \cite[Section~II]{Bec75}; see \cite[Chapters~9--10]{ODonnell} for a systematic account.
We include the proof in the precise range used below.

\begin{theorem}[Bernoulli hypercontractivity]\label{thm:HC}
For $1<p\leq2$ and $0\leq\Theta\leq\sqrt{p-1}$,
\begin{equation}\label{eq:HC}
 \norm{T_\Theta u}_{L^2(\Prob_n)}\leq\norm{u}_{L^p(\Prob_n)}
 \qquad(u:\Delta_n\to\mathbb R).
\end{equation}
The constant is $1$ for every $n$. In particular,
\begin{equation}\label{eq:HC-special}
 \norm{T_{1/2}u}_2\leq\norm{u}_{5/4},
 \qquad
 \norm{T_{1/\sqrt2}u}_2\leq\norm{u}_{3/2}.
\end{equation}
\end{theorem}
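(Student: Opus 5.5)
The plan is the classical two-step route: prove a two-point inequality for $n=1$, then tensorize to all $n$. First reduce to the extreme value $\Theta=\sqrt{p-1}$. By \eqref{eq:noise-spectrum}, $T_\Theta=T_{\Theta/\sqrt{p-1}}T_{\sqrt{p-1}}$ when $\Theta\le\sqrt{p-1}$ (for $p<2$; the case $p=2$ is just $L^2$-contractivity of $T_\Theta$, already noted). Since $T_{\Theta'}$ is an $L^2$ contraction for every $0\le\Theta'\le1$, it suffices to prove \eqref{eq:HC} with $\Theta=\sqrt{p-1}$. The special cases \eqref{eq:HC-special} then follow from $\sqrt{1/4}=1/2$ and $\sqrt{1/2}=1/\sqrt2$.

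\emph{Two-point inequality ($n=1$).} Write $u(\pm1)=a\pm b$, so that $T_\Theta u(\pm1)=a\pm\Theta b$ and $\norm{T_\Theta u}_2^2=a^2+\Theta^2b^2$. The case $a=0$ is immediate from $\Theta\le1$ and $\norm{u}_p\ge\norm{u}_1=|b|$. Otherwise we may assume $a=1$ by homogeneity, and $t=b\ge0$ by symmetry. It remains to show
\[
 \Bigl(\tfrac{|1+t|^p+|1-t|^p}{2}\Bigr)^{2/p}\ge 1+(p-1)t^2 .
\]
For $|t|\le1$, expand with the binomial series: all even-order coefficients $\binom{p}{2k}$ are nonnegative for $k\ge1$ when $1<p\le2$. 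Hence $\frac{(1+t)^p+(1-t)^p}{2}\ge 1+\frac{p(p-1)}2t^2$, and we need $(1+\frac{p(p-1)}2t^2)^{2/p}\ge1+(p-1)t^2$. This follows from Bernoulli's inequality, since $2/p\ge1$. For $t>1$, substitute $t\mapsto 1/t$: the left side becomes $t^2$ times the $|t|<1$ expression, and the right side satisfies $1+(p-1)t^2\le t^2\,(1+(p-1)t^{-2})$ because $p-1\le1$. So the case $t>1$ reduces to the case $t<1$. This elementary inequality is the main obstacle; in particular the sign of the higher binomial coefficients needs checking. The coefficient $\binom{p}{2k}=\frac{p(p-1)(p-2)\cdots(p-2k+1)}{(2k)!}$ has $2k-2$ negative factors $(p-2),\dots,(p-2k+1)$ (with the first possibly zero), so it is indeed $\ge0$.

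\emph{Tensorization.} Induct on $n$, writing $u=u(u',u_n)$ with $u'\in\Delta_{n-1}$. Since $\nu_{n,\Theta}$ is a product, $T_\Theta^{(n)}=T^{(n-1)}_\Theta\otimes T^{(1)}_\Theta$; the two factors commute and act on separate variables. Set $v\coloneqq T^{(1)}_\Theta$ applied in the last variable. First use the induction hypothesis in $u'$ for each fixed $u_n$, then integrate over $u_n$, giving
\[
 \norm{T_\Theta u}_2\le \norm{v}_{L^2_{u_n}(L^p_{u'})}\le \norm{v}_{L^p_{u'}(L^2_{u_n})}
\]
by Lemma~\ref{lem:mixed-interpolation} with $p\le2$. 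Finally, for each fixed $u'$, apply the $n=1$ inequality in $u_n$: $\norm{v(u',\cdot)}_{L^2_{u_n}}\le\norm{u(u',\cdot)}_{L^p_{u_n}}$. Taking the $L^p_{u'}$ norm yields $\norm{u}_p$, which closes the induction with constant $1$.
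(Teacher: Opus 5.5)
Your proposal is correct. Its architecture matches the paper's. You use the same reduction $T_\Theta=T_{\Theta/\sqrt{p-1}}T_{\sqrt{p-1}}$ to the endpoint; the separate treatment of $p=2$ is unnecessary, since then $\sqrt{p-1}=1$. You also use the same induction chain $\norm{T_\Theta u}_2\le\norm{v}_{L^2_{u_n}(L^p_{u'})}\le\norm{v}_{L^p_{u'}(L^2_{u_n})}\le\norm{u}_p$ through the mixed-norm Minkowski inequality of Lemma~\ref{lem:mixed-interpolation}.

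The genuine difference is in how you prove the two-point inequality.

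The paper's route is analytic. It sets $F(t)=A(t)^{2/p}$ with $A(t)=\frac12(\abs{a+t}^p+\abs{a-t}^p)$. It shows $F''\ge2(p-1)$ away from $t=\pm a$ by discarding the nonnegative $(A')^2$ term and applying midpoint convexity of $s\mapsto s^{(p-2)/p}$. It then integrates Taylor's formula with integral remainder. That last step needs an argument that $F'$ is locally absolutely continuous across the singularities of $\abs{a\pm t}^{p-2}$.

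Your route is algebraic, with three steps. First, the even binomial coefficients $\binom p{2k}$ are nonnegative for $1<p\le2$, which gives $\frac12((1+t)^p+(1-t)^p)\ge1+\frac{p(p-1)}2t^2$ on $[0,1]$. Second, Bernoulli's inequality with exponent $2/p\ge1$ gives the bound $1+(p-1)t^2$. Third, you reduce $t>1$ to $1/t<1$ using $(2-p)(t^2-1)\ge0$. I checked all three steps, including your sign count of the $2k-2$ nonpositive factors. The one detail to state is that at $t=1$ the binomial series still converges absolutely (because $p>0$), or that this case follows by letting $t\uparrow1$.

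What each buys: your argument avoids all regularity issues and needs nothing beyond power series. The paper's argument handles all $t$ uniformly without a case split and yields the stronger pointwise curvature bound $F''\ge2(p-1)$, at the cost of the singularity discussion.
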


\begin{lemma}[Two-point inequality]\label{lem:two-point}
For $1<p\leq2$ and $a,b\in\mathbb R$,
\begin{equation}\label{eq:two-point}
 a^2+(p-1)b^2
 \leq\left(\frac{|a+b|^p+|a-b|^p}{2}\right)^{2/p}.
\end{equation}
Consequently the one-coordinate noise operator satisfies
\[
 \norm{T_{\sqrt{p-1}}u}_{L^2(\{-1,1\})}
 \leq\norm{u}_{L^p(\{-1,1\})}
\]
for $u:\{-1,1\}\to\mathbb R$, with normalized counting measure.
\end{lemma}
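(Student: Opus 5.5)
The two-point inequality is classical; I will prove it by reducing to one variable and then comparing power series. If $a=0$, the inequality reads $(p-1)b^2\le b^2$, which is true because $p\le 2$. If $a\neq0$, both sides are homogeneous of degree $2$ and even in $a$ and in $b$, so we may take $a=1$ and $b=t\ge0$. The target then becomes
\[
 1+(p-1)t^2\le\left(\frac{(1+t)^p+|1-t|^p}{2}\right)^{2/p}.
\]

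For $0\le t<1$, expand binomially. The odd powers of $t$ cancel, leaving
\[
 \frac{(1+t)^p+(1-t)^p}{2}=\sum_{k\ge0}\binom{p}{2k}t^{2k}.
\]
For $1<p\le2$, every coefficient $\binom{p}{2k}=p(p-1)(p-2)\cdots(p-2k+1)/(2k)!$ with $k\ge1$ is nonnegative. The factors $p$ and $p-1$ are positive, and the remaining $2k-2$ factors $p-2,\dots,p-2k+1$ are nonpositive and even in number. Hence the right-hand side is at least $1+\binom p2 t^2=1+\tfrac{p(p-1)}2t^2$.

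It therefore suffices to show $\bigl(1+\tfrac{p(p-1)}{2}s\bigr)^{2/p}\ge 1+(p-1)s$ with $s=t^2\ge0$. Bernoulli's inequality $(1+x)^r\ge1+rx$, valid for $r=2/p\ge1$ and $x\ge0$, gives exactly this, since $\tfrac2p\cdot\tfrac{p(p-1)}2=p-1$.

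The step I expect to be the main obstacle is the range $t\ge1$, where the series no longer applies. I will handle it by swapping roles. Setting $\tau=1/t\in(0,1]$ and multiplying through by $\tau^2$ turns the claim into
\[
 \tau^2+(p-1)\le\left(\frac{(1+\tau)^p+(1-\tau)^p}{2}\right)^{2/p}.
\]
The right side is the same expression as before, so it is at least $1+(p-1)\tau^2$ by the previous case (with $t=1$ obtained by continuity). Finally,
\[
 1+(p-1)\tau^2-\bigl(\tau^2+(p-1)\bigr)=(2-p)(1-\tau^2)\ge0,
\]
which finishes this case.

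For the consequence, write $u:\{-1,1\}\to\mathbb R$ as $u(\epsilon)=a+b\epsilon$, where $a$ is the mean and $b$ the Walsh-$1$ coefficient. By \eqref{eq:noise-spectrum} we have $T_\Theta u=a+\Theta b\epsilon$. With $\Theta=\sqrt{p-1}$, orthonormality of $\one$ and $\epsilon$ gives $\norm{T_\Theta u}_2^2=a^2+(p-1)b^2$. On the other side, $\norm{u}_p^2=\bigl(\tfrac{|a+b|^p+|a-b|^p}{2}\bigr)^{2/p}$. The claimed bound is therefore exactly \eqref{eq:two-point}.
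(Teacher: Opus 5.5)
Your proof is correct, and it takes a different route from the paper.

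\textbf{The paper's route.} It fixes $a\neq0$ and sets $F(t)=A(t)^{2/p}$ with $A(t)=\tfrac12(|a+t|^p+|a-t|^p)$. It proves the curvature bound $F''(t)\geq2(p-1)$ away from $t=\pm a$. The term containing $(A')^2$ is dropped because $2/p\geq1$ makes it nonnegative. The remaining term is bounded below by midpoint convexity of $s\mapsto s^{(p-2)/p}$. Taylor's formula with integral remainder, together with $F(0)=a^2$ and $F'(0)=0$, then gives $F(t)\geq a^2+(p-1)t^2$ for all $t$ at once.

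\textbf{Your route.} For $|b|<|a|$ you use the generalized binomial series, the sign pattern of $\binom{p}{2k}$ for $1<p\leq2$, and Bernoulli's inequality. You reduce $|b|\geq|a|$ to that case through the inversion $\tau=1/t$. This is really the symmetry of $|a+b|^p+|a-b|^p$ under $a\leftrightarrow b$, combined with $(2-p)(1-\tau^2)\geq0$.

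\textbf{What each buys.} Your argument is purely algebraic. It never has to justify local absolute continuity of $F'$ across the points $t=\pm a$, where $A''$ blows up like $|t\mp a|^{p-2}$; the paper must address this. The paper's argument gives one uniform estimate for all $t$. It needs no case split, no series (whose convergence restricts you to $t<1$), and no continuity step at $t=1$.

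The deduction of the one-coordinate bound for $T_{\sqrt{p-1}}$ via $u=a+b\epsilon$ is the same in both proofs.
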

\begin{proof}
The case $p=2$ is an identity, with noise operator $T_1=I$. Suppose $1<p<2$. If $a=0$, then \eqref{eq:two-point} follows from $p-1\leq1$. Assume $a\ne0$ and set
\[
 A(t)\coloneqq\frac{|a+t|^p+|a-t|^p}{2},
 \qquad F(t)\coloneqq A(t)^{2/p}.
\]
The function $A$ is strictly positive. Away from $t=\pm a$, differentiation gives
\[
 A''(t)=p(p-1)\frac{|a+t|^{p-2}+|a-t|^{p-2}}2
\]
and
\begin{align*}
 F''(t)
 &=\frac2p\left(\frac2p-1\right)A(t)^{2/p-2}(A'(t))^2
   +\frac2p A(t)^{2/p-1}A''(t)\\
 &\geq2(p-1)A(t)^{(2-p)/p}
       \frac{|a+t|^{p-2}+|a-t|^{p-2}}2.
\end{align*}
The function $s\mapsto s^{(p-2)/p}$ is convex on $(0,\infty)$, so its defining midpoint convexity inequality shows that the final product, without the factor $2(p-1)$, is at least $1$. Hence $F''(t)\geq2(p-1)$ wherever the displayed derivatives are finite.

The derivative $F'$ is locally absolutely continuous. Indeed, the possible singularities in $A''$ have order $|t\mp a|^{p-2}$, which is locally integrable because $p>1$, and $A$ is bounded away from zero on compact intervals. Since $F(0)=a^2$, $F'(0)=0$, and $F'$ is locally absolutely continuous, for $t\geq0$ we have
\[
 F(t)=F(0)+tF'(0)+\int_0^t(t-s)F''(s)\,ds
 \geq a^2+(p-1)t^2.
\]
The second derivative is used only almost everywhere, so this identity also crosses the two possible singular points.
Evenness of $F$ and $t=|b|$ prove \eqref{eq:two-point}.

For a real function $u$ on $\{-1,1\}$ write $u(\epsilon)=a+b\epsilon$. Its noisy version is $a+\sqrt{p-1}\,b\epsilon$, whose squared $L^2$ norm is the left-hand side of \eqref{eq:two-point}.
\end{proof}

\begin{proof}[Proof of Theorem~\ref{thm:HC}]
Fix $1<p\leq2$ and first take $\Theta=\sqrt{p-1}$. The one-coordinate assertion is Lemma~\ref{lem:two-point}. Suppose the assertion is known on $\Delta_{n-1}$. Write the variables as $(u,t)\in\Delta_{n-1}\times\{-1,1\}$, and define
\begin{align*}
 (Vf)(u,t)&\coloneqq\frac{1+\Theta}{2}f(u,t)
                  +\frac{1-\Theta}{2}f(u,-t),\\
 (Sf)(u,t)&\coloneqq\int_{\Delta_{n-1}}f(u\odot\epsilon,t)
                         \,d\nu_{n-1,\Theta}(\epsilon).
\end{align*}
Thus $V$ averages function values in the last coordinate while $S$ does so in the first $n-1$ coordinates.
The product identity $\nu_{n,\Theta}=\nu_{n-1,\Theta}\otimes\nu_{1,\Theta}$ and Fubini give $SV=VS=T_\Theta$ on $\Delta_n$.

Apply the induction hypothesis for each fixed $t$, then the mixed-norm inequality in Lemma~\ref{lem:mixed-interpolation} (since $p\leq2$), and finally the one-coordinate inequality for each fixed $u$:
\begin{align*}
 \norm{SVf}_{L^2_t(L^2_u)}
 &\leq\norm{Vf}_{L^2_t(L^p_u)}\\
 &\leq\norm{Vf}_{L^p_u(L^2_t)}\\
 &\leq\norm{f}_{L^p_u(L^p_t)}
 =\norm{f}_{L^p(\Prob_n)}.
\end{align*}
All norms here use probability measures. This proves the assertion at $\Theta=\sqrt{p-1}$ for every $n$. If $0\leq\Theta\leq\sqrt{p-1}$, use
\[
 T_\Theta=T_{\Theta/\sqrt{p-1}}T_{\sqrt{p-1}}
\]
and the $L^2$ contractivity of the first factor. This proves \eqref{eq:HC}.
\end{proof}

Let $\Pi_n$ be the orthogonal $L^2(\Prob_n)$ projection onto $\A_n$:
\begin{equation}\label{eq:Q}
 \Pi_nf=(\E f)\one+\sum_{j=1}^n(\E fr_j)r_j.
\end{equation}
All functions on a finite cube belong to $L^2$, so this definition requires no extra integrability assumption.

\begin{proposition}\label{prop:Lp}
For every $n$ and $f:\Delta_n\to\mathbb R$,
\begin{equation}\label{eq:Lp-estimate}
 \norm{f-\Pi_nf}_{5/4}\leq5\norm{D_nf}_{5/4}.
\end{equation}
Consequently,
\begin{equation}\label{eq:Lp-distance}
 \dist_{L^{5/4}(\Prob_n)}(f,\A_n)\leq5\norm{D_nf}_{5/4}.
\end{equation}
\end{proposition}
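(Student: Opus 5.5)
The plan is to invert the normal operator of Lemma~\ref{lem:adjoint} on Walsh levels $\geq2$, and to control the resulting multiplier in $L^{5/4}$ by hypercontractivity (Theorem~\ref{thm:HC}).

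\emph{Setup.} Put $g\coloneqq f-\Pi_nf$ and $h\coloneqq\tfrac14 D_n^\dagger D_nf$.
\begin{enumerate}
\item By \eqref{eq:D-bounded}, $D_n$ kills $\A_n$, so $D_ng=D_nf$ and $h=\tfrac14D_n^\dagger D_ng$.
\item By \eqref{eq:adjoint-bound} with $p=5/4$, $\norm{h}_{5/4}\leq\norm{D_nf}_{5/4}$.
\item $g$ has only Walsh coefficients on levels $|S|\geq2$, since $\Pi_n$ is the orthogonal projection onto levels $0$ and $1$. By \eqref{eq:normal-spectrum}, the same is true of $h$.
\item Since $P_0g=0$, \eqref{eq:normal} gives $h=g-2Tg$, where $T=T_{1/2}$. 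Hence
\[
 g=h+2Tg .
\]
\end{enumerate}

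\emph{Reduction to an $L^2$ bound.} The triangle inequality gives
\[
 \norm{g}_{5/4}\leq\norm{h}_{5/4}+2\norm{Tg}_{5/4}\leq\norm{h}_{5/4}+2\norm{Tg}_2 ,
\]
using the norm comparison in Lemma~\ref{lem:mixed-interpolation}. It therefore suffices to prove
\[
 \norm{Tg}_2\leq2\norm{h}_{5/4}.
\]
This gives $\norm{g}_{5/4}\leq5\norm{h}_{5/4}\leq5\norm{D_nf}_{5/4}$, which is \eqref{eq:Lp-estimate}. Then \eqref{eq:Lp-distance} follows at once because $\Pi_nf\in\A_n$.

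\emph{The key step.} The main point is the bound on $\norm{Tg}_2$, and I would do it by a Walsh multiplier comparison.
\begin{enumerate}
\item On level $k=|S|\geq2$, the relation $h=(I-2T)g$ reads $\hat h(S)=(1-2^{1-k})\hat g(S)$.
\item Hence $\widehat{Tg}(S)=m_k\hat h(S)$ with
\[
 m_k=\frac{2^{-k}}{1-2^{1-k}}=\frac1{2^k-2}.
\]
\item For $k\geq2$ we have $2^k-2\geq2^{k-1}$, so $m_k\leq2\cdot2^{-k}$. That is, $m_k$ is at most twice the multiplier $2^{-k}$ of $T$.
\item Since $h$ lives only on levels $\geq2$, Parseval gives $\norm{Tg}_2\leq2\norm{Th}_2$.
\item Finally, \eqref{eq:HC-special} gives $\norm{Th}_2\leq\norm{h}_{5/4}$.
\end{enumerate}

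The only delicate point is that the bound $m_k\leq2\cdot2^{-k}$ needs levels $0$ and $1$ to be absent, where $I-2T$ is not invertible in a useful way. This is why I pass to $g=f-\Pi_nf$ first, and why it matters that $h$, being in the range of $D_n^\dagger D_n$, is supported on levels $\geq2$.
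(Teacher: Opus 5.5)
Your proposal is correct and is essentially the paper's proof. The paper likewise writes $f-\Pi_nf=\tfrac14D_n^\dagger D_nf+2T(f-\Pi_nf)$ and bounds the smoothed term by the same Parseval multiplier comparison on levels $\geq2$ (stated there as $4(1-2^{1-k})\geq2$, equivalent to your $m_k\leq2\cdot2^{-k}$), followed by $\norm{T_{1/2}u}_2\leq\norm{u}_{5/4}$ and the $L^{5/4}$ adjoint bound; your $g$ and $h$ are simply the paper's $h$ and $\tfrac14 g$.
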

\begin{proof}
Set $h=f-\Pi_nf$ and $g=D_n^\dagger D_nf$. The function $h$ has only Walsh levels of degree at least two. Since $D_n\Pi_nf=0$ and $P_0h=0$, formula \eqref{eq:normal} gives
\begin{equation}\label{eq:g-h}
 g=4(I-2T)h.
\end{equation}
On each Walsh level of degree $k\geq2$, the multiplier $4(1-2^{1-k})$ is at least $2$. The operators in \eqref{eq:g-h} commute with $T$. Parseval's identity therefore gives
\begin{equation}\label{eq:smoothed-spectral}
 \norm{Th}_2\leq\tfrac12\norm{Tg}_2
 \leq\tfrac12\norm{g}_{5/4},
\end{equation}
where the last step is \eqref{eq:HC-special}. Solving \eqref{eq:g-h} for $h$ gives $h=g/4+2Th$. Since the underlying measure is a probability measure,
\begin{align*}
 \norm{h}_{5/4}
 &\leq\tfrac14\norm{g}_{5/4}+2\norm{Th}_{5/4}\\
 &\leq\tfrac14\norm{g}_{5/4}+2\norm{Th}_2
 \leq\tfrac54\norm{g}_{5/4}
 \leq5\norm{D_nf}_{5/4},
\end{align*}
using \eqref{eq:adjoint-bound} in the last step. The distance estimate follows by using $\Pi_nf\in\A_n$ as an approximant.
\end{proof}

\begin{remark}\label{rem:no-L1-Q}
No uniform $L^1$ bound for $\Pi_n$ is assumed or deduced. The estimate just proved applies in $L^{5/4}$. The $L^1$ argument below works with the quotient distance, not with $\norm{f-\Pi_nf}_1$.
\end{remark}

\subsection{A tail estimate from correlated small sets}\label{sec:tails}

The passage to $L^1$ requires control of large values, not merely a spectral gap in $L^2$.
We use the classical small-set consequence of hypercontractivity; see \cite[Section~9.5]{ODonnell}.
For the present coordinate-exchange coupling, the same estimate is used in conditional-probability form in \cite[p.~33:4]{BP21}.
We retain its short proof to fix the measure, the parameter, and the exponent.

\begin{lemma}\label{lem:small-sets}
For every $B\subseteq\Delta_n$ and all distinct $i,j\in\{0,1,2,3\}$,
\begin{equation}\label{eq:small-sets}
 Q_n(U_i\in B,\ U_j\in B)\leq\Prob_n(B)^{4/3}.
\end{equation}
\end{lemma}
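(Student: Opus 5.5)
The plan is to split according to Lemma~\ref{lem:coupling}. There are two kinds of pairs, and each is handled by a different earlier result.

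\emph{Independent pairs.} For $(i,j)\in\{(0,1),(2,3)\}$, Lemma~\ref{lem:coupling}(ii) says that $U_i$ and $U_j$ are independent with uniform marginals. So
\[
 Q_n(U_i\in B,\ U_j\in B)=\Prob_n(B)^2\leq\Prob_n(B)^{4/3},
\]
because $\Prob_n(B)\leq1$.

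\emph{Correlated pairs.} For the remaining pairs, the first step is to express the joint probability through the noise operator. By Lemma~\ref{lem:coupling}(iii), conditionally on $U_i=u$ the point $U_j$ has law $u\odot\epsilon$ with $\epsilon\sim\nu_{n,1/2}$. The probability of agreement in each coordinate is $3/4=(1+\Theta)/2$ with $\Theta=1/2$. Hence
\[
 \E_{Q_n}[\one_B(U_j)\mid U_i=u]=(T_{1/2}\one_B)(u).
\]
The formula is symmetric in $i$ and $j$, so it also covers the reversed orders. Consequently
\[
 Q_n(U_i\in B,\ U_j\in B)=\pair{\one_B}{T_{1/2}\one_B}_{L^2(\Prob_n)}.
\]

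Next, use the semigroup identity \eqref{eq:noise-spectrum} to write $T_{1/2}=T_{1/\sqrt2}T_{1/\sqrt2}$. Since $T_{1/\sqrt2}$ is self-adjoint, the inner product equals $\norm{T_{1/\sqrt2}\one_B}_2^2$. The second inequality in \eqref{eq:HC-special} then gives
\[
 \norm{T_{1/\sqrt2}\one_B}_2^2\leq\norm{\one_B}_{3/2}^2=\Prob_n(B)^{4/3}.
\]
This is exactly the exponent claimed in \eqref{eq:small-sets}.

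The only point needing care is the identification of the conditional law with the noise kernel $\nu_{n,1/2}$, i.e.\ matching $3/4$ with $(1+\Theta)/2$. Once that is in place, the exponent $4/3$ comes out directly from choosing $p=3/2$ in Theorem~\ref{thm:HC}. This choice is admissible because $\Theta=1/\sqrt2=\sqrt{p-1}$ is exactly the boundary value allowed there.
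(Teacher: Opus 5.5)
Your proof is correct and is essentially the paper's argument. Independent pairs give $\Prob_n(B)^2\leq\Prob_n(B)^{4/3}$, and for correlated pairs you rewrite the joint probability as $\pair{\one_B}{T_{1/2}\one_B}=\norm{T_{1/\sqrt2}\one_B}_2^2$ and apply Theorem~\ref{thm:HC} with $p=3/2$, exactly as the paper does.
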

\begin{proof}
For the independent pairs in Lemma~\ref{lem:coupling}, the left-hand side is $\Prob_n(B)^2$, which is at most $\Prob_n(B)^{4/3}$. For any of the other pairs it equals
\begin{align*}
 \pair{\one_B}{T_{1/2}\one_B}
 &=\norm{T_{1/\sqrt2}\one_B}_2^2\\
 &\leq\norm{\one_B}_{3/2}^2=\Prob_n(B)^{4/3}.
\end{align*}
Here $T_{1/2}=T_{1/\sqrt2}T_{1/\sqrt2}$ and self-adjointness give the equality.
For the inequality, use $p=3/2$ and $\Theta=1/\sqrt2=\sqrt{p-1}$ in Theorem~\ref{thm:HC}.
Finally, $\norm{\one_B}_{3/2}=\Prob_n(B)^{2/3}$, giving the exponent $4/3$ after squaring.
In general, the same argument gives
\[
 \int\one_BT_\Theta\one_B\,d\Prob_n
 \leq\Prob_n(B)^{2/(1+\Theta)}\quad(0\leq\Theta\leq1):
\]
for $\Theta>0$ take $p=1+\Theta$ and use $T_\Theta=T_{\sqrt\Theta}^2$; for $\Theta=0$, use $T_0=P_0$.
\end{proof}

\begin{lemma}[Tail estimate]\label{lem:tail}
Let $f:\Delta_n\to\mathbb R$ satisfy $\norm{f}_1=1$, and put $\varepsilon=\norm{D_nf}_1$. Then for every $M>0$,
\begin{equation}\label{eq:tail}
 \int_{\{|f|>M\}}|f|\dd\Prob_n
 \leq2\varepsilon+\gamma M^{-1/3},
 \qquad \gamma=6^{7/3}.
\end{equation}
The estimate is independent of $n$.
\end{lemma}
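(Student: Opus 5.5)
The plan is to compare the four values $f(U_0),\dots,f(U_3)$ pointwise, using that their signed sum is $D_nf$, and to control the only bad configuration (two large values at once) by the correlated small-set bound of Lemma~\ref{lem:small-sets}. First, by Lemma~\ref{lem:coupling}(i) each $U_i$ has law $\Prob_n$, so
\[
 \int_{\{|f|>M\}}|f|\dd\Prob_n=\frac14\,\E_{Q_n}\sum_{i=0}^3|f(U_i)|\mathbf1_{\{|f(U_i)|>M\}} .
\]
Pointwise on $\Delta_n^3$, order the four numbers $|f(U_i)|$, and let $m_1\geq m_2$ be the two largest. From $D_nf=\sum_i s_if(U_i)$ we get $m_1\leq|D_nf|+3m_2$. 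Hence there is a dichotomy whenever $m_1>M$:
\begin{enumerate}
\item either $m_2\leq m_1/6$, in which case $m_1\leq 2|D_nf|$;
\item or $m_2>m_1/6>M/6$, in which case two distinct points are simultaneously in $B\coloneqq\{|f|>M/6\}$.
\end{enumerate}
Only the maximal index can exceed $M$ without a second index exceeding $M/6$. Summing the two cases therefore gives a pointwise bound of the shape
\[
 \sum_i|f(U_i)|\mathbf1_{\{|f(U_i)|>M\}}
 \leq 2|D_nf|\cdot(\text{bounded count})+\sum_{i\neq j}|f(U_i)|\mathbf1_{\{|f(U_i)|>M,\ |f(U_j)|>M/6\}}.
\]
The first term contributes a multiple of $\varepsilon$. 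The constant has to be tuned (dividing by $4$, and possibly splitting at $m_1/6$ rather than $M/6$) to reach exactly $2\varepsilon$.

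For the pair terms I would use the layer-cake formula of Lemma~\ref{lem:distribution-tools} in the variable $t$:
\[
 \E_{Q_n}\bigl[|f(U_i)|\mathbf1_{\{|f(U_i)|>M,\,U_j\in B\}}\bigr]
 = M\,Q_n(U_i\in B_M,U_j\in B)+\int_M^\infty Q_n(U_i\in B_t,\ U_j\in B)\,dt ,
\]
where $B_t\coloneqq\{|f|>t\}$. The first term is at most $M\,Q_n(U_i,U_j\in B)\leq M\,\Prob_n(B)^{4/3}\leq M(6/M)^{4/3}=6^{4/3}M^{-1/3}$. This uses Lemma~\ref{lem:small-sets} together with Markov's inequality $\Prob_n(B)\leq 6/M$ (recall $\norm{f}_1=1$). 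It already has the right form $\gamma M^{-1/3}$.

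The main obstacle is the tail integral over $t>M$. The trivial bounds $Q_n(\cdots)\leq\min\{(6/M)^{4/3},\,1/t\}$ only give a logarithmic loss. To avoid it, I would exploit the case split more sharply: in case (ii) the second value exceeds $m_1/6>t/6$, so both points lie in $B_{t/6}$, not merely in $B$. Hence
\[
 Q_n(U_i\in B_t,\text{ case (ii)})\leq\sum_{j\neq i}Q_n(U_i,U_j\in B_{t/6})\leq 3\,(6/t)^{4/3}
\]
by Lemma~\ref{lem:small-sets} and Markov. This is integrable on $[M,\infty)$ and gives $\int_M^\infty(6/t)^{4/3}dt=3\cdot6^{4/3}M^{-1/3}$. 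Collecting the constants, after the factor $\tfrac14$ and the count of ordered pairs, should give $\gamma=6^{7/3}$. So the decisive step is to run the layer-cake decomposition for the \emph{maximum} with the threshold $t/6$ tied to the level $t$, rather than with the fixed threshold $M/6$. I would check that this handles every $n$ uniformly, as no constant above depends on $n$.
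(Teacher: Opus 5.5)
Your approach is essentially the paper's: the same ratio-$6$ dichotomy between the largest value and the others, the small-set Lemma~\ref{lem:small-sets} applied to a \emph{single} level set, and the decisive idea of tying the second threshold to the running level $t$. The paper encodes that idea as $V_j=\min(Z_0,6Z_j)$ with $Q_n(V_j>t)=Q_n(Z_0>t,\ Z_j>t/6)\leq a(t/6)^{4/3}$, where $a(t)\coloneqq\Prob_n\{|f|>t\}$. The symmetrization over four indices is unnecessary. Working with $Z_0=|f(U_0)|$ alone already gives the left side, and your case (i) then yields the coefficient $2$ of $\varepsilon$ directly.

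The constant, however, does not come out as you claim. You bound the integrand by Markov, $a(t/6)^{4/3}\leq(6/t)^{4/3}$. This gives $\int_M^\infty\leq 3\cdot6^{4/3}M^{-1/3}$ for each pair, so each of the three pairs contributes $4\cdot6^{4/3}M^{-1/3}$. The total is $12\cdot6^{4/3}M^{-1/3}=2\cdot6^{7/3}M^{-1/3}$. The factor $\tfrac14$ from symmetrizing does not help, since every index contributes the same bound.

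To reach $\gamma=6^{7/3}$ you must also use $\int_0^\infty a=\norm{f}_1=1$, not only $a(s)\leq1/s$. For $t\geq M$ one has $a(t/6)^{4/3}\leq(6/M)^{1/3}a(t/6)$ and $\int_M^\infty a(t/6)\,dt=6\int_{M/6}^\infty a(s)\,ds\leq6$. Hence the integral is at most $6^{4/3}M^{-1/3}$, each pair contributes $2\cdot6^{4/3}M^{-1/3}$, and the total is $6\cdot6^{4/3}=6^{7/3}$. With this replacement your proof is complete. As written, it proves the lemma only with $\gamma$ doubled. That would be harmless for Theorem~\ref{thm:finite-endpoint}, but it is not the stated constant.
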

\begin{proof}
Define
\[
 a(t)\coloneqq\Prob_n\{|f|>t\}\quad(t>0),
 \qquad Z_i\coloneqq|f(U_i)|\quad(0\leq i\leq3).
\]
The distribution identities in Lemma~\ref{lem:distribution-tools} give
\begin{equation}\label{eq:distribution}
 a(t)\leq t^{-1},\qquad \int_0^\infty a(t)\dd t=1.
\end{equation}
Each $Z_i$ has this same tail distribution. We do not assume that these four random variables are independent.

On the event
\[
 Z_0>M,\qquad Z_j\leq Z_0/6\quad(j=1,2,3),
\]
the reverse triangle inequality gives
\[
 |D_nf|\geq Z_0-Z_1-Z_2-Z_3\geq Z_0/2.
\]
Splitting off this event and using the union bound on its complement yields
\begin{equation}\label{eq:tail-split}
 \E_{Q_n}[Z_0\one_{\{Z_0>M\}}]
 \leq2\varepsilon+
 \sum_{j=1}^3\E_{Q_n}[Z_0\one_{\{Z_0>M,\ Z_j>Z_0/6\}}].
\end{equation}
For $j\in\{1,2,3\}$ set $V_j\coloneqq\min(Z_0,6Z_j)$. On the event appearing in the $j$th summand, $V_j=Z_0>M$. Hence
\begin{equation}\label{eq:min-domination}
 \E_{Q_n}[Z_0\one_{\{Z_0>M,\ Z_j>Z_0/6\}}]
 \leq\E_{Q_n}[V_j\one_{\{V_j>M\}}].
\end{equation}
For every $t>0$, Lemma~\ref{lem:small-sets}, applied to the \emph{same} level set at threshold $t/6$, gives
\begin{align}
 Q_n(V_j>t)
 &=Q_n(Z_0>t,\ Z_j>t/6)\notag\\
 &\leq Q_n(Z_0>t/6,\ Z_j>t/6)
 \leq a(t/6)^{4/3}.\label{eq:min-tail}
\end{align}
The truncated layer-cake identity, followed by \eqref{eq:min-tail}, shows that
\begin{equation}\label{eq:layer-min}
 \E_{Q_n}[V_j\one_{\{V_j>M\}}]
 \leq M a(M/6)^{4/3}+\int_M^\infty a(t/6)^{4/3}\dd t.
\end{equation}
The first term is at most $6^{4/3}M^{-1/3}$ by \eqref{eq:distribution}. For $t\geq M$, the same estimate gives
\[
 a(t/6)^{4/3}\leq(6/M)^{1/3}a(t/6).
\]
Therefore the integral in \eqref{eq:layer-min} is at most
\[
 (6/M)^{1/3}\int_M^\infty a(t/6)\dd t
 =6(6/M)^{1/3}\int_{M/6}^\infty a(s)\dd s
 \leq6^{4/3}M^{-1/3}.
\]
Each of the three summands in \eqref{eq:tail-split} is consequently bounded by $2\cdot6^{4/3}M^{-1/3}$. Summation proves \eqref{eq:tail}, because $3\cdot2\cdot6^{4/3}=6^{7/3}$.
\end{proof}

\begin{remark}\label{rem:homogeneous-tail}
Rescaling gives the homogeneous form
\[
 \int_{\{|f|>M\norm{f}_1\}}|f|\dd\Prob_n
 \leq2\norm{D_nf}_1+\gamma M^{-1/3}\norm{f}_1
\]
for nonzero $f$. The tail estimate applies to all functions, not only to functions orthogonal to $\A_n$. For affine functions the defect vanishes, which is consistent with their uniformly controlled tails after $L^1$ normalization.
\end{remark}

\subsection{The dimension-free \texorpdfstring{$L^1$}{L1} quotient estimate}\label{sec:endpoint}

Define the following absolute constants:
\begin{equation}\label{eq:constants}
 \begin{gathered}
 \gamma=6^{7/3},\qquad A_0=2+\gamma,\\
 B_0=A_0+5\cdot4^{1/5}(1+4A_0)^{4/5},
 \qquad C=B_0^5.
 \end{gathered}
\end{equation}
No optimization is intended.

\begin{theorem}\label{thm:finite-endpoint}
For every $n\geq1$ and $f:\Delta_n\to\mathbb R$,
\begin{equation}\label{eq:finite-endpoint}
 \dist_{L^1(\Prob_n)}(f,\A_n)\leq C\norm{D_nf}_1.
\end{equation}
Together with \eqref{eq:D-bounded}, this gives
\[
 C^{-1}\dist_{L^1(\Prob_n)}(f,\A_n)
 \leq\norm{D_nf}_1
 \leq4\dist_{L^1(\Prob_n)}(f,\A_n).
\]
\end{theorem}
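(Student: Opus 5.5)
The plan is to prove a weak, nonlinear estimate $\dist_1\leq B_0\norm{D_nf}_1^{1/5}$ for functions normalized so that their $L^1$ distance to $\A_n$ equals $\norm{f}_1$. Homogeneity will then turn this into the linear bound \eqref{eq:finite-endpoint} with $C=B_0^5$. The upper bound $\norm{D_nf}_1\leq4\dist_1(f,\A_n)$ is immediate from \eqref{eq:D-bounded}: write $D_nf=D_n(f-a)$ for $a\in\A_n$ and take the infimum.

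\emph{Reduction.} If $\dist_1(f,\A_n)=0$ there is nothing to prove. Otherwise $\A_n$ is finite-dimensional, so the distance is attained at some $a\in\A_n$. Put $h\coloneqq(f-a)/\norm{f-a}_1$. Then $\norm{h}_1=1$ and $\dist_1(h,\A_n)=1$. Moreover $D_nh=D_nf/\norm{f-a}_1$, because $D_na=0$. It therefore suffices to show
\[
 1\leq C\varepsilon,\qquad \varepsilon\coloneqq\norm{D_nh}_1.
\]
If $\varepsilon>1$ this holds trivially, since $C\geq1$. So assume $\varepsilon\leq1$.

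\emph{Truncation.} Take $M\coloneqq\varepsilon^{-3}$ and split $h=g+b$, where $g\coloneqq h\one_{\{|h|\leq M\}}$. By the tail estimate, Lemma~\ref{lem:tail},
\[
 \norm{b}_1\leq2\varepsilon+\gamma M^{-1/3}=(2+\gamma)\varepsilon=A_0\varepsilon.
\]
By \eqref{eq:D-bounded} with $p=1$,
\[
 \norm{D_ng}_1\leq\varepsilon+4\norm{b}_1\leq(1+4A_0)\varepsilon.
\]
By \eqref{eq:D-bounded} with $p=\infty$, $\norm{D_ng}_\infty\leq4M$. The first interpolation estimate of Lemma~\ref{lem:mixed-interpolation} then gives
\[
 \norm{D_ng}_{5/4}\leq(4M)^{1/5}\bigl((1+4A_0)\varepsilon\bigr)^{4/5}.
\]

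\emph{Combining.} Proposition~\ref{prop:Lp}, together with $\norm{\cdot}_1\leq\norm{\cdot}_{5/4}$, yields
\[
 \dist_1(g,\A_n)\leq5\cdot4^{1/5}(1+4A_0)^{4/5}M^{1/5}\varepsilon^{4/5}
 =5\cdot4^{1/5}(1+4A_0)^{4/5}\varepsilon^{1/5}.
\]
The last equality uses $M^{1/5}=\varepsilon^{-3/5}$. Since $\dist_1(h,\A_n)\leq\dist_1(g,\A_n)+\norm{b}_1$ and $\varepsilon\leq\varepsilon^{1/5}$ for $\varepsilon\leq1$,
\[
 1=\dist_1(h,\A_n)\leq A_0\varepsilon+5\cdot4^{1/5}(1+4A_0)^{4/5}\varepsilon^{1/5}\leq B_0\varepsilon^{1/5}.
\]
Hence $\varepsilon\geq B_0^{-5}=C^{-1}$, which is the claim.

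\emph{Main difficulty.} The delicate point is the choice of the truncation level $M$. It must be large enough that the tail term $\gamma M^{-1/3}$ is $O(\varepsilon)$, yet the $L^\infty$ factor $M^{1/5}$ must not destroy the $\varepsilon^{4/5}$ gain. The exponents $1/3$ and $1/5$ make $M=\varepsilon^{-3}$ work, but only with the sublinear power $\varepsilon^{1/5}$. That power is why the normalization $\dist_1(h,\A_n)=\norm{h}_1=1$ is essential: it converts the nonlinear bound into the linear one by a threshold argument. Remark~\ref{rem:no-L1-Q} warns that the projection $\Pi_n$ is not used in $L^1$, and the argument respects this: $\Pi_n$ enters only through Proposition~\ref{prop:Lp}, applied to the bounded function $g$.
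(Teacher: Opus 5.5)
Your proposal is correct and matches the paper's proof essentially line by line. It uses the same normalization to $\norm{h}_1=\dist_1(h,\A_n)=1$, the same truncation level $M=\varepsilon^{-3}$ with Lemma~\ref{lem:tail}, the same $L^1$/$L^\infty$ interpolation feeding Proposition~\ref{prop:Lp}, and the same constants $A_0,B_0,C$. The only detail you omit is that $\varepsilon>0$, which you need for $M=\varepsilon^{-3}$ to be defined; the paper obtains it from $\ker D_n=\A_n$ (Corollary~\ref{cor:finite-kernel}), since $\dist_1(h,\A_n)=1$.
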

\begin{proof}
Write $d=\dist_{L^1(\Prob_n)}(f,\A_n)$. If $d=0$, the assertion is immediate. Otherwise, a best approximant $a_*\in\A_n$ exists. Indeed, a minimizing sequence in $\A_n$ is bounded by the triangle inequality; finite dimensionality gives a convergent subsequence. Replacing $f$ by $(f-a_*)/d$, and using $D_na_*=0$, reduces the proof to the normalization
\begin{equation}\label{eq:normalization}
 \norm{f}_1=\dist_{L^1(\Prob_n)}(f,\A_n)=1.
\end{equation}
Set $\varepsilon=\norm{D_nf}_1$. By Corollary~\ref{cor:finite-kernel}, $\varepsilon>0$. If $\varepsilon\geq1$, then $1\leq C\varepsilon$. It remains to consider $0<\varepsilon\leq1$.

Choose
\begin{equation}\label{eq:cutoff}
 M=\varepsilon^{-3},\qquad
 b=f\one_{\{|f|\leq M\}},\qquad
 \delta=\norm{f-b}_1.
\end{equation}
Lemma~\ref{lem:tail} gives
\begin{equation}\label{eq:delta}
 \delta\leq2\varepsilon+\gamma M^{-1/3}
 =A_0\varepsilon.
\end{equation}
By boundedness of $D_n$ and the pointwise bound on $b$,
\begin{equation}\label{eq:Db-endpoints}
 \norm{D_nb}_1\leq\varepsilon+4\delta
 \leq(1+4A_0)\varepsilon,
 \qquad \norm{D_nb}_\infty\leq4M.
\end{equation}
For any bounded function $v$ on a probability space,
\[
 \norm{v}_{5/4}\leq\norm{v}_\infty^{1/5}\norm{v}_1^{4/5};
\]
this follows directly by integrating $|v|^{5/4}\leq\norm{v}_\infty^{1/4}|v|$. Thus \eqref{eq:Db-endpoints} and \eqref{eq:cutoff} imply
\begin{align}
 \norm{D_nb}_{5/4}
 &\leq (4\varepsilon^{-3})^{1/5}
       ((1+4A_0)\varepsilon)^{4/5}\notag\\
 &=4^{1/5}(1+4A_0)^{4/5}\varepsilon^{1/5}.
 \label{eq:Db-Lp}
\end{align}
Using \eqref{eq:normalization}, the triangle inequality for distance, and Proposition~\ref{prop:Lp}, we obtain
\begin{align}
 1
 &\leq\norm{f-b}_1+\dist_{L^1(\Prob_n)}(b,\A_n)\notag\\
 &\leq\delta+\dist_{L^{5/4}(\Prob_n)}(b,\A_n)\notag\\
 &\leq A_0\varepsilon
      +5\cdot4^{1/5}(1+4A_0)^{4/5}\varepsilon^{1/5}
 \leq B_0\varepsilon^{1/5}.
 \label{eq:absorption}
\end{align}
The second line uses $\norm{u}_1\leq\norm{u}_{5/4}$ for each candidate residual $u=b-a$. The final step uses $\varepsilon\leq1$. It follows that $\varepsilon\geq B_0^{-5}=C^{-1}$. Undoing the normalization gives \eqref{eq:finite-endpoint}.

For the upper estimate, apply $\norm{D_n(f-a)}_1\leq4\norm{f-a}_1$ to each $a\in\A_n$ and take the infimum.
\end{proof}

\subsection{Passage to the countable cube and the embedding}\label{sec:infinite}

We now work on $(\Delta,\Prob)$ from the introduction. Let $\F_n$ be the sigma-algebra generated by $r_1,\ldots,r_n$, and let
\[
 E_nf=\E[f\mid\F_n].
\]
We identify $\F_n$-measurable functions with functions on $\Delta_n$.

\begin{lemma}\label{lem:conditional}
The maps $E_n$ are contractions on $L^1(\Prob)$, and $E_nf\to f$ in $L^1(\Prob)$ for each $f\in L^1(\Prob)$. Moreover,
\begin{equation}\label{eq:conditional-A}
 E_nR=\spanop\{r_1,\ldots,r_n\},\qquad E_n\A=\A_n.
\end{equation}
For every $\F_n$-measurable $u$,
\begin{equation}\label{eq:distance-reflection}
 \dist_{L^1(\Prob)}(u,\A)
 =\dist_{L^1(\Prob_n)}(u,\A_n).
\end{equation}
\end{lemma}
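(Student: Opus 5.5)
The plan is to prove the three assertions in turn, using only Proposition~\ref{prop:integration-tools} and Lemma~\ref{lem:Bernoulli-law}.

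\emph{Contractivity and convergence.} Contractivity of $E_n$ on $L^1(\Prob)$ is the case $p=1$ of Proposition~\ref{prop:integration-tools}. For convergence, take first a function $g$ depending on finitely many coordinates, say the first $m$. Then $g$ is $\F_n$-measurable for $n\geq m$, so $E_ng=g$. Such functions (for instance the simple functions on cylinder sets) are dense in $L^1(\Prob)$, because cylinder sets generate the product $\sigma$-algebra; the approximation is the standard monotone-class/regularity argument. Now $\norm{E_nf-f}_1\leq\norm{E_n(f-g)}_1+\norm{E_ng-g}_1+\norm{g-f}_1$, and the two outer terms are each at most $\norm{f-g}_1$ by contractivity. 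This is the usual $\varepsilon/3$ argument.

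\emph{Images of $R$ and $\A$.} For $j\leq n$, $r_j$ is $\F_n$-measurable, so $E_nr_j=r_j$. For $j>n$, $r_j$ is independent of $\F_n$ by Lemma~\ref{lem:Bernoulli-law}. For $A\in\F_n$ this gives $\int_Ar_j\,d\Prob=\Prob(A)\E r_j=0$, hence $E_nr_j=0$. Linearity gives $E_n(\spanop\{r_j\})=\spanop\{r_1,\dots,r_n\}$. Since $E_n$ is continuous and the target is finite-dimensional, hence closed, we get $E_nR\subseteq\spanop\{r_1,\ldots,r_n\}$. Equality holds because each $r_j$ with $j\le n$ is fixed by $E_n$. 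Adding constants, which $E_n$ fixes, gives $E_n\A=\A_n$.

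\emph{Distance identity.} Let $u$ be $\F_n$-measurable and identify it with a function on $\Delta_n$.
\begin{itemize}
\item[] Since $\A_n\subseteq\A$ and integrals of $\F_n$-measurable functions agree on $(\Delta,\Prob)$ and $(\Delta_n,\Prob_n)$ (Lemma~\ref{lem:Bernoulli-law}), we get $\dist_{L^1(\Prob)}(u,\A)\leq\dist_{L^1(\Prob_n)}(u,\A_n)$.
\item[] Conversely, for any $a\in\A$, contractivity and $E_nu=u$ give $\norm{u-E_na}_1=\norm{E_n(u-a)}_1\leq\norm{u-a}_1$. Since $E_na\in\A_n$, taking the infimum yields the reverse inequality.
\end{itemize}

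The only point needing some care is the density step behind $E_nf\to f$. One could alternatively invoke the martingale convergence theorem, but the density argument is elementary and suffices. Everything else is direct.
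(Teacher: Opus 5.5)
Your proposal is correct and follows essentially the same route as the paper. Both arguments use Jensen for contractivity, density of cylinder functions to get $\norm{E_nf-f}_1\leq2\norm{f-g}_1$ for large $n$, independence to compute $E_nr_j$ together with continuity and finite-dimensional closedness for $E_nR$, and the two-sided infimum argument based on $E_n\A=\A_n$ and agreement of cylinder norms for the distance identity.
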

\begin{proof}
Contractivity is Jensen's inequality. Finite-coordinate cylinder sets form an algebra generating the product $\sigma$-algebra. The monotone-class theorem and dominated convergence therefore imply that cylinder functions are dense in $L^1(\Prob)$. Given a cylinder function $v$ depending on the first $m$ coordinates, $E_nv=v$ for $n\geq m$; consequently
\[
 \norm{E_nf-f}_1\leq2\norm{f-v}_1\qquad(n\geq m).
\]
Density proves convergence.

Independence gives $E_nr_j=r_j$ if $j\leq n$, and $E_nr_j=0$ if $j>n$. By continuity and finite-dimensional closedness, $E_nR$ is contained in the indicated span. The reverse inclusion follows because $E_n$ fixes that span. The assertion about $\A$ follows as well.

Since $\A_n\subseteq\A$, the left side of \eqref{eq:distance-reflection} is at most the right side. Conversely, for any $a\in\A$, the element $E_na$ belongs to $\A_n$, and
\[
 \norm{u-E_na}_1=\norm{E_n(u-a)}_1\leq\norm{u-a}_1.
\]
Taking infima proves the reverse inequality. The $L^1$ norm of a cylinder function agrees exactly with its norm on the finite cube.
\end{proof}

\begin{lemma}\label{lem:infinite-D}
Formula \eqref{eq:D-infinite} defines a bounded linear operator
\[
 D:L^1(\Prob)\longrightarrow L^1(\Prob^{\otimes3})
\]
of norm at most $4$. It vanishes on $\A$. If $u$ depends on the first $n$ coordinates, then $Du$ is the corresponding cylinder realization of $D_nu$ and
\begin{equation}\label{eq:cylinder-D}
 \norm{Du}_{L^1(\Prob^{\otimes3})}
 =\norm{D_nu}_{L^1(\Prob_n^{\otimes3})}.
\end{equation}
\end{lemma}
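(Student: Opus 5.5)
The plan is to handle well-definedness and boundedness through the four composition maps $C_i f\coloneqq f\circ U_i$, where $U_0=x$, $U_1=y$, $U_2=z$, $U_3=w$ are now viewed as maps $\Delta^3\to\Delta$. Each $U_i$ is measurable for the product $\sigma$-algebras, since every coordinate of $z$ and $w$ is a polynomial in finitely many coordinates of $(x,y,\sigma)$. Each $U_i$ also pushes $\Prob^{\otimes3}$ forward to $\Prob$. For $U_0$ and $U_1$ this is immediate. For $z$ and $w$, I would argue as in Lemma~\ref{lem:coupling}: for fixed $\sigma$, the map $(x,y)\mapsto(z,w)$ swaps the entries of certain coordinates. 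It is therefore a measure-preserving bijection of $(\Delta^2,\Prob^{\otimes2})$. To check this, compare the measures of finite cylinder sets, using Lemma~\ref{lem:Bernoulli-law}, and extend by uniqueness of measures agreeing on a $\pi$-system generating the $\sigma$-algebra. Fubini in $\sigma$ then gives that $z$ and $w$ have law $\Prob$.

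Once the push-forward identity is in place, $f\circ U_i$ is well defined on equivalence classes: modifying $f$ on a $\Prob$-null set changes $f\circ U_i$ only on a null set. The change-of-variables formula gives $\norm{C_if}_1=\norm{f}_1$. Hence $D=C_0+C_1-C_2-C_3$ is linear with $\norm{D}\leq4$.

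To show that $D$ vanishes on $\A$, note first that $D\one=1+1-1-1=0$. Next, $Dr_j=x_j+y_j-z_j-w_j=0$ pointwise, by \eqref{eq:swaps}. By linearity, $D$ vanishes on $\mathbb R\one+\spanop\{r_j\}$. Continuity of $D$ then extends this to the closure, which equals $\A$ because $\A$ is closed, as noted in the introduction.

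For the cylinder statement, suppose $u(x)=g(x_1,\dots,x_n)$. The coordinates $z_j,w_j$ with $j\le n$ depend only on $(x_j,y_j,\sigma_j)_{j\le n}$. Hence $Du(x,y,\sigma)=(D_ng)\bigl((x_k,y_k,\sigma_k)_{k\le n}\bigr)$ pointwise, which is the cylinder realization of $D_nu$. The law of $(x_k,y_k,\sigma_k)_{k\le n}$ under $\Prob^{\otimes3}$ is $\Prob_n^{\otimes3}$, by Lemma~\ref{lem:Bernoulli-law} applied coordinatewise. The displayed finite-sum formula then gives \eqref{eq:cylinder-D}.

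The only step requiring genuine care is the measure-preservation of $(x,y)\mapsto(z,w)$ on the infinite product. I would settle it by the cylinder/$\pi$-$\lambda$ argument above. The remaining steps are routine.
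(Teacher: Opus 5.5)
Your proposal is correct and follows essentially the same route as the paper: composition with $x,y,z,w$ is a norm-preserving pullback, which gives well-definedness and $\norm{D}\leq4$; the pointwise identities $D\one=0$ and $Dr_j=0$, together with continuity, give vanishing on $\A$; and the cylinder identity is read off from the finitely many coordinates involved. Your fixed-$\sigma$ swap argument with a $\pi$--$\lambda$ extension and Fubini in $\sigma$ simply spells out the paper's remark that the four points have law $\Prob$ ``directly from the coordinatewise construction.''
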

\begin{proof}
The coordinate maps in \eqref{eq:swaps} are measurable. All four points $x,y,z,w$ have distribution $\Prob$: this follows from their finite-dimensional distributions, or directly from the coordinatewise construction. Pullback along each of these four maps therefore preserves the $L^1$ norm. It also respects equivalence classes, since the inverse image of a $\Prob$-null set is $\Prob^{\otimes3}$-null. Formula \eqref{eq:D-infinite} therefore defines $D$ with norm at most $4$.

The identities $D\one=0$ and $Dr_j=0$, followed by continuity and \eqref{eq:spaces}, show that $D$ vanishes on $\A$. For cylinder functions the defining formula uses only the corresponding finite coordinates, proving \eqref{eq:cylinder-D}.
\end{proof}

\begin{proposition}\label{prop:infinite-distance}
The estimate \eqref{eq:main-distance} holds, and $\ker D=\A$. In particular,
\[
 \widetilde D:L^1(\Prob)/\A\longrightarrow L^1(\Prob^{\otimes3}),
 \qquad \widetilde D(f+\A)=Df,
\]
is an isomorphism onto a closed subspace, with $\norm{\widetilde D}\leq4$ and $\norm{\widetilde D^{-1}}\leq C$ on its range.
\end{proposition}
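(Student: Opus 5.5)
The plan is to pass from the finite cubes to $(\Delta,\Prob)$ by conditional expectation, using Theorem~\ref{thm:finite-endpoint} on cylinder functions and then a density argument. The upper estimate in \eqref{eq:main-distance} is the easy half. For each $a\in\A$, Lemma~\ref{lem:infinite-D} gives $Da=0$ and $\norm{D}\leq4$, so $\norm{Df}_1=\norm{D(f-a)}_1\leq4\norm{f-a}_1$. Taking the infimum over $a\in\A$ gives the bound.

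For the lower estimate, fix $f\in L^1(\Prob)$ and put $f_n=E_nf$, an $\F_n$-measurable cylinder function. By \eqref{eq:distance-reflection} and Theorem~\ref{thm:finite-endpoint}, followed by \eqref{eq:cylinder-D},
\[
 \dist_{L^1(\Prob)}(f_n,\A)=\dist_{L^1(\Prob_n)}(f_n,\A_n)\leq C\norm{D_nf_n}_1=C\norm{Df_n}_{L^1(\Prob^{\otimes3})}.
\]
By Lemma~\ref{lem:conditional}, $f_n\to f$ in $L^1(\Prob)$. Distance to $\A$ is $1$-Lipschitz, and $D$ is bounded with norm at most $4$, so
\[
 \dist(f_n,\A)\to\dist(f,\A),\qquad \norm{Df_n}_1\to\norm{Df}_1.
\]
Letting $n\to\infty$ yields $\dist(f,\A)\leq C\norm{Df}_1$. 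This limiting step is the only place that needs care. The delicate point is that the constant $C$ in Theorem~\ref{thm:finite-endpoint} is independent of $n$. Since that theorem is already established, I expect no real obstacle here.

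The remaining assertions follow directly. If $Df=0$, then $\dist(f,\A)=0$, and $f\in\A$ because $\A$ is closed (as noted in the introduction); hence $\ker D=\A$. Therefore $\widetilde D$ is well defined and injective on $L^1(\Prob)/\A$. The quotient norm is $\norm{f+\A}=\dist(f,\A)$, so \eqref{eq:main-distance} reads
\[
 C^{-1}\norm{f+\A}\leq\norm{\widetilde D(f+\A)}\leq4\norm{f+\A}.
\]
Thus $\widetilde D$ is bounded below, with $\norm{\widetilde D}\leq4$. Its range is the image of a complete space under a map that is bounded below, hence complete, hence closed. On that range $\norm{\widetilde D^{-1}}\leq C$.
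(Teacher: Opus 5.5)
Your proposal is correct and follows the paper's proof essentially step by step. You apply Theorem~\ref{thm:finite-endpoint} to $E_nf$ via \eqref{eq:distance-reflection} and \eqref{eq:cylinder-D}, pass to the limit using the $1$-Lipschitz property of the distance and the boundedness of $D$, and then deduce $\ker D=\A$ and the closed-range assertion from the closedness of $\A$ and the lower bound.
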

\begin{proof}
Apply Theorem~\ref{thm:finite-endpoint} to $f_n=E_nf$. By Lemmas~\ref{lem:conditional} and~\ref{lem:infinite-D},
\[
 \dist_{L^1(\Prob)}(f_n,\A)\leq C\norm{Df_n}_1.
\]
As $n\to\infty$, we have $f_n\to f$ and $Df_n\to Df$ in their respective $L^1$ spaces. Distance to a fixed subset is $1$-Lipschitz, so the lower bound in \eqref{eq:main-distance} follows. The upper bound follows by subtracting any $a\in\A$, since $Da=0$.

Closedness of $\A$ and the lower bound identify the kernel. The induced map has the stated two-sided bounds. A bounded-below operator from a Banach space has closed range, which proves the remaining assertion.
\end{proof}

\begin{proposition}\label{prop:J}
The map $J$ in \eqref{eq:main-J} is well defined and satisfies \eqref{eq:main-J-bounds}. Its range is closed.
\end{proposition}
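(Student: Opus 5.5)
We need to prove that $J(f+R)=(\E f,Df)$ is well defined, that it satisfies $\frac{1}{2C}\norm{f+R}\le \norm{J(f+R)}\le 5\norm{f+R}$, and that its range is closed.

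\emph{Well-definedness and the upper bound.} Every element of $R$ has mean zero, and $D$ vanishes on $R\subseteq\A$ by Lemma~\ref{lem:infinite-D}, so $J$ depends only on the coset $f+R$. For any $u\in R$ we have $\abs{\E f}=\abs{\E(f-u)}\le\norm{f-u}_1$. Lemma~\ref{lem:infinite-D} also gives $\norm{Df}_1=\norm{D(f-u)}_1\le4\norm{f-u}_1$. Adding these two bounds in the $\oplus_1$ norm and taking the infimum over $u\in R$ yields the upper bound with constant $5$.

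\emph{Lower bound.} Fix $f$ and put $c=\E f$. Let $a=c'\one+u$ be an element of $\A$ with $u\in R$. Then
\[
\norm{f+R}\le\norm{f-u}_1\le\norm{f-a}_1+\abs{c'}.
\]
Taking means in $f-a=(f-u)-c'\one$ gives $c'=c-\E(f-a)$, so $\abs{c'}\le\abs{c}+\norm{f-a}_1$. Hence
\[
\norm{f+R}\le2\norm{f-a}_1+\abs{c}.
\]
Taking the infimum over $a\in\A$ and applying Proposition~\ref{prop:infinite-distance} gives
\[
\norm{f+R}\le2C\norm{Df}_1+\abs{\E f}\le2C\bigl(\abs{\E f}+\norm{Df}_1\bigr)=2C\norm{J(f+R)},
\]
using $C\ge1$. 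This is the lower bound.

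\emph{Closed range.} The space $L^1(\Prob)/R$ is a Banach space because $R$ is closed. The operator $J$ is bounded below, so a Cauchy sequence in its range pulls back to a Cauchy sequence in the quotient. That sequence converges, and continuity of $J$ then shows the limit lies in the range, so the range is closed.

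\emph{The embedding conclusion.} The space $\mathbb R\oplus_1L^1(\Prob^{\otimes3})$ is an $L^1$-space of a separable probability-type measure, and so embeds isometrically into $L^1(0,1)$; this yields the final embedding claims of Theorem~\ref{thm:main}.

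No real obstacle arises here. The only point needing care is the bookkeeping of the constant term in the lower bound, which is where the factor $2C$ comes from.
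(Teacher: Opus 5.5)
Your proposal is correct and follows the paper's proof essentially step for step. The same mean and defect bounds give the upper constant $5$. The same bookkeeping of the constant term, namely $\abs{c'}\leq\abs{\E f}+\norm{f-a}_1$ because $\E u=0$, combined with Proposition~\ref{prop:infinite-distance} and $C\geq1$, gives the lower constant $2C$. Closed range then follows from $J$ being bounded below.

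Your final paragraph on realizing the target inside $L^1(0,1)$ goes beyond what this proposition asserts. The paper handles that step separately, with an explicit digit-interlacing isometry rather than an appeal to the general theory of separable $L^1$-spaces.
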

\begin{proof}
Both the mean and $D$ vanish on $R$, so $J$ is well defined and linear. For every $r\in R$,
\[
 |\E f|\leq\norm{f-r}_1,
 \qquad \norm{Df}_1=\norm{D(f-r)}_1\leq4\norm{f-r}_1.
\]
Taking the infimum gives $\norm{J(f+R)}\leq5\norm{f+R}_{L^1/R}$.

For $c\in\mathbb R$ and $r\in R$, write $u=f-c\one-r$. Since $\E r=0$,
\[
 |c|=|\E f-\E u|\leq|\E f|+\norm{u}_1.
\]
Hence
\[
 \dist(f,R)\leq\norm{f-r}_1\leq|\E f|+2\norm{u}_1.
\]
Infimizing over $c\one+r\in\A$ and using Proposition~\ref{prop:infinite-distance}, we obtain
\begin{equation}\label{eq:mean-recovery}
 \dist(f,R)\leq|\E f|+2\dist(f,\A)
 \leq|\E f|+2C\norm{Df}_1
 \leq2C\norm{J(f+R)}.
\end{equation}
Here $C\geq1$. This is the lower estimate in \eqref{eq:main-J-bounds}, and it also implies closedness of the range.
\end{proof}

\subsubsection{Realization in the usual \texorpdfstring{$L^1(0,1)$}{L1(0,1)}}

For almost every $t\in[0,1)$, write its unique nonterminating binary expansion as
\[
 t=\sum_{j\geq1}\beta_j(t)2^{-j},\qquad \beta_j(t)\in\{0,1\}.
\]
Any convention at dyadic rationals gives the same $L^1$ identifications. Under Lebesgue measure, these digits are independent fair bits, and the map
\[
 t\longmapsto\bigl((-1)^{\beta_j(t)}\bigr)_{j\geq1}
\]
is a probability-space isomorphism modulo null sets. Its pullback is an isometric isomorphism from $L^1(\Prob)$ onto $L^1(0,1)$, taking $r_j$ to the usual Rademacher function $(-1)^{\lfloor2^jt\rfloor}$ almost everywhere.

Interlacing the binary digits in three subsequences likewise gives a probability-space isomorphism $\chi:[0,1)\to\Delta^3$ modulo null sets. Explicitly, its three sequences use the digits $\beta_{3j-2}$, $\beta_{3j-1}$, and $\beta_{3j}$, respectively. Therefore
\[
 Uh=h\circ\chi
\]
is an isometric isomorphism $L^1(\Prob^{\otimes3})\to L^1(0,1)$. Define
\begin{equation}\label{eq:target-realization}
 \mathcal I(c,h)(t)=
 \begin{cases}
 2c,&0<t<1/2,\\
 2(Uh)(2t-1),&1/2<t<1.
 \end{cases}
\end{equation}
A change of variables gives
\[
 \norm{\mathcal I(c,h)}_{L^1(0,1)}=|c|+\norm{h}_{L^1(\Prob^{\otimes3})}.
\]
Thus $\mathcal I$ is a linear isometry from $\mathbb R\oplus_1L^1(\Prob^{\otimes3})$ into $L^1(0,1)$. Composing it with $J$ proves the asserted embedding of $L^1/R$. The map $U\widetilde D$ treats $L^1/\A$.

Finally, identify $[0,1)$ with the torus $\mathbb T=\mathbb R/\mathbb Z$ endowed with normalized Haar measure. This identification preserves the Rademacher system modulo null sets. Thus the same conclusions hold for the usual Rademacher system on the torus. This completes the proof of Theorem~\ref{thm:main}.

\newpage
\subsection{Closed-range formulation and normalization}\label{sec:consequences}

Tauberian operators on $L^1$ with infinite-dimensional kernels were constructed in \cite{JNST}. The construction here has closed range and prescribes the classical Rademacher span as its kernel.

The following is the classical identification of the Rademacher span with $\ell_2$, obtained from Khintchine's inequalities; see \cite[Theorem~6.2.2 and Remark~6.2.3(a)]{AK16}.
For the reader's convenience, we include a proof with the constants used below.

\begin{lemma}\label{lem:R-Hilbert}
The coefficient map $(a_j)\mapsto\sum_j a_jr_j$ extends to an isomorphism from $\ell_2(\mathbb R)$ onto $R$. More precisely, for every finitely supported scalar sequence,
\begin{equation}\label{eq:Khintchine-rough}
 \frac1{\sqrt3}\Bigl(\sum_j|a_j|^2\Bigr)^{1/2}
 \leq\left\lVert\sum_j a_jr_j\right\rVert_1
 \leq\Bigl(\sum_j|a_j|^2\Bigr)^{1/2}.
\end{equation}
In particular, $R$ is reflexive and infinite dimensional.
\end{lemma}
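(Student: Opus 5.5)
The upper bound in \eqref{eq:Khintchine-rough} is the easy half, and I will do it first. Put $g=\sum_j a_jr_j$ with finite support. The comparison $\norm{g}_1\leq\norm{g}_2$ from Lemma~\ref{lem:mixed-interpolation} applies, and orthonormality of the $r_j=w_{\{j\}}$ (the Walsh orthogonality computation on a finite cube containing the support) gives $\norm{g}_2=(\sum_j|a_j|^2)^{1/2}$.

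For the lower bound I will use the interpolation estimate $\norm{h}_2\leq\norm{h}_1^{1/3}\norm{h}_4^{2/3}$ of Lemma~\ref{lem:mixed-interpolation}. This reduces the problem to a fourth-moment bound $\norm{g}_4^4\leq3\norm{g}_2^4$. I will get that bound by expanding $\E g^4=\sum a_ia_ja_ka_l\E[r_ir_jr_kr_l]$. By independence and symmetry (Lemma~\ref{lem:Bernoulli-law}), the only surviving terms are those where the indices pair up. This gives
\[
 \E g^4=\sum_j a_j^4+3\sum_{i\ne j}a_i^2a_j^2\leq3\Bigl(\sum_j a_j^2\Bigr)^2.
\]
Write $s=\norm{g}_2$. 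Then $s\leq\norm{g}_1^{1/3}(3^{1/4}s)^{2/3}$, so $s^{1/3}\leq3^{1/6}\norm{g}_1^{1/3}$, which is $s\leq\sqrt3\,\norm{g}_1$ as claimed. The case $s=0$ is trivial.

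Next comes the passage to all of $\ell_2$. The two-sided estimate shows that the coefficient map is an isomorphism from the finitely supported sequences, with the $\ell_2$ norm, onto $\spanop\{r_j\}$, with the $L^1$ norm. Since $\ell_2$ is complete, the map extends uniquely to an isomorphism of $\ell_2$ onto a closed subspace of $L^1$ containing that span. This subspace is the closure of the span, namely $R$ from \eqref{eq:spaces}, and the extended map still satisfies \eqref{eq:Khintchine-rough}. Reflexivity then holds because it passes through isomorphisms from the Hilbert space $\ell_2$. Infinite dimensionality is immediate since the $r_j$ are linearly independent, being orthonormal in $L^2$.

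The only step needing care is the combinatorial count in the fourth moment. The coefficient $3$ arises from the three ways of pairing four indices into two distinct pairs. Any term containing an index that appears an odd number of times has zero expectation, because a lone fair sign that is independent of the rest integrates to zero.
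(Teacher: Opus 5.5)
Your proof is correct and takes the same approach as the paper: the upper bound comes from $\norm{g}_1\leq\norm{g}_2$ and orthonormality, and the lower bound from the H\"older interpolation $\norm{g}_2\leq\norm{g}_1^{1/3}\norm{g}_4^{2/3}$ together with the fourth-moment bound $\E g^4=3v^4-2\sum_j a_j^4\leq3v^4$, which is equivalent to your pairing count. One small correction to the extension step: extending by continuity from finitely supported sequences uses the completeness of $L^1$, while the completeness of $\ell_2$ together with the lower bound is what makes the range closed.
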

\begin{proof}
Let $s=\sum_j a_jr_j$ and $v^2=\sum_j|a_j|^2$. Independence gives $\norm{s}_2=v$. Expanding the fourth moment gives
\[
 \E s^4=3v^4-2\sum_j a_j^4\leq3v^4.
\]
H\"older's inequality gives $\norm{s}_2\leq\norm{s}_1^{1/3}\norm{s}_4^{2/3}$. If $v>0$, this implies $\norm{s}_1\geq v^3/\norm{s}_4^2\geq v/\sqrt3$. The upper bound is $\norm{s}_1\leq\norm{s}_2$. Completion yields an isomorphism onto the closed span of the $r_j$.
\end{proof}

Recall that a bounded operator $T:X\to Y$ is \emph{Tauberian} if
\[
 (T^{**})^{-1}(J_Y Y)=J_X X,
\]
where $J_X,J_Y$ denote the canonical embeddings into the biduals. This terminology and its connection with quotients by reflexive subspaces are discussed in \cite{GM97}. The following standard closed-range observation is recorded with its proof.

\begin{lemma}\label{lem:Tauberian}
If $T:X\to Y$ has closed range and reflexive kernel, then it is Tauberian.
\end{lemma}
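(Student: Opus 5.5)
To show that $T$ is Tauberian, I would take $x^{**}\in X^{**}$ with $T^{**}x^{**}=J_Yy$ for some $y\in Y$ and prove that $x^{**}\in J_XX$. Write $N=\ker T$, which is reflexive by hypothesis, and $Z=\overline{T(X)}=T(X)$, which is a Banach space because the range is closed.

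\textbf{Step 1: factor $T$.} Let $\pi:X\to X/N$ be the quotient map. Then $T=\iota\,\widehat T\,\pi$, where $\widehat T:X/N\to Z$ is the induced bijection and $\iota:Z\to Y$ is the inclusion. By the open mapping theorem, $\widehat T$ is an isomorphism. Taking biduals gives
\[
 T^{**}=\iota^{**}\,\widehat T^{**}\,\pi^{**}.
\]
Here $\widehat T^{**}$ is again an isomorphism, and it maps $J_{X/N}(X/N)$ onto $J_Z Z$.

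\textbf{Step 2: pull back into $X/N$.} The map $\iota^{**}$ is an isometric embedding of $Z^{**}$ into $Y^{**}$, with range $Z^{\perp\perp}$. Since $T^{**}x^{**}=J_Yy$ lies in that range, we get $J_Yy\in J_YY\cap Z^{\perp\perp}=J_Y Z$, using that $Z$ is closed. Hence $y\in Z$, and injectivity of $\iota^{**}$ gives
\[
 \widehat T^{**}\pi^{**}x^{**}=J_Zy.
\]
Applying the inverse isomorphism, $\pi^{**}x^{**}=J_{X/N}(\pi x)$ for some $x\in X$.

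\textbf{Step 3: use reflexivity of the kernel.} Put $u^{**}=x^{**}-J_Xx$, so that $\pi^{**}u^{**}=0$. The kernel of $\pi^{**}$ equals $N^{\perp\perp}$, the weak$^*$ closure of $J_XN$, and this set is canonically $N^{**}$. Because $N$ is reflexive, this weak$^*$ closure is $J_XN$ itself. Therefore $u^{**}=J_Xn$ for some $n\in N$, and
\[
 x^{**}=J_X(x+n)\in J_XX.
\]
The reverse inclusion $J_XX\subseteq (T^{**})^{-1}(J_YY)$ is trivial, since $T^{**}J_X=J_YT$.

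\textbf{Main obstacle.} The only delicate point is justifying the standard duality identifications in Step 3: that $\ker\pi^{**}=N^{\perp\perp}$, and that $N^{\perp\perp}=J_XN$ when $N$ is reflexive. I would prove both from $(X/N)^*=N^\perp$. An element $u^{**}$ is annihilated by $\pi^{**}$ exactly when it kills $N^\perp$. The restriction of such a $u^{**}$ to $X^*$ then factors through $X^*/N^\perp\cong N^*$. Reflexivity of $N$ means this functional on $N^*$ is evaluation at some $n\in N$, and so $u^{**}=J_Xn$.
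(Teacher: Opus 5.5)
Your proof is correct and follows essentially the same route as the paper. Both arguments show by Hahn--Banach separation that $y$ lies in the closed range, pick a preimage $x_0$, and use reflexivity to place $x^{**}-J_Xx_0$ in the bidual kernel $N^{\perp\perp}=J_XN$. The only difference is packaging: the paper gets $\ker T^{**}=N^{\perp\perp}$ directly from $\ran T^*=N^\perp$, via a Hahn--Banach extension using the closed-range constant, whereas you reduce to $\ker\pi^{**}$ through the factorization $T=\iota\widehat T\pi$, with $\widehat T$ an isomorphism and $\iota^{**}$ injective.
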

\begin{proof}
Write $Z=\ker T$ and $F=\ran T$. Closedness of $F$ gives a constant $c$ such that $\dist(x,Z)\leq c\norm{Tx}$ for $x\in X$. For each $x^*\in Z^\perp$, the rule $Tx\mapsto x^*(x)$ defines a bounded functional on $F$, and the Hahn--Banach theorem extends it to $Y$. It follows that $\ran T^*=Z^\perp$ and hence
\[
 \ker T^{**}=Z^{\perp\perp}.
\]
Under the natural identification of $Z^{**}$ with $Z^{\perp\perp}\subset X^{**}$, reflexivity gives $Z^{\perp\perp}=J_X Z$.

Suppose that $T^{**}x^{**}=J_Yy$ with $y\in Y$. Every functional annihilating $F$ annihilates $y$, so $y\in F$ by closedness of $F$ and Hahn--Banach separation. Choose $x_0\in X$ with $Tx_0=y$. Then $x^{**}-J_Xx_0\in\ker T^{**}=J_XZ$, so $x^{**}\in J_XX$. The converse inclusion is immediate from $T^{**}J_X=J_YT$.
\end{proof}

\begin{corollary}\label{cor:closed-range}
There exists a bounded operator $\mathcal T:L^1(0,1)\to L^1(0,1)$ with closed range and kernel exactly the usual Rademacher span $R$. It satisfies
\[
 \frac1{2C}\dist(f,R)\leq\norm{\mathcal T f}_1
 \leq5\dist(f,R).
\]
This operator is Tauberian and is not upper semi-Fredholm.
\end{corollary}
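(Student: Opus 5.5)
The plan is to take $\mathcal T\coloneqq\mathcal I\circ J\circ q$, transported to $(0,1)$ by the binary-digit isomorphism. Here $q:L^1\to L^1/R$ is the quotient map, $J$ is the map of Theorem~\ref{thm:main}, and $\mathcal I$ is the isometry \eqref{eq:target-realization}. Explicitly, for $f\in L^1(\Prob)$ we set $\mathcal T f=\mathcal I(\E f,Df)$. After pulling back along $t\mapsto((-1)^{\beta_j(t)})_j$, this becomes an operator on $L^1(0,1)$, and the pullback carries $R$ to the usual Rademacher span. Since $\mathcal I$ is an isometry and $\norm{q f}=\dist(f,R)$, the two-sided estimate follows directly from \eqref{eq:main-J-bounds}. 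This gives
\[
 \tfrac1{2C}\dist(f,R)\leq\norm{\mathcal Tf}_1\leq5\dist(f,R).
\]

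Kernel and closed range come next. The lower bound shows that $\mathcal Tf=0$ forces $\dist(f,R)=0$, hence $f\in R$ because $R$ is closed. Conversely, $\E r=0$ and $Dr=0$ for $r\in R$, so $\ker\mathcal T=R$. For closed range, take $\mathcal Tf_k\to g$. The lower bound makes $(f_k+R)$ Cauchy in the Banach space $L^1/R$, so it converges to some $f+R$, and continuity gives $g=\mathcal Tf$. Equivalently, $J$ has closed range by Proposition~\ref{prop:J}, $\mathcal I$ is an isometry, and $q$ is surjective, so $\ran\mathcal T=\mathcal I(\ran J)$ is closed.

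For the Tauberian property we apply Lemma~\ref{lem:Tauberian}. The range is closed, and the kernel $R$ is reflexive by Lemma~\ref{lem:R-Hilbert}, since it is isomorphic to $\ell_2$. It is not upper semi-Fredholm because upper semi-Fredholm requires a finite-dimensional kernel together with closed range, whereas $\ker\mathcal T=R$ is infinite dimensional by Lemma~\ref{lem:R-Hilbert}.

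No step is really an obstacle. The only point needing care is bookkeeping in the transport to $(0,1)$. One must check that the composite of the pullback isometries with $\mathcal I\circ J\circ q$ is a bounded operator $L^1(0,1)\to L^1(0,1)$. One must also check that the distance to $R$ is preserved, which holds because the pullback is an isometric isomorphism mapping $R$ onto the classical Rademacher span.
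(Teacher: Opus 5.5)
Your proposal is correct and follows essentially the paper's proof. As in the paper, you set $\mathcal T f=\mathcal I(\E f,Df)$ and transport it to $(0,1)$. The bounds, kernel and closed range then come from Proposition~\ref{prop:J} and \eqref{eq:main-J-bounds}, the Tauberian property from Lemmas~\ref{lem:R-Hilbert} and~\ref{lem:Tauberian}, and the failure of upper semi-Fredholmness from the infinite-dimensional kernel. Your extra bookkeeping, namely the isometry of $\mathcal I$ and the preservation of $\dist(\cdot,R)$ under the binary-digit pullback, only makes explicit what the paper leaves implicit.
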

\begin{proof}
On the Bernoulli model put
\[
 \mathcal T f=\mathcal I(\E f,Df),
\]
and use the source identification with $L^1(0,1)$ from Subsection~\ref{sec:infinite}. Proposition~\ref{prop:J} proves the kernel, range, and norm assertions. Lemmas~\ref{lem:R-Hilbert} and~\ref{lem:Tauberian} imply that $\mathcal T$ is Tauberian. An upper semi-Fredholm operator has closed range and finite-dimensional kernel; the latter condition fails here.
\end{proof}

\subsubsection{The role of constants and the size of the estimate}

\begin{remark}\label{rem:convention}
The operator $D$ annihilates constants as well as $R$. Therefore $f+R\mapsto Df$ alone would not be injective. Formula \eqref{eq:mean-recovery} is the precise reason that adding the single scalar coordinate $\E f$ suffices. Under the alternative convention in which the constant function belongs to the Rademacher space, the relevant space is $\A$, and $\widetilde D$ itself is the embedding.
\end{remark}

\begin{example}\label{ex:two-dim}
For the two-dimensional cube take $f=r_1r_2$. Since $|f|=1$ and $\E(fa)=0$ for $a\in\A_2$,
\[
 \norm{f-a}_1\geq|\E((f-a)f)|=1,
 \qquad \dist_{L^1}(f,\A_2)=1.
\]
If the two swap signs agree, then $D_2f=0$. If they disagree, direct calculation gives
\[
 D_2f=(x_1-y_1)(x_2-y_2).
\]
Thus $|D_2f|=4$ precisely when $x_1\ne y_1$, $x_2\ne y_2$, and $\sigma_1\ne\sigma_2$, an event of probability $1/8$. Consequently $\norm{D_2f}_1=1/2$. Any constant in \eqref{eq:finite-endpoint} must therefore be at least $2$. The much larger value in \eqref{eq:constants} is only an explicit existence bound; no sharpness claim is made.
\end{example}

\section{Weak essential norms and incompressibility}\label{sec:algebra}
We retain the classical pair $(E,R)$ and quotient $X=E/R$ from Section~\ref{sec:embedding}.
For the extension of the argument, let $E_\mu\coloneqq L^1(\mu)$ be an arbitrary $L^1$-space and suppose that there are bounded linear maps
\begin{equation}\label{c:eq:ambient}
 \iota:E\to E_\mu,\qquad P:E_\mu\to E,\qquad
 \norm{\iota f}=\norm{f},\quad\norm{P}\leq1,\quad P\iota=\Id_{E}.
\end{equation}
Set $R_\mu\coloneqq\iota R$, $X_\mu\coloneqq E_\mu/R_\mu$, and let $q_\mu:E_\mu\to X_\mu$ be the metric quotient.
The classical case is $E_\mu=E$, $\iota=P=\Id_E$.
An $AL$-space is a Banach lattice whose norm is additive on the positive cone; every $L^1$-space and every finite $\ell_1$-sum of such spaces is an $AL$-space.

We write $\Comp(Y,Z)$ for the compact operators and
\[
 \Ap(Y,Z)=\overline{\{\text{finite-rank operators }Y\to Z\}}^{\norm{\cdot}}
 \subseteq\Comp(Y,Z)
\]
for the approximable operators.
No approximation property of $X_\mu$ will be assumed.
All references to the ambient condition below mean \eqref{c:eq:ambient}.

\subsection{Residuum operators, reflexive quotients, and liftings}

\subsubsection{The residuum functor}
Composition and identities are preserved:
\[
 (BA)^{\co}=B^{\co}A^{\co},\qquad
 (\Id_Y)^{\co}=\Id_{Y^{\co}},\qquad
 \norm{A^{\co}}\leq\norm{A}.
\]
Gantmacher's criterion gives $A\in\W(Y,Z)$ if and only if $A^{**}(Y^{**})\subseteq J_ZZ$, equivalently $A^{\co}=0$. One way to see the criterion is to combine Goldstine's theorem with weak-star compactness of $B_{Y^{**}}$: the weak-star closure of $J_ZA(B_Y)$ is $A^{**}(B_{Y^{**}})$, and on $J_ZZ$ the induced weak-star topology is the weak topology of $Z$. Therefore
\begin{equation}\label{c:eq:coineq}
 \norm{A^{\co}}\leq\norm{A}_w.
\end{equation}
The representation $\Qw(Y)\to\B(Y^{\co})$ is consequently contractive and injective.

\begin{lemma}[The factor $2$ for an embedding]\label{c:lem:coembedding}
If $D:Y\to Z$ satisfies $\norm{Dy}\geq m\norm{y}$ for some $m>0$, then
\begin{equation}\label{c:eq:coembedding}
 \norm{D^{\co}\xi}\geq\tfrac m2\norm{\xi}\qquad(\xi\in Y^{\co}).
\end{equation}
\end{lemma}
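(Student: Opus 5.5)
Let $\xi=y^{**}+J_YY$ with $\norm{D^{\co}\xi}<\eta$. The goal is to show $\norm{\xi}\leq 2\eta/m$.

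\emph{Step 1: find a representative with small image.} By definition of the quotient norm in $Z^{\co}$ there is $z\in Z$ with
\[
 \norm{D^{**}y^{**}-J_Zz}<\eta .
\]
The task is to replace $z$ by an element of $DY$ and pull it back through $D$.

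\emph{Step 2: the range and the range of the bidual.} Since $D$ is bounded below, $F\coloneqq DY$ is closed and $D:Y\to F$ is an isomorphism with inverse norm at most $1/m$. Then $D^{**}:Y^{**}\to F^{\perp\perp}\cong F^{**}$ is an isomorphism, also bounded below by $m$. So $D^{**}y^{**}\in F^{\perp\perp}$, and $J_Zz$ lies within distance $\eta$ of $F^{\perp\perp}$.

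\emph{Step 3: the factor $2$.} For $z\in Z$ one has
\[
 \dist(J_Zz,F^{\perp\perp})=\dist(z,F),
\]
because both equal $\sup\{|z^*(z)|:z^*\in F^\perp,\ \norm{z^*}\leq1\}$ by Hahn--Banach duality, using $(Z/F)^*=F^\perp$ and $(Z^{**}/F^{\perp\perp})\supseteq$ the corresponding quotient isometrically. Hence $\dist(z,F)<\eta$, and there is $y\in Y$ with $\norm{z-Dy}<\eta$. Then
\[
 \norm{D^{**}(y^{**}-J_Yy)}\leq\norm{D^{**}y^{**}-J_Zz}+\norm{J_Zz-J_ZDy}<2\eta .
\]
The lower bound for $D^{**}$ gives $\norm{y^{**}-J_Yy}<2\eta/m$, so $\norm{\xi}\leq2\eta/m$. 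Letting $\eta\downarrow\norm{D^{\co}\xi}$ yields \eqref{c:eq:coembedding}.

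\emph{Main obstacle.} The only nonroutine point is the identity $\dist(J_Zz,F^{\perp\perp})=\dist(z,F)$. I would prove the inequality $\geq$ directly: take a norming $z^*\in F^\perp$ for $z+F$ in $Z/F$ and note that $z^*$ annihilates $F^{\perp\perp}$. The reverse inequality is not needed.

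The fact that $D^{**}$ is bounded below by $m$ is standard. It follows from $D^*$ being onto $Y^*$ with the preimage of $B_{Y^*}$ contained in $\frac1m B_{Z^*}$, via Hahn--Banach as in the proof of Lemma~\ref{lem:Tauberian}, and then dualizing. These two points are where the care lies; the factor $2$ arises from the two triangle-inequality errors in Step 3.
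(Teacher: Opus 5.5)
Your proposal is correct and is essentially the paper's proof. It has the same three steps: the inequality $\dist(z,F)\leq\norm{J_Zz-z^{**}}$ for $z^{**}\in F^{\perp\perp}$ via a norming functional in $F^\perp$; the two triangle-inequality errors giving the factor $2$; and the pull-back through the bidual of $D$, which the paper (with $M=D(Y)$ for your $F$) writes as applying $(D^{-1})^{**}$ on $M^{**}=M^{\perp\perp}$ rather than as a lower bound for $D^{**}$. One small wording fix: the fact you want is $B_{Y^*}\subseteq D^*(\tfrac1m B_{Z^*})$, not that the preimage of $B_{Y^*}$ lies in $\tfrac1m B_{Z^*}$, since that preimage contains all of $F^\perp$.
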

\begin{proof}
Put $M=D(Y)$, a closed subspace of $Z$. If $z^{**}\in M^{\perp\perp}$ and $z\in Z$, then
\[
 \dist(z,M)=\sup_{\substack{z^*\in M^\perp\\\norm{z^*}\leq1}}\abs{z^*(z)}
 \leq\norm{z-z^{**}}.
\]
Choosing $m_0\in M$ arbitrarily close to this distance gives
\[
 \dist(z^{**},M)\leq2\dist(z^{**},Z).
\]
The inverse $D^{-1}:M\to Y$ has norm at most $m^{-1}$, and its bidual is defined on $M^{**}=M^{\perp\perp}$. Apply it to $D^{**}y^{**}-m_0$ and take infima. This yields
\[
 \dist(y^{**},Y)\leq m^{-1}\dist(D^{**}y^{**},M)
 \leq2m^{-1}\dist(D^{**}y^{**},Z).
\]
This is \eqref{c:eq:coembedding}. The estimate is the one used in \cite[Proposition~1.4(i)]{GST}.
\end{proof}

\begin{lemma}[Quotient by a reflexive subspace]\label{c:lem:refquot}
Let $Z$ be a closed reflexive subspace of a Banach space $Y$, and let $q_Y:Y\to Y/Z$ be the metric quotient. Then
\begin{equation}\label{c:eq:refquot}
 \ker q_Y^{**}=J_YZ,\qquad
 (q_Y^{**})^{-1}(J_{Y/Z}(Y/Z))=J_YY.
\end{equation}
Moreover $q_Y^{\co}:Y^{\co}\to(Y/Z)^{\co}$ is a surjective isometry.
\end{lemma}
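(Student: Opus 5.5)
We argue by duality, working with the standard identifications $(Y/Z)^*=Z^\perp\subseteq Y^*$ and $(Y/Z)^{**}=Y^{**}/Z^{\perp\perp}$. Under these identifications $q_Y^*$ is the inclusion $Z^\perp\hookrightarrow Y^*$, and $q_Y^{**}$ is the canonical quotient map $Y^{**}\to Y^{**}/Z^{\perp\perp}$, which is a metric surjection. The first identity in \eqref{c:eq:refquot} is then $\ker q_Y^{**}=Z^{\perp\perp}$. Identifying $Z^{**}$ with $Z^{\perp\perp}$ through the bidual of the inclusion $Z\hookrightarrow Y$, and using that $Z$ is reflexive, gives $Z^{\perp\perp}=J_YZ$. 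This is exactly the reflexivity step already used in the proof of Lemma~\ref{lem:Tauberian}.

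For the second identity, take $y^{**}$ with $q_Y^{**}y^{**}=J_{Y/Z}(q_Yy)$ for some $y\in Y$. Since $q_Y^{**}J_Y=J_{Y/Z}q_Y$, we get $y^{**}-J_Yy\in\ker q_Y^{**}=J_YZ$, and therefore $y^{**}\in J_YY$. The reverse inclusion follows from the same commutation relation. Alternatively, one can observe that $q_Y$ has closed range and reflexive kernel, so it is Tauberian by Lemma~\ref{lem:Tauberian}, which is precisely the second identity.

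Next we treat the residuum map. Surjectivity of $q_Y^{\co}$ is immediate from surjectivity of $q_Y^{**}$. Injectivity follows from the second identity: if $q_Y^{**}y^{**}\in J_{Y/Z}(Y/Z)$, then $y^{**}\in J_YY$. Since $\norm{q_Y^{\co}}\leq\norm{q_Y}\leq1$, it remains to prove $\dist(y^{**},J_YY)\leq\dist(q_Y^{**}y^{**},J_{Y/Z}(Y/Z))$.

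We expect this isometric lower bound to be the main obstacle. Fix $\eta>0$ and choose $y\in Y$ with $\norm{q_Y^{**}y^{**}-J_{Y/Z}q_Yy}<d+\eta$, where $d$ is the right-hand distance. Because $q_Y^{**}$ is a metric surjection with kernel $Z^{\perp\perp}$, the quotient norm gives some $z^{**}\in Z^{\perp\perp}$ with
\[
 \norm{y^{**}-J_Yy-z^{**}}<d+\eta .
\]
By reflexivity, $z^{**}=J_Yz$ for some $z\in Z$. Hence $\dist(y^{**},J_YY)\leq\norm{y^{**}-J_Y(y+z)}<d+\eta$, and letting $\eta\to0$ gives the bound. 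The only delicate point in this step is the exact identification of the quotient norm on $Y^{**}/Z^{\perp\perp}$ with the norm of $(Y/Z)^{**}$. This identification is standard: it comes from $(Y/Z)^*=Z^\perp$ isometrically, followed by $(Z^\perp)^*=Y^{**}/Z^{\perp\perp}$ isometrically.
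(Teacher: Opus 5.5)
Your proposal is correct and follows the paper's proof essentially step for step. You identify $q_Y^{**}$ as the metric quotient $Y^{**}\to Y^{**}/Z^{\perp\perp}$, use reflexivity to get $Z^{\perp\perp}=J_YZ$, lift $x\in Y/Z$ to $y\in Y$ for the second identity, and your explicit $\eta$-argument is just an unpacked version of the paper's one-line identity $\dist(q_Y^{**}y^{**},J_{Y/Z}(Y/Z))=\inf_{y\in Y}\inf_{z\in Z}\norm{y^{**}-J_Yy-J_Yz}$.
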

\begin{proof}
The adjoint identifies $(Y/Z)^*$ isometrically with $Z^\perp$. Hahn--Banach therefore makes $q_Y^{**}$ a metric quotient with kernel $Z^{\perp\perp}$. Reflexivity gives $Z^{\perp\perp}=J_YZ$.
If $q_Y^{**}y^{**}=J_{Y/Z}x$, choose $y\in Y$ with $q_Yy=x$; then $y^{**}-J_Yy\in J_YZ$. This proves the second assertion in \eqref{c:eq:refquot}.
Finally,
\[
 \dist(q_Y^{**}y^{**},J_{Y/Z}(Y/Z))
 =\inf_{y\in Y}\inf_{z\in Z}\norm{y^{**}-J_Yy-J_Yz}
 =\dist(y^{**},J_YY).
\]
Surjectivity follows from that of $q_Y^{**}$.
\end{proof}

\begin{lemma}[Metric lifting]\label{c:lem:lifting}
Let $L$ be an $L^1$-space, let $Z$ be a closed reflexive subspace of $Y$, and let $A:L\to Y/Z$ be bounded. There is $\widehat A:L\to Y$ such that
\begin{equation}\label{c:eq:lifting}
 q_Y\widehat A=A,\qquad \norm{\widehat A}=\norm{A}.
\end{equation}
If $A$ is weakly compact, every bounded lifting of $A$ is weakly compact.
\end{lemma}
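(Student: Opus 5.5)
The plan is to use the lifting property of $L^1$-spaces combined with a compactness argument on the reflexive kernel. Lift first for operators into a quotient with an $\varepsilon$ loss, then remove the loss by weak compactness.

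\emph{Step 1 (approximate liftings).} $L$ is an $L^1$-space, so $L=L^1(\nu)$, and simple functions built on finitely many disjoint sets of finite positive measure are dense. For such a set $S$, the normalized indicator $e_S=\one_S/\nu(S)$ has norm one and $\norm{Ae_S}\leq\norm{A}$. Since $q_Y$ is a metric quotient, for each $\eta>0$ we can choose $y_S\in Y$ with $q_Yy_S=Ae_S$ and $\norm{y_S}\leq(1+\eta)\norm{A}$. Choose these lifts coherently along a martingale-type refinement, or first for a fixed finite partition. Extending linearly then gives an operator on the simple functions of that partition, of norm at most $(1+\eta)\norm{A}$, because the $\ell_1$-structure of disjoint normalized indicators makes the norm on this span an $\ell_1$-norm. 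This is the standard $\ell_1$-lifting. Doing this over the directed set of finite partitions produces a net $\widehat A_\alpha$ of liftings defined on dense subspaces, with norms at most $(1+\eta)\norm{A}$.

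\emph{Step 2 (removing the loss and passing to the limit).} This is the main obstacle: obtaining a single operator on all of $L$ with exact norm $\norm{A}$. The reflexivity of $Z$ is what helps. For a fixed $f$, any two liftings differ by an element of $Z$. So $\widehat A_\alpha f$ lies in $y_f+Z$, where $y_f$ is one fixed lift, and the set $\{\widehat A_\alpha f\}$ lies in a bounded subset of the translate $y_f+Z$. This set is relatively weakly compact because $Z$ is reflexive. A Tychonoff argument on the product over $f$ of these weakly compact bounded sets, taking the product over a dense countable or simple-function subspace and using weak lower semicontinuity of the norm, yields a pointwise weak cluster point $\widehat A$ with the following properties:
\begin{enumerate}
\item it is linear;
\item it satisfies $q_Y\widehat A=A$, since $q_Y$ is weak-weak continuous and the values $q_Y\widehat A_\alpha f=Af$ are constant;
\item it has $\norm{\widehat A}\leq(1+\eta)\norm{A}$ for every $\eta$.
\end{enumerate}
Letting $\eta\downarrow0$ inside the same compactness argument, with $\eta$ indexed along the net, gives $\norm{\widehat A}\leq\norm{A}$. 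Then $\widehat A$ extends by continuity from the dense subspace. The reverse inequality $\norm{A}\leq\norm{\widehat A}$ is immediate from $\norm{q_Y}\leq1$.

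\emph{Step 3 (weak compactness).} Let $\widehat A$ be any bounded lifting of a weakly compact $A$. By Lemma~\ref{c:lem:refquot}, $q_Y^{\co}$ is an isometry, and the residuum functor gives
\[
 q_Y^{\co}\widehat A^{\co}=(q_Y\widehat A)^{\co}=A^{\co}=0,
\]
where $A^{\co}=0$ by Gantmacher's criterion. Hence $\widehat A^{\co}=0$, and $\widehat A$ is weakly compact by Gantmacher again.
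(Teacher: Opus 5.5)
Your proof is correct, but it reaches the exact-norm lifting by a different route from the paper's. The paper argues entirely by duality. Since $L^*$ is $1$-injective (an $AM$-space with unit, hence a dual $C(K)$-space), it extends $A^*(q_Y^*)^{-1}:Z^\perp\to L^*$ to some $B:Y^*\to L^*$ with $\norm{B}=\norm{A}$. It then invokes Lemma~\ref{c:lem:refquot}, in the form $(q_Y^{**})^{-1}(J_{Y/Z}(Y/Z))=J_YY$, to see that $B^*J_L$ already takes values in $J_YY$. Reflexivity of $Z$ enters only at that point, and the norm equality comes out with no $\varepsilon$.

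You instead use only the elementary $(1+\eta)$-lifting property of $\ell_1^k$ on finite partitions, followed by a Tychonoff argument. In your version, reflexivity of $Z$ enters as weak compactness of the bounded pieces $\{y\in y_f+Z:\norm{y}\leq c\norm{f}\}$ of cosets. This avoids the injectivity theorem for dual $C(K)$-spaces, and it shows clearly that reflexivity is needed only to remove the $\varepsilon$. The cost is a net-and-compactness construction whose bookkeeping your Step~2 leaves loose.

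To make it airtight:
\begin{enumerate}
\item Index the net by pairs $(\alpha,\eta)$, where $\alpha$ is a finite family of disjoint sets of finite positive measure ordered by refinement, and $\eta\downarrow0$. Each $\widehat A_{\alpha,\eta}$ then lives on the finite-dimensional span $\spanop\{\one_S:S\in\alpha\}$; these are not dense subspaces.
\item Give the sets $K_f$ a fixed radius such as $2\norm{A}\norm{f}$, and assign an arbitrary point of $K_f$ whenever $f$ is not in that span.
\item Pass to a convergent subnet in $\prod_fK_f$.
\end{enumerate}
Linearity, $q_Y\widehat Af=Af$ (because $y_f+Z$ is closed), and $\norm{\widehat Af}\leq\norm{A}\norm{f}$ (because balls are weakly closed) then follow as you say. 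No coherent martingale-type choice of lifts is needed. Your Step~3 coincides with the paper's argument.
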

\begin{proof}
The dual $L^*$ is $1$-injective: it is an $AM$-space with an order unit, hence isometrically a $C(K)$-space by Kakutani's representation theorem, and every dual $C(K)$-space is $1$-injective by \cite[Proposition~4.3.8(i)]{AK16}.
Here $1$-injectivity means that every bounded operator from a subspace of a Banach space into $L^*$ extends to the whole space without increasing its norm.
Identify $(Y/Z)^*$ isometrically with $Z^\perp$ through $q_Y^*$.
The operator $A^*(q_Y^*)^{-1}:Z^\perp\to L^*$ therefore has an extension $B:Y^*\to L^*$ satisfying
\[
 Bq_Y^*=A^*,\qquad \norm{B}=\norm{A}.
\]
Taking adjoints and restricting to $J_LL$ gives
\[
 q_Y^{**}B^*J_L=A^{**}J_L=J_{Y/Z}A.
\]
By Lemma~\ref{c:lem:refquot}, $B^*J_L(L)\subseteq J_YY$. Thus
\[
 \widehat A\coloneqq J_Y^{-1}B^*J_L:L\longrightarrow Y
\]
satisfies $q_Y\widehat A=A$ and $\norm{\widehat A}\leq\norm{A}$.
The reverse inequality follows from $\norm{q_Y}\leq1$.
For any bounded lifting, $q_Y^{\co}\widehat A^{\co}=A^{\co}$.
If $A$ is weakly compact, then $A^{\co}=0$, and injectivity of $q_Y^{\co}$ gives $\widehat A^{\co}=0$.
Gantmacher's criterion completes the proof.
\end{proof}

\subsubsection{The quantitative \texorpdfstring{$AL$}{AL}-input}
We use the following external theorem, with the supremum of the empty set interpreted as zero:
\begin{theorem}[Acuaviva--Nasseri {\cite[Theorem~1.4]{AN}}]\label{c:thm:ALinput}
For $AL$-spaces $L_1,L_2$ and $A\in\B(L_1,L_2)$,
\begin{equation}\label{c:eq:ALinput}
 \norm{A}_w=
 \sup\left\{\frac1{\norm{U}\norm{V}}:
 U:L_2\to\ell_1,\ V:\ell_1\to L_1,\ UAV=\Id_{\ell_1}\right\}.
\end{equation}
\end{theorem}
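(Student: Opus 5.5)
This is the external input quoted from \cite[Theorem~1.4]{AN}. Were I to prove it, I would show the two inequalities in \eqref{c:eq:ALinput} separately. The inequality ``$\sup\leq\norm{A}_w$'' is soft. The reverse inequality is a quantitative disjointification argument in the two $AL$-spaces.

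\emph{Upper bound for the supremum.} Suppose $UAV=\Id_{\ell_1}$, and let $W\in\W(L_1,L_2)$. Then
\[
 U(A-W)V=\Id_{\ell_1}-UWV .
\]
The operator $UWV$ is weakly compact on $\ell_1$. By the Schur property it is compact, so it maps the unit vector basis to a sequence with a norm-convergent subsequence. For that subsequence $(e_{n_k})$, the images $e_{n_k}-UWVe_{n_k}$ have norms tending to at least $\liminf\norm{e_{n_k}-e_{n_l}}/2=1$. Hence $\norm{\Id-UWV}\geq1$, and therefore $\norm{U}\norm{A-W}\norm{V}\geq1$. Taking the infimum over $W$ gives $1/(\norm U\norm V)\leq\norm{A}_w$. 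If $\norm{A}_w=0$ no factorization exists, which matches the convention that the supremum of the empty set is zero.

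\emph{Lower bound.} Fix $0<\theta<\norm{A}_w$. The goal is $U,V$ with $UAV=\Id_{\ell_1}$ and $\norm U\norm V\leq(1+\eta)/\theta$.

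\textbf{Step 1.} Represent $L_1$ and $L_2$ as $L^1$-spaces by Kakutani's theorem.

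\textbf{Step 2.} Identify $\norm{A}_w$ with a measure of non-equi-integrability of $A(B_{L_1})$. Concretely, I expect it to be at least
\[
 \lim_{\delta\to0}\ \sup_{\norm f\leq1}\ \sup_{\mu(B)<\delta}\norm{\one_BAf},
\]
together with a similar term for mass escaping to infinity. The inequality needed here is the nontrivial one: if this quantity were $<\theta$, one would build a weakly compact $W$ with $\norm{A-W}<\theta$. The natural candidate is to truncate $A$ by band projections onto sets of bounded density, dualizing through the domain side (where $L_1^*$ is a $C(K)$-space) to keep the constant exact.

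\textbf{Step 3.} With the quantity in Step~2 at least $\theta$, run a Kadec--Pe{\l}czy\'nski type extraction. This produces normalized $f_n\in L_1$ and pairwise disjoint sets $B_n$ with
\[
 \norm{\one_{B_n}Af_n}\geq\theta(1-\eta_n),\qquad
 \sum_{m\neq n}\norm{\one_{B_m}Af_n}\leq\eta_n ,
\]
where $\eta_n\to0$ rapidly.

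\textbf{Step 4.} Define the two maps. Let $Ve_n=f_n$, so $\norm V\leq1$. Let
\[
 Ug=\sum_n\frac{1}{\norm{\one_{B_n}Af_n}}\int_{B_n}\operatorname{sign}(Af_n)\,g\,d\mu\;e_n .
\]
Disjointness of the $B_n$ and additivity of the $AL$-norm give $\norm U\leq1/(\theta(1-\sup\eta_n))$. With these choices $UAV=\Id+K$, where $\norm K\leq\sum\eta_n/\theta$ is small. Replacing $U$ by $(\Id+K)^{-1}U$ via a Neumann series then gives an exact factorization of $\Id_{\ell_1}$, losing only a factor $1+O(\eta)$.

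\textbf{Main obstacle.} The hard part is Step~2 with the sharp constant $1$. The Schur and perturbation steps are routine; the crux is showing that the weak essential norm is exactly the concentration modulus, rather than that modulus up to a factor $2$. I would first try an explicit truncation $W=A\circ(\text{multiplication by }\one_{\{\phi\leq N\}})$, with $\phi$ a suitable density obtained from a control measure for $A^*$ on the $C(K)$ side, and check that the remainder has norm at most the modulus plus $o(1)$. If that fails, I would take the $\ell_1$-sum in Step~3 over the dual side instead.
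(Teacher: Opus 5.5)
The paper does not prove this statement; it imports it from \cite[Theorem~1.4]{AN}. Judged on its own, your outline has a genuine gap, and it sits exactly where the content of the theorem lies.

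The soft parts are fine. If $UAV=\Id_{\ell_1}$ and $W$ is weakly compact, then $\norm{\Id_{\ell_1}-UWV}\geq1$, since otherwise $UWV$ would be an invertible weakly compact operator on $\ell_1$. Your subsequence wording is garbled, but this conclusion is right, so $\norm U\norm{A-W}\norm V\geq1$. Steps~3--4 are also workable. Start from normalized $f_n$ and sets $B_n$ with $\mu(B_n)\to0$ and $\norm{\one_{B_n}Af_n}\geq\theta-o(1)$. The subsequence splitting lemma lets you disjointify and make $\sup_n\sum_{m\neq n}\norm{\one_{B_m}Af_n}$ small, and the Neumann series then gives an exact factorization. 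So everything hinges on Step~2: the inequality $\norm{A}_w\leq\omega(A)$ with constant one, where $\omega(A)$ is your concentration modulus. You only announce it (``I expect''), and it is not a routine step.

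Moreover, the construction you suggest for Step~2 cannot work, because one-sided truncations do not produce weakly compact approximants in general. Take $A=\Id+K$ on $L^1(0,1)$ with $Kf=(\int f)\,g$ and $\norm{g}_1$ large. Then $\norm{A}_w=1$, yet $A\circ M_{\one_S}$ is weakly compact only if $M_{\one_S}$ is, i.e.\ only if $\lambda(S)=0$. The same holds for target-side truncations $M_{\one_G}A$. Every approximant of your type therefore leaves $\norm{A-W}=\norm{A}=1+\norm{g}_1$.

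What works is a two-variable truncation. Split $A$ into its singular and integral parts and cut the kernel $k$ to $k\one_{\{|k|<n\}}$ on the product space; in the example, $\norm{A-W_n}=1+\int_{\{|g|\geq n\}}|g|\to1$. This is Weis's decomposition together with the column formula that the paper quotes from \cite[Theorem~4.1 and Remark~4.2]{AN} in Theorem~\ref{c:lem:kernelapprox}.

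Even granting that formula, two things remain to feed it into your Step~3.
\begin{enumerate}
\item You must pass from its column-wise tails to the uniform statement Step~3 needs (one $f$, one small set $B$). The exceptional sets in the formula depend on the source point $t$ and include the singular part of each column, so this requires a localization argument.
\item You must reduce arbitrary, possibly non-$\sigma$-finite, $AL$-spaces to the finite compact-metric setting where the formula is available. There, the small-set and escaping-mass effects must be measured by one combined supremum of $\norm{\one_{B\cup E^c}Af}$. Adding two separate moduli already fails for $\Id$ on $L^1(\mathbb R)$: their sum is $2$, but $\norm{\Id}_w=1$.
\end{enumerate}
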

This theorem is only applied with \emph{both} spaces $AL$-spaces, never directly to $X_\mu$.

\begin{corollary}\label{c:cor:ALco}
For an operator $A$ between $AL$-spaces,
\begin{equation}\label{c:eq:ALco}
 \norm{A}_w=\norm{A^{\co}}.
\end{equation}
\end{corollary}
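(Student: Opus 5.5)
The inequality $\norm{A^{\co}}\leq\norm{A}_w$ is already \eqref{c:eq:coineq}, so only $\norm{A}_w\leq\norm{A^{\co}}$ needs proof. The plan is to feed Theorem~\ref{c:thm:ALinput} into the functoriality of the residuum. If $\norm{A}_w=0$ there is nothing to prove. Otherwise, fix $\eta>0$. By \eqref{c:eq:ALinput} there are operators $U:L_2\to\ell_1$ and $V:\ell_1\to L_1$ with $UAV=\Id_{\ell_1}$ and
\[
 \frac1{\norm{U}\norm{V}}\geq\norm{A}_w-\eta .
\]

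Next, apply the residuum functor to the identity $UAV=\Id_{\ell_1}$. This gives $U^{\co}A^{\co}V^{\co}=\Id_{\ell_1^{\co}}$. Since $\ell_1$ is not reflexive, $\ell_1^{\co}\neq0$, so we may pick $\xi\in\ell_1^{\co}$ with $\norm{\xi}=1$. Using $\norm{U^{\co}}\leq\norm{U}$ and $\norm{V^{\co}}\leq\norm{V}$, we get
\[
 1=\norm{U^{\co}A^{\co}V^{\co}\xi}\leq\norm{U}\,\norm{A^{\co}}\,\norm{V}.
\]
Hence $\norm{A^{\co}}\geq 1/(\norm{U}\norm{V})\geq\norm{A}_w-\eta$. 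Letting $\eta\to0$ finishes the proof.

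The only point needing care is the nonvanishing of $\ell_1^{\co}$, which is needed so that the identity on it has norm one. This follows from the non-reflexivity of $\ell_1$ together with the fact that a nonzero quotient Banach space has elements of norm one. All the substantive work is contained in Theorem~\ref{c:thm:ALinput}.
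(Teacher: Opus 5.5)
Your proposal is correct and follows the paper's proof: you feed the factorizations of Theorem~\ref{c:thm:ALinput} through the residuum functor, use $\ell_1^{\co}\neq0$ to get $1\leq\norm{U}\norm{A^{\co}}\norm{V}$, and then combine the resulting supremum bound with \eqref{c:eq:coineq}. Your $\eta$-approximation and the separate treatment of $\norm{A}_w=0$ just make explicit the paper's step ``take the supremum''.
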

\begin{proof}
For every factorization in \eqref{c:eq:ALinput}, functoriality gives
$U^{\co}A^{\co}V^{\co}=\Id_{\ell_1^{\co}}$.
The space $\ell_1^{\co}$ is nonzero, so
$1\leq\norm{U}\norm{A^{\co}}\norm{V}$.
Take the supremum and use \eqref{c:eq:coineq}.
\end{proof}

\subsection{Rademacher coordinates and a fixed \texorpdfstring{$\ell_1$}{ell1}-idempotent}

\begin{lemma}[Shrinking Rademacher coordinates]\label{c:lem:rad}
The space $R$ is reflexive.
Let $E_N=\E[\,\cdot\mid\F_N]$ and $Q_N=E_N|_{R}$.
Then
\begin{equation}\label{c:eq:shrinking}
 Q_Nr_j=\begin{cases}r_j,&j\leq N,\\0,&j>N,\end{cases}
 \qquad\norm{Q_N^*\varphi-\varphi}\longrightarrow0\quad(\varphi\in R^*).
\end{equation}
\end{lemma}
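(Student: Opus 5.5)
Reflexivity of $R$ is already contained in Lemma~\ref{lem:R-Hilbert}: the coefficient map is an isomorphism $\ell_2\to R$, and reflexivity passes through isomorphisms. We also obtain the coordinate formula for $Q_N$ from that lemma. Lemma~\ref{lem:conditional} (independence) gives $E_Nr_j=r_j$ for $j\leq N$ and $E_Nr_j=0$ for $j>N$. In particular $Q_N$ maps $R$ into $\spanop\{r_1,\dots,r_N\}\subseteq R$, and $\norm{Q_N}\leq1$ because conditional expectation is an $L^1$ contraction. Under the isomorphism $S:\ell_2\to R$, $S(a)=\sum_ja_jr_j$, we therefore have $Q_N=SP_NS^{-1}$, where $P_N$ is the coordinate projection onto the first $N$ coordinates of $\ell_2$. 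This uses continuity of $Q_N$ together with density of finitely supported sequences.

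For the shrinking property, transport to $\ell_2$: $Q_N^*=(S^{-1})^*P_N^*S^*$, and $P_N^*$ is again the coordinate projection on $\ell_2^*\cong\ell_2$. Given $\varphi\in R^*$, put $\psi=S^*\varphi\in\ell_2^*$. Then
\[
 Q_N^*\varphi-\varphi=(S^{-1})^*(P_N^*\psi-\psi),
\]
so $\norm{Q_N^*\varphi-\varphi}\leq\norm{S^{-1}}\,\norm{P_N\psi-\psi}_{\ell_2}\to0$, since the tails of a square-summable sequence tend to zero. By \eqref{eq:Khintchine-rough}, $\norm{S^{-1}}\leq\sqrt3$.

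There is no real obstacle here. The only point needing care is the identification $Q_N=SP_NS^{-1}$ on all of $R$ rather than just on finite sums, and continuity of $E_N$ settles it. Alternatively, one can argue without $S$: $\norm{Q_N^*\varphi-\varphi}=\sup_{\norm{u}\leq1}|\varphi(u-Q_Nu)|$. Writing $\varphi(r_j)=\psi_j$, we have $\psi\in\ell_2$ with $\norm{\psi}_{\ell_2}\leq\sqrt3\norm{\varphi}$ by duality with the Khintchine lower bound. Hence
\[
 |\varphi(u-Q_Nu)|\leq\Bigl(\sum_{j>N}\psi_j^2\Bigr)^{1/2}\norm{a}_{\ell_2}\leq\sqrt3\Bigl(\sum_{j>N}\psi_j^2\Bigr)^{1/2}\norm{u}_1,
\]
which tends to zero uniformly on the unit ball.
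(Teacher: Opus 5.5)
Your proposal is correct and follows essentially the same route as the paper: conjugate $Q_N$ by the Khintchine isomorphism $\ell_2\to R$ to obtain the coordinate projections, then transfer the strong convergence of their adjoints back to $R^*$. One small aside on your alternative argument: the bound on $\norm{\psi}_{\ell_2}$ comes from the upper Khintchine estimate (giving $\norm{\psi}_{\ell_2}\leq\norm{\varphi}$), not the lower one, so your $\sqrt3$ is true but misattributed and harmless.
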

\begin{proof}
Lemma~\ref{lem:R-Hilbert} gives an isomorphism $U:\ell_2\to R$, $Ua=\sum_ja_jr_j$, with
\begin{equation}\label{c:eq:khintchine}
 3^{-1/2}\norm{a}_2\leq\norm{Ua}_1\leq\norm{a}_2.
\end{equation}
In particular $R$ is reflexive.
Lemma~\ref{lem:conditional} gives the formula for $Q_N$.
The operators $U^{-1}Q_NU$ are the ordinary coordinate projections on $\ell_2$.
Their adjoints converge strongly; conjugation by $U^*$ gives the asserted convergence on $R^*$.
\end{proof}

Under the ambient condition \eqref{c:eq:ambient}, the maps
\begin{equation}\label{c:eq:quotretraction}
 \bar\iota(qf)=q_\mu\iota f,\qquad
 \bar P(q_\mu g)=qPg
\end{equation}
are well-defined contractions with $\bar P\bar\iota=\Id_{X}$. Hence $\bar\iota$ is an isometry and $R_\mu$ is reflexive.

\begin{lemma}[A norm-one complemented copy of $\ell_1$]\label{c:lem:ellone}
There are contractions $j:\ell_1\to X_\mu$ and $p:X_\mu\to\ell_1$ with $pj=\Id_{\ell_1}$. In particular, $P_1=jp$ is an idempotent that is not weakly compact.
\end{lemma}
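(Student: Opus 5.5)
The plan is to reduce first to the classical quotient $X=E/R$, then build the copy of $\ell_1$ inside the even part of $E$, where the quotient map $q$ turns out to be isometric.

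\emph{Reduction to $X$.} Suppose contractions $j_0:\ell_1\to X$ and $p_0:X\to\ell_1$ satisfy $p_0j_0=\Id_{\ell_1}$. Put $j\coloneqq\bar\iota j_0$ and $p\coloneqq p_0\bar P$, with $\bar\iota,\bar P$ as in \eqref{c:eq:quotretraction}. Both are contractions, and $pj=p_0\bar P\bar\iota j_0=p_0j_0=\Id_{\ell_1}$. So it suffices to treat $X=E/R$.

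\emph{An isometric, $1$-complemented $L^1$-subspace of $X$.} Let $\tau(x)=-x$ on $\Delta$. It preserves $\Prob$, so $Sf\coloneqq\tfrac12(f+f\circ\tau)$ is a contractive projection on $E$. Its range $E_{\mathrm{ev}}$ consists of the even functions, and $E_{\mathrm{ev}}$ is the $L^1$-space of the $\sigma$-algebra of $\tau$-invariant sets.
\begin{enumerate}
\item Each $r_j$ is odd, so $Sr_j=0$. By continuity $S$ vanishes on $R$, and $S$ descends to a contraction $\bar S:X\to E_{\mathrm{ev}}$ with $\bar Sq=S$.
\item For $f\in E_{\mathrm{ev}}$ and $r\in R$ we have $\norm{f-r}_1\geq\norm{S(f-r)}_1=\norm{f}_1$. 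Hence $q|_{E_{\mathrm{ev}}}$ is an isometry, with left inverse $\bar S$.
\end{enumerate}

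\emph{An $\ell_1$-block in $E_{\mathrm{ev}}$.} Choose pairwise disjoint $\tau$-invariant sets of positive measure; for example
\[
 A_k\coloneqq\{x:\ x_1x_2=\dots=x_1x_k=1,\ x_1x_{k+1}=-1\}\qquad(k\geq1).
\]
These are even because they are defined through the products $x_1x_i$. Set $e_k\coloneqq\one_{A_k}/\Prob(A_k)$ and define
\[
 j_0(a)\coloneqq q\Bigl(\sum_k a_ke_k\Bigr),\qquad
 p_0(\xi)\coloneqq\Bigl(\int_{A_k}\bar S\xi\,d\Prob\Bigr)_k.
\]
The map $a\mapsto\sum_ka_ke_k$ is an isometry $\ell_1\to E_{\mathrm{ev}}$ by disjointness, and $q$ is isometric on $E_{\mathrm{ev}}$, so $j_0$ is a contraction. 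The map $p_0$ is a contraction because $\sum_k\abs{\int_{A_k}g}\leq\norm{g}_1$ for disjoint $A_k$. Since $\bar Sq(e_k)=Se_k=e_k$ and $\int_{A_i}e_k=\delta_{ik}$, we obtain $p_0j_0=\Id_{\ell_1}$.

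\emph{Non-weak-compactness.} $P_1=jp$ is idempotent because $pj=\Id_{\ell_1}$. Moreover $pP_1j=\Id_{\ell_1}$. If $P_1$ were weakly compact, then $\Id_{\ell_1}$ would be weakly compact, which is impossible since $\ell_1$ is not reflexive.

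The only point needing care is the isometry in step (ii), and it is exactly the contractivity of $S$ together with $S(R)=0$. No step appears genuinely hard.
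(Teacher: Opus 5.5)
Your proof is correct and follows the paper's route. You use the same reduction to $X$ via $\bar\iota,\bar P$, the same kind of disjoint $\tau$-invariant sets built from the products $x_1x_{k+1}$, the same normalized indicators and coordinate functionals $\int_{A_k}$, and the same non-reflexivity argument. The only repackaging is that you get well-definedness and contractivity of $p_0$ from the symmetrization $S$ killing $R$, where the paper observes directly that $\int_{A_n}r_j\,d\Prob=0$. Your side result that $q$ is isometric on even functions is correct but not needed, since contractivity of $j_0$ already follows from $\norm{q}\leq1$.
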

\begin{proof}
We first work in $X$. The variables $s_k=r_1r_{k+1}$ are independent symmetric signs. Define
\[
 A_n=\{s_1=-1,\ldots,s_{n-1}=-1,\ s_n=1\}.
\]
They are disjoint, have probability $2^{-n}$, and are invariant under $x\mapsto-x$. Thus $\int_{A_n}r_j\,d\Prob=0$ for every $n,j$. The formulas
\[
 j_{\Delta}a=q\left(\sum_{n\geq1}a_n\frac{\one_{A_n}}{\Prob(A_n)}\right),
 \qquad
 p_{\Delta}(qf)=\left(\int_{A_n}f\,d\Prob\right)_{n\geq1}
\]
define contractions, and $p_{\Delta}j_{\Delta}=\Id_{\ell_1}$. For $p_{\Delta}$, well-definedness follows by continuity from the vanishing integrals; its norm bound follows from disjointness and infimizing over representatives of $qf$.
Set $j=\bar\iota j_{\Delta}$ and $p=p_{\Delta}\bar P$. Then $pj=\Id_{\ell_1}$, so $j$ is an isometry. If $P_1$ were weakly compact, then $pP_1j=\Id_{\ell_1}$ would be weakly compact, contradicting nonreflexivity of $\ell_1$.
\end{proof}

\subsection{Exact approximable extensions of Rademacher restrictions}

\subsubsection{Lifting coefficient fields}
\begin{lemma}\label{c:lem:coefficientquotient}
The map
\[
 \mathcal C:L^\infty(\Delta)\to\ell_2,\qquad
 \mathcal Cg=\left(\int_\Delta gr_j\,d\Prob\right)_{j\geq1}
\]
is bounded and onto. Each $a\in\ell_2$ has a preimage $g$ with
\begin{equation}\label{c:eq:coefficientlift}
 \norm{g}_\infty\leq\sqrt3\norm{a}_2.
\end{equation}
\end{lemma}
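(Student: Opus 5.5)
Boundedness is immediate. For $g\in L^\infty$, $\mathcal Cg$ is the coefficient sequence of the $L^2$-orthogonal projection of $g$ onto the closed span of the orthonormal system $(r_j)$. Bessel's inequality therefore gives $\norm{\mathcal Cg}_2\leq\norm{g}_2\leq\norm{g}_\infty$.

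For surjectivity with the stated constant, we dualize and use the Khintchine lower bound \eqref{c:eq:khintchine} from Lemma~\ref{c:lem:rad}. Let $U:\ell_2\to R\subseteq L^1(\Prob)$ be $Ua=\sum_ja_jr_j$. Then $\mathcal C$ is the restriction to $L^\infty=(L^1)^*$ of the adjoint $U^*:(L^1)^*\to\ell_2^*=\ell_2$. Indeed, for finitely supported $b$ we have $\pair{U^*g}{b}=\int g\,Ub\,d\Prob=\sum_jb_j\int gr_j\,d\Prob$, and the general case follows by density.

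By \eqref{c:eq:khintchine}, $U$ is bounded below with $\norm{Ub}_1\geq3^{-1/2}\norm{b}_2$. We now produce the preimage directly by Hahn--Banach. Fix $a\in\ell_2$ and define a functional $\varphi$ on the subspace $R$ by $\varphi(Ub)=\pair{a}{b}_{\ell_2}$. This is well defined because $U$ is injective, and
\[
 \abs{\varphi(Ub)}\leq\norm{a}_2\norm{b}_2\leq\sqrt3\norm{a}_2\norm{Ub}_1 .
\]
Thus $\norm{\varphi}\leq\sqrt3\norm{a}_2$ on $R\subseteq L^1(\Prob)$. Extend $\varphi$ by Hahn--Banach to a functional on all of $L^1(\Prob)$ with the same norm. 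Since $\Prob$ is a probability measure, and hence $\sigma$-finite, the duality $L^1(\Prob)^*=L^\infty(\Prob)$ represents the extension by some $g\in L^\infty$ with $\norm{g}_\infty\leq\sqrt3\norm{a}_2$. Testing on $b=e_j$ gives $\int gr_j\,d\Prob=\varphi(r_j)=a_j$, so $\mathcal Cg=a$. This proves both surjectivity and \eqref{c:eq:coefficientlift}.

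There is no real obstacle here. The only points needing care are that the lower Khintchine constant $3^{-1/2}$ is exactly what yields the factor $\sqrt3$, and that the $L^1$–$L^\infty$ duality is valid on the probability space $(\Delta,\Prob)$.
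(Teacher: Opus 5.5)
Your proposal is correct and follows essentially the paper's proof: Bessel for boundedness, then the functional $\sum_j c_jr_j\mapsto\sum_j c_ja_j$ on $R$ has norm at most $\sqrt3\norm{a}_2$ by the lower Khintchine bound \eqref{c:eq:khintchine}, and a Hahn--Banach extension to $E$ is represented by some $g\in L^\infty$. Your identification of $\mathcal C$ with the restriction of $U^*$ is a harmless aside that the argument does not need.
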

\begin{proof}
Bessel gives $\norm{\mathcal Cg}_2\leq\norm{g}_2\leq\norm{g}_\infty$.
The functional $\sum_jc_jr_j\mapsto\sum_jc_ja_j$ has norm at most $\sqrt3\norm{a}_2$ on $R$ by \eqref{c:eq:khintchine}. Extend it to $E$ by Hahn--Banach and represent the extension by $g\in L^\infty$. We use the dual pairing $\int fg$.
\end{proof}

\begin{lemma}[Bochner lifting]\label{c:lem:bochner}
Let $Z,Y$ be Banach spaces, let $Q:Z\to Y$ be bounded, and suppose every $y\in Y$ admits $z\in Z$ with $Qz=y$ and $\norm{z}\leq c\norm{y}$. For a finite measure $\nu$ and $a\in L^1(\nu;Y)$, there exists $b\in L^1(\nu;Z)$ with
\begin{equation}\label{c:eq:bochner}
 Qb(s)=a(s)\text{ almost everywhere},\qquad
 \norm{b}_{L^1(\nu;Z)}\leq2c\norm{a}_{L^1(\nu;Y)}.
\end{equation}
\end{lemma}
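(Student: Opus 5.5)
The plan is the standard "successive approximation by simple functions" lifting argument. It uses the lifting constant $c$ once per dyadic correction step, and the factor $2$ absorbs the geometric series.

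\emph{Step 1: approximate $a$ by simple functions with summable errors.} Since $a$ is Bochner integrable, simple functions are dense in $L^1(\nu;Y)$. Fix $\eta>0$ and choose simple functions $a_k=\sum_i \one_{S_{k,i}}y_{k,i}$, with finitely many disjoint measurable pieces, such that
\[
 \norm{a-a_1}_{L^1}\leq\eta,
 \qquad
 \norm{a-a_k}_{L^1}\leq 2^{-k}\eta .
\]
Put $d_1=a_1$ and $d_k=a_k-a_{k-1}$ for $k\geq2$. Each $d_k$ is simple, $\sum_k d_k=a$ in $L^1(\nu;Y)$ and almost everywhere along the partial sums, and
\[
 \norm{d_1}_{L^1}\leq\norm{a}_{L^1}+\eta,
 \qquad
 \norm{d_k}_{L^1}\leq 3\cdot2^{-k}\eta\quad(k\geq2).
\]

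\emph{Step 2: lift each piece value by value.} For each simple $d_k$, write it as $\sum_i\one_{S_i}y_i$ with disjoint $S_i$. Choose $z_i$ with $Qz_i=y_i$ and $\norm{z_i}\leq c\norm{y_i}$, and set $b_k=\sum_i\one_{S_i}z_i$. Then $b_k$ is simple, $Qb_k=d_k$ pointwise, and $\norm{b_k}_{L^1(\nu;Z)}\leq c\norm{d_k}_{L^1(\nu;Y)}$ by disjointness.

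\emph{Step 3: sum the lifts.} The series $\sum_k b_k$ converges absolutely in the Banach space $L^1(\nu;Z)$ to some $b$. Its norm satisfies
\[
 \norm{b}\leq c\bigl(\norm{a}+\eta+3\eta\bigr).
\]
Because $Q$ induces a bounded map on $L^1$, we have $Qb=\sum_k Qb_k=\sum_k d_k=a$ in $L^1(\nu;Y)$, hence almost everywhere. If $a\neq0$, take $\eta=\norm{a}/4$; this gives $\norm{b}\leq2c\norm{a}$. If $a=0$, take $b=0$.

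\emph{Main obstacle.} There is no real obstacle. The only care needed is measurability: $b$ must be strongly measurable. This holds because it is an $L^1$-limit of simple functions. The factor $2$ is just slack that absorbs the approximation error. A direct pointwise selection $s\mapsto z(s)$ is deliberately avoided, since it would require a measurable selection theorem.
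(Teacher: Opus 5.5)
Your proposal is correct and is essentially the paper's proof: you telescope simple approximants with geometrically small errors, lift each simple increment value by value with constant $c$, and sum the lifts in $L^1(\nu;Z)$, with $Qb=a$ following from continuity of the induced map. The only difference is bookkeeping: you use $\eta=\norm{a}/4$ with a generous tail bound $3\eta$, whereas the paper uses $\delta=\norm{a}/2$ with exact total $\norm{a}+2\delta$. Your final estimate $\norm{b}\leq c(\norm{a}+4\eta)$ holds even if for $k=1$ you impose only $\norm{a-a_1}\leq\eta$.
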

\begin{proof}
The zero case is immediate. Choose finite-valued simple functions $t_n$ so that
$\norm{a-t_n}_{L^1}\leq\delta2^{-n}$ with $\delta=\norm{a}_{L^1}/2$.
Put $a_1=t_1$ and $a_n=t_n-t_{n-1}$ for $n\geq2$. Then
\[
 a=\sum_{n\geq1}a_n\text{ in }L^1(\nu;Y),\qquad
 \sum_{n\geq1}\norm{a_n}_{L^1}\leq\norm{a}_{L^1}+2\delta=2\norm{a}_{L^1}.
\]
Write each simple $a_n$ on disjoint measurable sets and choose a preimage in $Z$ for each of its finitely many values. This produces a simple $b_n$ with $Qb_n=a_n$ and $\norm{b_n}_{L^1}\leq c\norm{a_n}_{L^1}$.
The series $\sum_nb_n$ converges in the Bochner space $L^1(\nu;Z)$ to the required $b$. Continuity of the induced map on Bochner spaces gives $Qb=a$. No measurable linear right inverse is used.
\end{proof}

\subsubsection{The full infinite tail of a bounded kernel}
\begin{lemma}[Bounded kernels]\label{c:lem:boundedkernel}
Let $(S,\nu)$ be a finite measure space and let $h\in L^\infty(S\times\Delta,\nu\otimes\Prob)$. Define
\[
 H:E\to L^1(\nu),\qquad
 (Hf)(s)=\int_\Delta h(s,t)f(t)\,d\Prob(t).
\]
For every $\varepsilon>0$ there exists $C\in\Ap(E,L^1(\nu))$ satisfying
\begin{equation}\label{c:eq:boundedkernel}
 C|_{R}=H|_{R},\qquad \norm{C}\leq\norm{H}+\varepsilon.
\end{equation}
\end{lemma}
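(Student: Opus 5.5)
The plan is to split $H$ into a finite-rank ``head'' obtained by conditioning on the first $N$ coordinates, and a ``tail'' correction that agrees with $H$ on the Rademacher functions $r_j$ with $j>N$ and has arbitrarily small norm. The tail will be produced by lifting its coefficient field through Lemma~\ref{c:lem:coefficientquotient} and Lemma~\ref{c:lem:bochner}.

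\textbf{Step 1: the coefficient field of $H|_R$.} For $s\in S$ put
\[
 a(s)\coloneqq\Bigl(\int_\Delta h(s,t)r_j(t)\,d\Prob(t)\Bigr)_{j\geq1}=\mathcal C\,h(s,\cdot).
\]
By Bessel, $\norm{a(s)}_2\leq\norm{h(s,\cdot)}_2\leq\norm{h}_\infty$ for almost every $s$. Each coordinate is measurable by Fubini. Since $\ell_2$ is separable, Pettis' theorem makes $a$ strongly measurable, so $a\in L^\infty(\nu;\ell_2)\subseteq L^1(\nu;\ell_2)$ because $\nu$ is finite.

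Let $a^{(N)}(s)$ be the tail sequence $(0,\dots,0,a_{N+1}(s),a_{N+2}(s),\dots)$. Then $\norm{a^{(N)}(s)}_2\to0$ pointwise and is dominated by $\norm{h}_\infty$, so $\int\norm{a^{(N)}}_2\,d\nu\to0$ by dominated convergence. Fix $N$ so large that $2\sqrt3\int\norm{a^{(N)}}_2\,d\nu\leq\varepsilon$.

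\textbf{Step 2: the head.} Set $C_0\coloneqq HE_N$. Its range is spanned by the at most $2^N$ functions $s\mapsto\int h(s,t)w_S(t)\,d\Prob(t)$ with $S\subseteq\{1,\dots,N\}$, so $C_0$ has finite rank. Moreover $\norm{C_0}\leq\norm{H}$, since $E_N$ is contractive. By \eqref{c:eq:shrinking}, $C_0r_j=Hr_j$ for $j\leq N$ and $C_0r_j=0$ for $j>N$.

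\textbf{Step 3: the tail.} Apply Lemma~\ref{c:lem:bochner} to the surjection $\mathcal C:L^\infty(\Delta)\to\ell_2$, which has lifting constant $c=\sqrt3$ by \eqref{c:eq:coefficientlift}, and to the field $a^{(N)}$. This gives $b\in L^1(\nu;L^\infty(\Delta))$ with $\mathcal Cb(s)=a^{(N)}(s)$ almost everywhere and
\[
 \norm{b}_{L^1(\nu;L^\infty)}\leq2\sqrt3\int\norm{a^{(N)}}_2\,d\nu\leq\varepsilon.
\]
Define $G:E\to L^1(\nu)$ by $(Gf)(s)=\int_\Delta b(s)(t)f(t)\,d\Prob(t)$. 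Then $\abs{(Gf)(s)}\leq\norm{b(s)}_\infty\norm{f}_1$, so $\norm{G}\leq\norm{b}_{L^1(\nu;L^\infty)}\leq\varepsilon$. By construction $Gr_j=a_j=Hr_j$ for $j>N$ and $Gr_j=0$ for $j\leq N$.

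$G$ is approximable. Bochner functions are $L^1$-limits of simple functions $\sum_k\one_{S_k}(s)g_k$ with $g_k\in L^\infty$. Each such simple function induces the finite-rank operator $f\mapsto\sum_k(\int g_kf)\one_{S_k}$. The same estimate as above, applied to the difference, bounds the operator norm of the error by the $L^1(\nu;L^\infty)$ distance.

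\textbf{Step 4: conclusion.} Put $C\coloneqq C_0+G\in\Ap(E,L^1(\nu))$. Then $C$ agrees with $H$ on every $r_j$, hence on $R$ by linearity and continuity. Its norm satisfies $\norm{C}\leq\norm{H}+\varepsilon$, which is \eqref{c:eq:boundedkernel}.

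The main point to check carefully is Step 3. One needs the strong measurability of $a^{(N)}$ so that Lemma~\ref{c:lem:bochner} applies. One also needs that the lifted field $b$, which takes values in the nonseparable space $L^\infty$, still yields an approximable operator. This holds because Lemma~\ref{c:lem:bochner} produces a genuine Bochner-integrable $b$, namely an $L^1$-limit of simple functions, rather than merely a weak$^*$-measurable one.
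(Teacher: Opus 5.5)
Your proposal is correct and follows the paper's proof essentially step for step. It uses the same finite-rank head $HE_N$, the same tail operator obtained by lifting the truncated coefficient field through Lemmas~\ref{c:lem:coefficientquotient} and~\ref{c:lem:bochner}, and the same approximability argument via simple-function approximation in $L^1(\nu;L^\infty(\Delta))$. The only cosmetic difference is that you fix $N$ in advance so that $2\sqrt3\,\norm{a^{(N)}}_{L^1(\nu;\ell_2)}\leq\varepsilon$, whereas the paper lets $N\to\infty$.
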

\begin{proof}
Set $a_j(s)=\int_\Delta h(s,t)r_j(t)\,d\Prob(t)$. For almost every $s$, Bessel gives
\[
 \sum_{j\geq1}\abs{a_j(s)}^2
 \leq\int_\Delta\abs{h(s,t)}^2\,d\Prob(t)\leq\norm{h}_\infty^2.
\]
Let $a^{>N}(s)$ be the vector with its first $N$ coordinates zero and remaining coordinates $a_j(s)$. These are strongly measurable $\ell_2$-valued functions: their finite-coordinate truncations are measurable and converge pointwise in $\ell_2$. Dominated convergence yields
\begin{equation}\label{c:eq:tailfield}
 \norm{a^{>N}}_{L^1(\nu;\ell_2)}\longrightarrow0.
\end{equation}
Apply Lemmas~\ref{c:lem:coefficientquotient} and~\ref{c:lem:bochner} to obtain
$b_N\in L^1(\nu;L^\infty(\Delta))$ with
\[
 \mathcal Cb_N(s)=a^{>N}(s),\qquad
 \norm{b_N}_{L^1(L^\infty)}\leq2\sqrt3\norm{a^{>N}}_{L^1(\ell_2)}.
\]
Define $B_N:E\to L^1(\nu)$ by
\[
 (B_Nf)(s)=\langle b_N(s),f\rangle.
\]
The scalar function is measurable because evaluation against $f$ is continuous on $L^\infty$, and
\begin{equation}\label{c:eq:tailoperator}
 \norm{B_N}\leq\norm{b_N}_{L^1(L^\infty)}\longrightarrow0.
\end{equation}
Approximating $b_N$ in Bochner norm by finite-valued simple functions approximates $B_N$ in operator norm by finite-rank operators. Thus $B_N$ is approximable; this construction does not require a jointly chosen pointwise representative of every $L^\infty$-class $b_N(s)$.
By construction,
\[
 B_Nr_j=0\ (j\leq N),\qquad B_Nr_j=Hr_j\ (j>N).
\]
Therefore $C_N=HE_N+B_N$ is approximable and agrees with $H$ on every $r_j$, hence on $R$. Since $\norm{HE_N}\leq\norm{H}$, \eqref{c:eq:tailoperator} proves \eqref{c:eq:boundedkernel}.
\end{proof}

\subsubsection{Weak compactness and kernel truncation}
We use the following consequence of Weis's weak-approximation theorem \cite{Weis}, in the standard finite-measure form stated in \cite[Theorem~4.1 and Remark~4.2]{AN}.

\begin{theorem}[Classical bounded-kernel approximation, Weis]\label{c:lem:kernelapprox}
Let the source and target carry finite Borel measures on compact metrizable spaces, with completions allowed. Every weakly compact operator between their $L^1$-spaces is an operator-norm limit of operators with bounded measurable kernels.
\end{theorem}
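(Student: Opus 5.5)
We propose to prove the statement directly from the Dunford--Pettis--Phillips representation of weakly compact operators on $L^1$, followed by a truncation of the kernel. Let the source be $L^1(\mu)$ and the target $L^1(\nu)$, with $\mu,\nu$ finite, and let $T:L^1(\mu)\to L^1(\nu)$ be weakly compact.

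\textbf{Step 1 (representation).} By the Dunford--Pettis--Phillips theorem, the weakly compact operator $T$ is representable. Concretely, there is a strongly measurable, essentially bounded $g:\operatorname{supp}\mu\to L^1(\nu)$ with
\[
 Tf=\int f(t)\,g(t)\,d\mu(t)\qquad(f\in L^1(\mu)),
\]
and $\operatorname{ess\,sup}_t\norm{g(t)}=\norm{T}$. We will use the following refinement: for $\mu$-almost every $t$, the value $g(t)$ lies in the norm closure $K$ of $T(B_{L^1(\mu)})$. To see this, note that $g(t)$ is, for almost every $t$, a Lebesgue-differentiation limit of the averages $T(\mu(A)^{-1}\one_A)$ over small sets $A\ni t$. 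The compact metrizable hypothesis gives a countable generating algebra, so such differentiation bases are available, and it also gives separability of the spaces involved. By the Dunford--Pettis criterion, the set $K$ is uniformly integrable in $L^1(\nu)$.

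\textbf{Step 2 (countably-valued approximation).} Strong measurability gives an essentially separable range. Hence, for each $\varepsilon>0$, there are disjoint measurable sets $B_i$ covering the support of $\mu$, and vectors $x_i\in K$ with $\norm{g(t)-x_i}\leq\varepsilon$ for almost every $t\in B_i$. This is done by covering the separable set $K$ with countably many $\varepsilon$-balls centred in $K$ and pulling the cover back under $g$. Put $k(s,t)=\sum_i\one_{B_i}(t)x_i(s)$ and let $T_k$ be the operator with kernel $k$. An $L^1\to L^1$ operator with such a kernel has norm at most $\operatorname{ess\,sup}_t\norm{k(\cdot,t)-g(t)}_1$ away from $T$, so $\norm{T-T_k}\leq\varepsilon$.

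\textbf{Step 3 (truncation, the main point).} Replace each $x_i$ by $x_i^M\coloneqq x_i\one_{\{|x_i|\leq M\}}$. The resulting kernel $k_M$ is measurable and bounded by $M$. By the same sup-over-$t$ estimate,
\[
 \norm{T_k-T_{k_M}}\leq\sup_i\int_{\{|x_i|>M\}}|x_i|\,d\nu.
\]
The right-hand side tends to $0$ as $M\to\infty$, uniformly in $i$, because every $x_i$ lies in the uniformly integrable set $K$. Choosing $M$ large then gives $\norm{T-T_{k_M}}\leq2\varepsilon$, which is the assertion.

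The expected obstacle is Step~1's refinement that the values of $g$ can be taken in the closure of $T(B_{L^1(\mu)})$. Without this, the countably many vectors $x_i$ need not be uniformly integrable, and the truncation in Step~3 cannot be made uniform in $i$. I would first try to justify it via the martingale argument: with $E_n$ the conditional expectations for a generating sequence of finite partitions, one has $TE_nf=\int f\,g_n\,d\mu$, where $g_n=\E[g\mid\F_n]$ takes values in $T(B_{L^1(\mu)})$. Since $g_n\to g$ almost everywhere in Bochner norm, the limit values lie in $K$. Measurability of $k_M$ on the product is immediate from the countable-sum form of the kernel.
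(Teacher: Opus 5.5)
Your argument is correct, but it takes a different route from the paper. The paper deduces the statement in a few lines from Weis's formula for the weak essential norm. It splits an operator $H$ on $L^1$ into a singular part $S$ and an integral part with kernel $k$. Then $\norm{H}_w=0$ forces the singular term to vanish, so $S=0$, and forces $\esssup_t\int_{\{|k(s,t)|\geq n\}}|k(s,t)|\,d\nu(s)\to0$, so the truncated kernels $k\one_{\{|k|<n\}}$ converge to $H$ in norm.

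You obtain exactly these two facts from classical results instead. The Dunford--Pettis--Phillips representation shows that $T$ is an integral operator at all. The uniform tail bound comes from two further facts. First, $g(t)$ lies almost everywhere in the norm closure $K$ of $T(B_{L^1(\mu)})$. Your martingale justification of this is sound: $\E[g\mid\F_n]$ takes values in $T(B_{L^1(\mu)})$ on atoms of positive measure, and conditional expectations of a Bochner-integrable function converge almost everywhere in any Banach space. Second, $K$ is uniformly integrable by the Dunford--Pettis criterion.

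Your route is more elementary and self-contained. It needs no band decomposition of regular operators and no weak-essential-norm formula, and it uses the compact metrizable hypothesis only as a convenience. The paper's route is shorter given the citation, and it comes with a quantitative formula valid for every bounded operator.

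Your Step~2 serves only to secure joint measurability. Because $g(t)\in K$ almost everywhere, you could instead truncate a jointly measurable version of $(s,t)\mapsto g(t)(s)$ directly, which reproduces the paper's approximating sequence.
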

\begin{proof}[Derivation from the cited theorem]
Write $H=S+K$ as its singular and integral parts, with kernel $k(s,t)$ for $K$. The cited formula states
\[
 \norm{H}_w
 =\inf_{n\geq1}\esssup_t\left((|S|^*\one)(t)
       +\int_{\{|k(s,t)|\geq n\}}\abs{k(s,t)}\,d\nu(s)\right).
\]
For weakly compact $H$ the left side is zero. The nonnegative singular term must vanish, so $S=0$, and the essential suprema of the kernel tails tend to zero. The operators with kernels $k\one_{\{|k|<n\}}$ therefore converge to $H$ in norm. The operator-norm estimate needed here follows directly from Tonelli's theorem.
\end{proof}

\begin{lemma}\label{c:lem:weakrestriction}
For a finite Borel measure $\nu$ on a compact metrizable space and $H\in\W(E,L^1(\nu))$, the restriction $H|_{R}$ admits an approximable extension $C:E\to L^1(\nu)$.
\end{lemma}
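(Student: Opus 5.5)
The plan is to combine Weis's bounded-kernel approximation (Theorem~\ref{c:lem:kernelapprox}) with the exact approximable extension for bounded kernels (Lemma~\ref{c:lem:boundedkernel}), and then glue the pieces by a telescoping series whose norms are summable.

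First, realize $E=L^1(\Delta,\Prob)$ as an $L^1$-space of a finite Borel measure on a compact metrizable space: $\Delta$ with the product topology is compact metrizable and $\Prob$ is a Borel probability measure. Theorem~\ref{c:lem:kernelapprox} then applies to $H\in\W(E,L^1(\nu))$. It gives operators $H_k$ with bounded measurable kernels $h_k\in L^\infty(S\times\Delta)$ such that $\norm{H-H_k}\leq 2^{-k}$.

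Next, write $H$ as a telescoping sum
\[
 H=H_1+\sum_{k\geq1}(H_{k+1}-H_k).
\]
Each difference $H_{k+1}-H_k$ is again an operator with bounded kernel $h_{k+1}-h_k$, and its norm is at most $2^{-k}+2^{-k-1}\leq 2^{1-k}$. Apply Lemma~\ref{c:lem:boundedkernel} to $H_1$ and to each difference. This yields approximable operators $C_0$ and $C_k$ with
\[
 C_0|_R=H_1|_R,\qquad C_k|_R=(H_{k+1}-H_k)|_R,\qquad \norm{C_k}\leq 2^{1-k}+2^{-k}.
\]
The series $C=C_0+\sum_{k\geq1}C_k$ therefore converges absolutely in operator norm. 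Since $\Ap(E,L^1(\nu))$ is norm closed, $C$ is approximable.

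Finally, check the restriction. For $r\in R$, the partial sums of $Cr$ equal $H_{K+1}r$, which converges to $Hr$. Hence $C|_R=H|_R$.

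The main obstacle is already handled by the earlier lemmas: extending the full infinite Rademacher tail of a bounded kernel by an approximable operator with controlled norm. The controlled norm in Lemma~\ref{c:lem:boundedkernel} is what makes the telescoping series summable. The only remaining points of care are two. One is that the measure-theoretic hypotheses of Weis's theorem apply to $(\Delta,\Prob)$ and $(S,\nu)$. The other is that the kernels of the differences remain essentially bounded, which is immediate.
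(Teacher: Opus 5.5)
Your proposal is correct and is essentially the paper's proof. You use a telescoping decomposition from Weis's bounded-kernel approximation with geometric errors, apply Lemma~\ref{c:lem:boundedkernel} to each difference with slack $2^{-k}$, and sum the absolutely convergent series of approximable operators. The only cosmetic difference is that you treat $H_1$ separately instead of setting $H_0=0$.
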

\begin{proof}
Choose bounded-kernel operators $H_n$ with $\norm{H-H_n}\leq2^{-n}$, and put $H_0=0$. The series $\sum_{n\geq1}(H_n-H_{n-1})$ converges absolutely in operator norm to $H$.
For each $n$, Lemma~\ref{c:lem:boundedkernel} gives an approximable $C_n$ with
\[
 C_n|_{R}=(H_n-H_{n-1})|_{R},\qquad
 \norm{C_n}\leq\norm{H_n-H_{n-1}}+2^{-n}.
\]
The norm-convergent sum $C=\sum_nC_n$ is approximable, and its restriction to $R$ is $H|_{R}$.
\end{proof}

\begin{lemma}[A separable target for one operator]\label{c:lem:seprange}
If $H:E\to L^1(\mu)$ is bounded, its range is contained in a closed subspace $E_1$ lattice isometric to $L^1(\nu)$ for a standard finite measure $\nu$. The space $E_1$ may be chosen separable. If $H$ is weakly compact into $L^1(\mu)$, it is weakly compact into $E_1$.
\end{lemma}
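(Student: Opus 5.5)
The plan is to build $E_1$ as the closed sublattice of $L^1(\mu)$ generated by the range, using separability of $E$. Since $E=L^1(\Prob)$ is separable, $H(E)$ is separable. Choose a countable dense set $\{g_k\}\subseteq H(E)$. Set $\Sigma_0$ to be the $\sigma$-algebra generated by the $g_k$ (fixing representatives), intersected with the $\sigma$-finite support $S_0=\bigcup_k\{g_k\neq0\}$. Then take $E_1=L^1(S_0,\Sigma_0,\mu)$, viewed as a subspace of $L^1(\mu)$ by extending functions by zero off $S_0$. This is a closed sublattice, it is lattice isometric to an $L^1$-space, and it contains every $g_k$, hence contains $\overline{H(E)}$. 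It is separable because $\Sigma_0$ is countably generated modulo null sets: simple functions over a countable generating algebra with rational coefficients are dense.

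Next we replace $\mu$ on $S_0$ by a finite standard measure.
\begin{enumerate}
\item Because $\mu|_{S_0}$ is $\sigma$-finite, pick a strictly positive $w\in L^1(\mu|_{S_0})$ that is $\Sigma_0$-measurable, for instance $w=\sum_k2^{-k}\min(1,|g_k|)/(1+\norm{g_k}_1)$, which is positive on $S_0$.
\item Multiplication by $w$ is a lattice isometry from $L^1(S_0,\Sigma_0,w\,d\mu)$ onto $E_1$ via $f\mapsto fw$, and $w\,d\mu$ is a finite measure.
\item Apply the countable generating map $\Phi=(\phi_m)_m:S_0\to\{0,1\}^{\N}$, where the $\phi_m$ are indicators of the generators. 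This pushes $w\,d\mu$ forward to a finite Borel measure $\nu$ on the Cantor set, which is compact metrizable.
\item Since $\Sigma_0=\Phi^{-1}(\text{Borel})$, composition with $\Phi$ gives a lattice isometry from $L^1(\nu)$ onto $L^1(S_0,\Sigma_0,w\,d\mu)$. Surjectivity holds because each $\Sigma_0$-measurable function factors through $\Phi$ (Doob–Dynkin).
\end{enumerate}
This yields the standard finite measure $\nu$ needed for Theorem~\ref{c:lem:kernelapprox}, with completions allowed.

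Finally, weak compactness carries over. $E_1$ is a closed subspace of $L^1(\mu)$, and $H(B_E)$ is relatively weakly compact in $L^1(\mu)$. By Hahn–Banach, the weak topology of $E_1$ is the restriction of the weak topology of $L^1(\mu)$. Moreover, $E_1$ is weakly closed because it is norm closed and convex. So the weak closure of $H(B_E)$ lies in $E_1$ and is weakly compact there.

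The main obstacle is measure-theoretic care in step (iv), namely surjectivity and the treatment of null sets. The Doob–Dynkin factorization handles measurability. Null sets are harmless because $\Phi$-preimages of $\nu$-null sets are $w\,d\mu$-null, so the map is well defined on classes.
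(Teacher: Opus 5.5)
Your proposal is correct and follows essentially the same route as the paper. Both arguments take a countable dense subset of $H(E)$ and use a strictly positive integrable weight built from it to pass to a finite measure. Both then code a countably generated $\sigma$-algebra into a compact metrizable cube, using a Doob--Dynkin-type factorization for surjectivity, and use Hahn--Banach for the weak topology. The differences are cosmetic: you take the $\sigma$-algebra generated by the $g_k$ and reweight afterwards, coding into the Cantor set via indicators, whereas the paper uses the $\sigma$-algebra generated by $f_j/h$ and codes into $[-1,1]^{\N}$ via $\arctan$.
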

\begin{proof}
The zero operator is harmless. Choose a sequence $(f_j)$ dense in $H(E)$ and set
\[
 h=\sum_{j\geq1}2^{-j}\frac{\abs{f_j}}{1+\norm{f_j}_1},\qquad
 A=\{h>0\},\qquad d\nu_0=h\,d\mu\text{ on }A.
\]
The measure $\nu_0$ is finite, and all $f_j$ vanish outside $A$ modulo null sets. Let $\Sigma_1$ be generated on $A$ by the scalar functions $f_j/h$. Multiplication by $h$, followed by zero extension, is a lattice isometry
\[
 L^1(A,\Sigma_1,\nu_0)\longrightarrow L^1(\mu)
\]
whose closed range $E_1$ contains every $f_j$.
Apply the bounded injective transform $u\mapsto (2/\pi)\arctan u$ to the functions $f_j/h$. The resulting countable map takes values in the compact metrizable cube $[-1,1]^{\N}$ and generates exactly $\Sigma_1$. Its pushforward is a finite Borel measure on that cube; pullback is onto $L^1(A,\Sigma_1,\nu_0)$ by density of simple functions from the generated sigma-algebra. This supplies the required compact-metric model and proves separability.
The weak topology of a closed subspace is the topology induced from the ambient weak topology by Hahn--Banach. A weak closure of a subset of $E_1$ stays in $E_1$, which proves the last assertion.
\end{proof}

\begin{proposition}[An initial approximable extension]\label{c:prop:initialextension}
Under the ambient condition \eqref{c:eq:ambient}, if $F\in\W(E_\mu,X_\mu)$, there exists $C_{\mathrm{init}}\in\Ap(E,X_\mu)$ with
\begin{equation}\label{c:eq:initialextension}
 C_{\mathrm{init}}|_{R}=(F\iota)|_{R}.
\end{equation}
\end{proposition}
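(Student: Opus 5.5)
The plan is to move the problem from the target $X_\mu$ to an $L^1$-space, apply the earlier extension results there, and push the result back down with $q_\mu$.

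\emph{Step 1: reduce to an operator into $E_\mu$.} The operator $G\coloneqq F\iota:E\to X_\mu$ is weakly compact. Since $E$ is an $L^1$-space and $R_\mu$ is reflexive (shown after \eqref{c:eq:quotretraction}), Lemma~\ref{c:lem:lifting} gives a lifting $\widehat G:E\to E_\mu$ with $q_\mu\widehat G=G$. Because $G$ is weakly compact, every bounded lifting is weakly compact, so $\widehat G\in\W(E,E_\mu)$.

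\emph{Step 2: reduce to a standard finite measure.} Apply Lemma~\ref{c:lem:seprange} to $\widehat G$. This gives a closed subspace $E_1\subseteq E_\mu$ containing the range of $\widehat G$, together with a lattice isometry $\Psi:L^1(\nu)\to E_1$. Here $\nu$ is a finite Borel measure on a compact metrizable space (the cube $[-1,1]^{\N}$), and $\widehat G$ is weakly compact as a map into $E_1$. Then $H\coloneqq\Psi^{-1}\widehat G\in\W(E,L^1(\nu))$.

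\emph{Step 3: extend and push down.} Lemma~\ref{c:lem:weakrestriction} gives an approximable $C:E\to L^1(\nu)$ with $C|_R=H|_R$. Set
\[
 C_{\mathrm{init}}\coloneqq q_\mu\,\mathrm{inc}_{E_1}\,\Psi\, C,
\]
where $\mathrm{inc}_{E_1}:E_1\to E_\mu$ is the inclusion. A bounded operator composed with an approximable one is approximable, since finite-rank approximants stay finite rank and norm limits pass through bounded factors. On $R$,
\[
 C_{\mathrm{init}}|_R=q_\mu\Psi H|_R=q_\mu\widehat G|_R=G|_R=(F\iota)|_R,
\]
which is \eqref{c:eq:initialextension}.

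\emph{Main obstacle.} The only delicate point is Step 1: weak compactness has to survive the lifting, and the lifting has to exist at all with target the quotient $X_\mu$ rather than $X$. Both are covered by Lemma~\ref{c:lem:lifting}, applied with $Y=E_\mu$ and $Z=R_\mu$. That lemma needs $R_\mu$ closed and reflexive, which holds because $\iota$ is an isometry and $R$ is reflexive. The measure-theoretic reduction in Step 2 is routine given Lemma~\ref{c:lem:seprange}.
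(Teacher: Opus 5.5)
Your proposal is correct and follows the paper's proof step for step. You lift $F\iota$ through $q_\mu$ by Lemma~\ref{c:lem:lifting}, pass to a standard finite $L^1$-model via Lemma~\ref{c:lem:seprange}, extend by Lemma~\ref{c:lem:weakrestriction}, and compose with the inclusion $E_1\to E_\mu$ and $q_\mu$. Your explicit lattice isometry $\Psi$ and the remark that bounded compositions preserve approximability only spell out details the paper leaves implicit.
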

\begin{proof}
The subspace $R_\mu$ is reflexive by Lemma~\ref{c:lem:rad}. Lift $F\iota:E\to E_\mu/R_\mu$ through $q_\mu$ by Lemma~\ref{c:lem:lifting}, obtaining a weakly compact $H:E\to E_\mu$. By Lemma~\ref{c:lem:seprange}, regard $H$ as an operator into a standard finite $L^1$-model $E_1$. Lemma~\ref{c:lem:weakrestriction} gives an approximable extension of $H|_{R}$ into $E_1$. Compose it with the inclusion $E_1\to E_\mu$ and $q_\mu$ to obtain $C_{\mathrm{init}}$.
\end{proof}

\subsection{Norm improvement and compact correction}

\begin{theorem}[Compact correction]\label{c:thm:maincorrection}
Assume the ambient condition \eqref{c:eq:ambient}. Suppose $G\in\B(E_\mu,X_\mu)$ and $F\in\W(E_\mu,X_\mu)$ satisfy $G|_{R_\mu}=F|_{R_\mu}$. For every $\varepsilon>0$ there exists $V\in\Ap(E_\mu,X_\mu)$ such that
\begin{equation}\label{c:eq:maincorrection}
 V|_{R_\mu}=G|_{R_\mu},\qquad \norm{V}\leq\norm{G}+\varepsilon.
\end{equation}
\end{theorem}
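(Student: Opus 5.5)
The plan is to combine the initial approximable extension of Proposition~\ref{c:prop:initialextension} with a finite-rank truncation along the conditional expectations $E_N$. The point of the truncation is to pay for the norm only through $G$ on finitely many Rademacher coordinates, and to make the approximable tail small using the shrinking property in Lemma~\ref{c:lem:rad}.

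\textbf{Step 1: an approximable operator with the right restriction.} Since $F\in\W(E_\mu,X_\mu)$, Proposition~\ref{c:prop:initialextension} gives $C_{\mathrm{init}}\in\Ap(E,X_\mu)$ with $C_{\mathrm{init}}r=F\iota r=G\iota r$ for $r\in R$. Its norm is uncontrolled, so it cannot be used directly.

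\textbf{Step 2: splitting $C_{\mathrm{init}}$.} Fix $\eta>0$ and choose a finite-rank $F_0:E\to X_\mu$ with $\norm{C_{\mathrm{init}}-F_0}\leq\eta$. Write
\[
 F_0=\sum_{k=1}^m\varphi_k\otimes x_k,\qquad \varphi_k\in E^*,\ x_k\in X_\mu .
\]
Put $\psi_k=\varphi_k|_R\in R^*$. By \eqref{c:eq:shrinking}, $\norm{\psi_k-Q_N^*\psi_k}\to0$ as $N\to\infty$. The functional $\psi_k-Q_N^*\psi_k$ equals $\psi_k\circ(\Id_R-Q_N)$. By Hahn--Banach, extend it to $\theta_{k,N}\in E^*$ with the same norm, and set
\[
 W_N=\sum_{k=1}^m\theta_{k,N}\otimes x_k .
\]
This $W_N$ is finite rank, satisfies $\norm{W_N}\to0$ as $N\to\infty$, and agrees with $F_0(\Id-E_N)$ on $R$, because $E_N|_R=Q_N$.

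\textbf{Step 3: the candidate.} Define
\[
 V_N\coloneqq G\iota E_NP+\bigl[W_N+(C_{\mathrm{init}}-F_0)(\Id_E-E_N)\bigr]P .
\]
Approximability holds term by term. The operator $E_N$ has finite-dimensional range (functions of $N$ coordinates), so $G\iota E_NP$ is finite rank. $W_N$ is finite rank, and $C_{\mathrm{init}}-F_0$ is approximable.

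For the restriction, take $r\in R$ and use $P\iota=\Id_E$ from \eqref{c:eq:ambient}:
\[
 V_N\iota r=G\iota Q_Nr+F_0(r-Q_Nr)+(C_{\mathrm{init}}-F_0)(r-Q_Nr).
\]
The last two terms combine to $C_{\mathrm{init}}(r-Q_Nr)=G\iota(r-Q_Nr)$, since $r-Q_Nr\in R$. Hence $V_N\iota r=G\iota r$, i.e.\ $V_N|_{R_\mu}=G|_{R_\mu}$.

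For the norm, use $\norm{P}\leq1$, $\norm{\iota}=1$, $\norm{E_N}\leq1$ and $\norm{\Id-E_N}\leq2$:
\[
 \norm{V_N}\leq\norm{G}+\norm{W_N}+2\eta .
\]
Choosing $\eta=\varepsilon/4$ and then $N$ with $\norm{W_N}\leq\varepsilon/2$ gives \eqref{c:eq:maincorrection}.

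\textbf{Expected obstacle.} The tail $C_{\mathrm{init}}(\Id-E_N)$ does \emph{not} tend to zero on all of $E$, because $E_N^*$ is not strongly convergent on $L^\infty$. The device above handles this by using shrinking only on $R^*$, for the finitely many functionals of $F_0$, and re-extending those small functionals by Hahn--Banach. This keeps the extension finite rank with a norm bound; such an extension would not be available for a general small operator $R\to X_\mu$, since $X_\mu$ is not injective. The remaining tail $(C_{\mathrm{init}}-F_0)(\Id-E_N)$ is already defined on all of $E$ and is small uniformly in $N$.
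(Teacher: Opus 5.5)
Your proposal is correct and follows the paper's proof essentially verbatim. The paper packages your Steps~2--3 as the norm-improvement Lemma~\ref{c:lem:normimprovement}, with $Z_N=(C_{\mathrm{init}}-A)(\Id_E-E_N)+L_N$ and $V_\Delta=G\iota E_N+Z_N$, and then sets $V=V_\Delta P$; this is exactly your $V_N$.
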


The next lemma is independent of any lattice structure in its target. This is important for the potentially nonseparable quotient $X_\mu$.

\begin{lemma}[Norm improvement from an approximable extension]\label{c:lem:normimprovement}
Let $Y$ be a Banach space, let $G_{\Delta}:E\to Y$ be bounded, and put $K=G_{\Delta}|_{R}$. If $K$ admits an approximable extension $C_{\mathrm{init}}:E\to Y$, then for every $\varepsilon>0$ it admits an approximable extension $V_{\Delta}$ with
\begin{equation}\label{c:eq:normimprovement}
 \norm{V_{\Delta}}\leq\norm{G_{\Delta}}+\varepsilon.
\end{equation}
\end{lemma}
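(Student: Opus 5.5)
The plan is to split $G_{\Delta}$ along the conditional expectations $E_N=\E[\,\cdot\mid\F_N]$: keep $G_{\Delta}$ exactly on the first $N$ Rademacher coordinates and use $C_{\mathrm{init}}$ on the tail, after making the tail part small in norm. Since $\F_N$ has only $2^N$ atoms, $E_N$ has finite-dimensional range. Hence $G_{\Delta}E_N$ is finite rank, and $\norm{G_{\Delta}E_N}\leq\norm{G_{\Delta}}$. By Lemma~\ref{lem:conditional}, $E_Nr_j=r_j$ for $j\leq N$ and $E_Nr_j=0$ for $j>N$, so $G_{\Delta}E_N$ already agrees with $K$ on $r_1,\dots,r_N$ and vanishes on the remaining $r_j$. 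It remains to add an approximable operator of norm at most $\varepsilon$ that vanishes on $r_1,\dots,r_N$ and agrees with $K$ on each $r_j$ with $j>N$. Agreement on every $r_j$ then gives agreement on $R$ by linearity and continuity.

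\emph{Step 1 (splitting $C_{\mathrm{init}}$).} Fix $\eta>0$. Since $C_{\mathrm{init}}$ is approximable, write $C_{\mathrm{init}}=F+C'$ with $F=\sum_{k=1}^m y_k\otimes g_k$ finite rank ($y_k\in Y$, $g_k\in L^\infty=E^*$) and $\norm{C'}\leq\eta$. The remainder $C'$ is approximable, being a difference of approximable operators. Then $C'(\Id-E_N)$ is approximable, has norm at most $2\eta$, kills $r_j$ for $j\leq N$, and equals $C'r_j$ for $j>N$.

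\emph{Step 2 (shrinking the tails of the finitely many functionals).} This is the key step. The naive candidate $F(\Id-E_N)$ does not work, because $g_k-E_Ng_k$ need not be small in $L^\infty$. Instead I only use the restrictions $\varphi_k=g_k|_R\in R^*$. By Lemma~\ref{c:lem:rad}, $\norm{\varphi_k-Q_N^*\varphi_k}_{R^*}\to0$. The functional $\psi_k^N=\varphi_k\circ(\Id_R-Q_N)$ vanishes on $r_j$ for $j\leq N$, equals $\varphi_k(r_j)$ for $j>N$, and has norm tending to $0$.

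By Hahn--Banach, extend $\psi_k^N$ to $h_k^N\in L^\infty(\Delta)$ with $\norm{h_k^N}_\infty=\norm{\psi_k^N}$. Choose $N$ so large that $\sum_k\norm{y_k}\norm{h_k^N}_\infty\leq\eta$. The operator $F_N=\sum_k y_k\otimes h_k^N$ is then finite rank, has norm at most $\eta$, vanishes on $r_j$ for $j\leq N$, and equals $Fr_j$ for $j>N$.

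\emph{Step 3 (assembling).} Set
\[
 V_{\Delta}=G_{\Delta}E_N+C'(\Id-E_N)+F_N .
\]
On $r_j$ with $j\leq N$ this gives $G_{\Delta}r_j$. On $r_j$ with $j>N$ it gives $C'r_j+Fr_j=C_{\mathrm{init}}r_j=G_{\Delta}r_j$. Hence $V_{\Delta}|_R=K$, and $V_{\Delta}$ is approximable. Moreover
\[
 \norm{V_{\Delta}}\leq\norm{G_{\Delta}}+3\eta,
\]
and taking $\eta=\varepsilon/3$ gives \eqref{c:eq:normimprovement}. No structure of $Y$ is used beyond its being a Banach space, as the lemma requires.

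The only genuine obstacle is Step 2. One must avoid needing $E_N^*\to\Id$ in norm on $L^\infty$, which fails. The resolution is that only the action on $R\cong\ell_2$ matters, where the coordinate projections are shrinking. Hahn--Banach then converts that small tail on $R$ into a small $L^\infty$ function on all of $E$.
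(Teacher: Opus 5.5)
Your proposal is correct and takes essentially the same route as the paper. The paper also sets $V_{\Delta}=G_{\Delta}E_N+(C_{\mathrm{init}}-A)(\Id_E-E_N)+L_N$, where $A$ is a finite-rank approximant of $C_{\mathrm{init}}$ and $L_N$ comes from norm-preserving Hahn--Banach extensions of the shrinking tails $\varphi_j|_{R}\circ(\Id_{R}-Q_N)$ given by Lemma~\ref{c:lem:rad}; only the $\varepsilon$-bookkeeping differs.
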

\begin{proof}
Choose a finite-rank operator $A=\sum_{j=1}^k y_j\otimes\varphi_j$ with
$\norm{C_{\mathrm{init}}-A}<\varepsilon/4$, where $\varphi_j\in E^*$.
By \eqref{c:eq:shrinking},
\[
 \norm{\varphi_j|_{R}\circ(\Id_{R}-Q_N)}\longrightarrow0.
\]
Hahn--Banach extends each of these functionals to $\psi_{j,N}\in E^*$ with unchanged norm. Consequently
\[
 L_N=\sum_{j=1}^k y_j\otimes\psi_{j,N}
\]
is finite-rank,
\[
 L_N|_{R}=A|_{R}(\Id_{R}-Q_N),\qquad \norm{L_N}\longrightarrow0.
\]
The operator
\begin{equation}\label{c:eq:ZN}
 Z_N=(C_{\mathrm{init}}-A)(\Id_{E}-E_N)+L_N
\end{equation}
is approximable, and
\[
 Z_N|_{R}=K(\Id_{R}-Q_N),\qquad
 \norm{Z_N}\leq2\norm{C_{\mathrm{init}}-A}+\norm{L_N}.
\]
Choose $N$ so that the right side is less than $\varepsilon$. Now put
\[
 V_{\Delta}=G_{\Delta}E_N+Z_N.
\]
The first term is finite-rank, the second approximable, and
\[
 V_{\Delta}|_{R}=KQ_N+K(\Id_{R}-Q_N)=K,
 \qquad
 \norm{V_{\Delta}}\leq\norm{G_{\Delta}}+\varepsilon.
\]
\end{proof}

\begin{proof}[Proof of Theorem~\ref{c:thm:maincorrection}]
Apply Proposition~\ref{c:prop:initialextension} to $F$ and use $F|_{R_\mu}=G|_{R_\mu}$. This gives an approximable extension of $(G\iota)|_{R}$ from $E$ to $X_\mu$. Lemma~\ref{c:lem:normimprovement}, with $G_{\Delta}=G\iota$, produces $V_{\Delta}\in\Ap(E,X_\mu)$ satisfying
\[
 V_{\Delta}|_{R}=(G\iota)|_{R},\qquad
 \norm{V_{\Delta}}\leq\norm{G\iota}+\varepsilon\leq\norm{G}+\varepsilon.
\]
Set $V=V_{\Delta}P$. Then $V$ is approximable and $V\iota r=V_{\Delta}r=G\iota r$ for $r\in R$. Contractivity of $P$ proves the claimed norm bound.
\end{proof}

\subsection{Weak essential norms and an algebraic lifting}

\begin{lemma}\label{c:lem:pullbacknorm}
Under the ambient condition \eqref{c:eq:ambient}, for every $T\in\B(X_\mu)$,
\begin{equation}\label{c:eq:pullbacknorm}
 \norm{Tq_\mu}_{w,E_\mu\to X_\mu}=\norm{T^{\co}}.
\end{equation}
\end{lemma}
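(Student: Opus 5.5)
The plan is to prove the two inequalities in \eqref{c:eq:pullbacknorm} separately. The lower bound comes from the residuum functor. The upper bound comes from lifting $Tq_\mu$ to an operator between $AL$-spaces, where Corollary~\ref{c:cor:ALco} identifies the weak essential norm with the residuum norm. No embedding of $X_\mu$ into $L^1$ should be needed for this lemma; Theorem~\ref{thm:main} enters only later.

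Throughout we use that $R_\mu=\iota R$ is a closed reflexive subspace of $E_\mu$: by Lemma~\ref{c:lem:rad} $R$ is reflexive, and $\iota$ is an isometry. Lemma~\ref{c:lem:refquot} therefore applies and shows that $q_\mu^{\co}:E_\mu^{\co}\to X_\mu^{\co}$ is a surjective isometry.

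\emph{Lower bound.} By \eqref{c:eq:coineq} and functoriality,
\[
 \norm{Tq_\mu}_w\geq\norm{(Tq_\mu)^{\co}}=\norm{T^{\co}q_\mu^{\co}}.
\]
Since $q_\mu^{\co}$ maps the unit ball of $E_\mu^{\co}$ onto the unit ball of $X_\mu^{\co}$, we get $\norm{T^{\co}q_\mu^{\co}}=\norm{T^{\co}}$.

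\emph{Upper bound.} Apply the metric lifting Lemma~\ref{c:lem:lifting} with $L=Y=E_\mu$, $Z=R_\mu$ and $A=Tq_\mu$. This yields $S\in\B(E_\mu)$ with $q_\mu S=Tq_\mu$. Only this identity is needed, not the norm equality. Functoriality gives
\[
 q_\mu^{\co}S^{\co}=T^{\co}q_\mu^{\co}.
\]
Because $q_\mu^{\co}$ is a bijective isometry, this yields $S^{\co}=(q_\mu^{\co})^{-1}T^{\co}q_\mu^{\co}$, and hence $\norm{S^{\co}}=\norm{T^{\co}}$.

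Now $S$ acts between $AL$-spaces, so Corollary~\ref{c:cor:ALco} gives $\norm{S}_w=\norm{S^{\co}}=\norm{T^{\co}}$. For $\varepsilon>0$, choose $W\in\W(E_\mu)$ with $\norm{S-W}\leq\norm{T^{\co}}+\varepsilon$. Then $q_\mu W$ is weakly compact, and since $\norm{q_\mu}\leq1$,
\[
 \norm{Tq_\mu-q_\mu W}=\norm{q_\mu(S-W)}\leq\norm{T^{\co}}+\varepsilon.
\]
Letting $\varepsilon\to0$ gives $\norm{Tq_\mu}_w\leq\norm{T^{\co}}$.

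The only delicate point is the transfer of residuum norms through $q_\mu^{\co}$. It relies on $q_\mu^{\co}$ being an \emph{isometric} bijection, not merely injective, and this is exactly what reflexivity of $R_\mu$ supplies through Lemma~\ref{c:lem:refquot}. The real depth lies in the external Theorem~\ref{c:thm:ALinput}, which is used only through Corollary~\ref{c:cor:ALco} and only for the operator $S$ between the $L^1$-spaces $E_\mu$, never directly on $X_\mu$.
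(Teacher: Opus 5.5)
Your proposal is correct and follows the paper's own route. You lift $Tq_\mu$ to an operator on $E_\mu$, transfer residuum norms through the surjective isometry $q_\mu^{\co}$ of Lemma~\ref{c:lem:refquot}, and apply Corollary~\ref{c:cor:ALco} to the lift for the upper bound; for the lower bound you use \eqref{c:eq:coineq} with surjectivity of $q_\mu^{\co}$. Your explicit appeal to Lemma~\ref{c:lem:lifting} with $L=Y=E_\mu$ and $Z=R_\mu$ simply spells out the lifting step that the paper states in one line.
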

\begin{proof}
Lift $Tq_\mu$ to $A_T:E_\mu\to E_\mu$ with $q_\mu A_T=Tq_\mu$. By Lemma~\ref{c:lem:refquot}, $q_\mu^{\co}:E_\mu^{\co}\to X_\mu^{\co}$ is a surjective isometry, and
\[
 q_\mu^{\co}A_T^{\co}=T^{\co}q_\mu^{\co}.
\]
Thus $\norm{A_T^{\co}}=\norm{T^{\co}}$. By Corollary~\ref{c:cor:ALco},
\[
 \norm{Tq_\mu}_w\leq\norm{A_T}_w=\norm{A_T^{\co}}=\norm{T^{\co}}.
\]
The first inequality follows by composing weakly compact approximants with $q_\mu$. Conversely, \eqref{c:eq:coineq} and surjectivity of $q_\mu^{\co}$ give
$\norm{Tq_\mu}_w\geq\norm{T^{\co}q_\mu^{\co}}=\norm{T^{\co}}$.
\end{proof}

\begin{theorem}[The norm estimate for the ambient quotient]\label{c:thm:ambientnorm}
Under \eqref{c:eq:ambient}, every $T\in\B(X_\mu)$ satisfies
$\norm{T^{\co}}\leq\norm{T}_w\leq2\norm{T^{\co}}$.
\end{theorem}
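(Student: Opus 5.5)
The lower inequality $\norm{T^{\co}}\leq\norm{T}_w$ is just \eqref{c:eq:coineq}. The work is in the upper inequality. The plan is to turn an almost optimal weakly compact approximant of $Tq_\mu$ (which lives on $E_\mu$) into one that descends to $X_\mu$. The compact correction theorem does this, at the cost of the factor $2$.

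\emph{Step 1: approximate $Tq_\mu$.} Fix $\varepsilon>0$. By Lemma~\ref{c:lem:pullbacknorm}, $\norm{Tq_\mu}_w=\norm{T^{\co}}$. So there is $F\in\W(E_\mu,X_\mu)$ with
\[
 \norm{Tq_\mu-F}\leq\norm{T^{\co}}+\varepsilon .
\]
Put $G\coloneqq Tq_\mu-F\in\B(E_\mu,X_\mu)$. Since $q_\mu$ vanishes on $R_\mu$, we have $G|_{R_\mu}=(-F)|_{R_\mu}$, and $-F$ is weakly compact.

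\emph{Step 2: correct $G$ so that it descends.} Apply Theorem~\ref{c:thm:maincorrection} to the pair $(G,-F)$. This gives $V\in\Ap(E_\mu,X_\mu)$ with
\[
 V|_{R_\mu}=G|_{R_\mu},\qquad \norm{V}\leq\norm{G}+\varepsilon .
\]
Then $G-V$ vanishes on $R_\mu$. Hence it factors through the metric quotient as $G-V=Sq_\mu$ for some $S\in\B(X_\mu)$. Because $q_\mu$ maps the open unit ball onto the open unit ball, the factor has norm
\[
 \norm{S}=\norm{G-V}\leq2\norm{G}+\varepsilon\leq2\norm{T^{\co}}+3\varepsilon .
\]

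\emph{Step 3: the difference is weakly compact.} Set $W\coloneqq T-S$. Then
\[
 Wq_\mu=Tq_\mu-G+V=F+V ,
\]
and $F+V$ is weakly compact because approximable operators are compact. The set $W(B_{X_\mu})$ is contained in $(F+V)\bigl((1+\delta)B_{E_\mu}\bigr)$, since $q_\mu$ is a metric quotient. That set is relatively weakly compact, so $W\in\W(X_\mu)$. Therefore
\[
 \norm{T}_w\leq\norm{T-W}=\norm{S}\leq2\norm{T^{\co}}+3\varepsilon ,
\]
and letting $\varepsilon\to0$ gives the upper bound.

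The only delicate point is Step 2. The subspace $R_\mu$ is not complemented, so a weakly compact operator on $E_\mu$ need not descend to $X_\mu$. Theorem~\ref{c:thm:maincorrection} fixes this: it supplies an approximable $V$ that matches $G$ exactly on $R_\mu$ with essentially no norm loss. Given that theorem, the remaining steps are routine bookkeeping.
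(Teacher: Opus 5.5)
Your proposal is correct and matches the paper's proof step for step: you choose an almost optimal weakly compact approximant $F$ of $Tq_\mu$ via Lemma~\ref{c:lem:pullbacknorm}, correct $G=Tq_\mu-F$ on $R_\mu$ using Theorem~\ref{c:thm:maincorrection}, and descend through the metric quotient to get the factor $2$. The only cosmetic difference is that you factor $G-V$ as $Sq_\mu$ and put $W=T-S$, whereas the paper factors $F+V$ directly as $Wq_\mu$. These give the same operator, since $Tq_\mu=G+F$.
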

\begin{proof}[Proof of Theorem~\ref{c:thm:ambientnorm} and Theorem~\ref{c:thm:mainnorm}]
Fix $T\in\B(X_\mu)$ and $\varepsilon>0$. Lemma~\ref{c:lem:pullbacknorm} provides $F\in\W(E_\mu,X_\mu)$ such that
\[
 \norm{Tq_\mu -F}\leq\norm{T^{\co}}+\varepsilon.
\]
Put $G=Tq_\mu -F$. Since $q_\mu |_{R_\mu}=0$, we have $G|_{R_\mu}=-F|_{R_\mu}$. Apply Theorem~\ref{c:thm:maincorrection} to $G$ and $-F$, obtaining $V\in\Ap(E_\mu,X_\mu)$ with
\[
 V|_{R_\mu}=G|_{R_\mu},\qquad \norm{V}\leq\norm{G}+\varepsilon.
\]
The weakly compact operator $F+V$ vanishes on $R_\mu$, so it factors as $Wq_\mu$ for a bounded $W:X_\mu\to X_\mu$. The operator $W$ is weakly compact: $Wq_\mu$ is weakly compact and $q_\mu$ is a metric quotient. For completeness, $B_{X_\mu}\subset q_\mu (2B_{E_\mu})$, so $W(B_{X_\mu})$ is contained in the relatively weakly compact set $(Wq_\mu)(2B_{E_\mu})$.
The norm identity $\norm{Aq_\mu}=\norm{A}$ for a metric quotient now yields
\[
 \begin{aligned}
 \norm{T}_w
 &\leq\norm{T-W}=\norm{(T-W)q_\mu}=\norm{G-V}\\
 &\leq2\norm{G}+\varepsilon
 \leq2\norm{T^{\co}}+3\varepsilon.
 \end{aligned}
\]
Let $\varepsilon\downarrow0$ and combine with \eqref{c:eq:coineq}. A bounded-below map from the Banach space $\Qw(X_\mu)$ has closed range, giving the final assertion.
\end{proof}

\begin{proposition}[A genuine algebra homomorphism]\label{c:prop:algebralift}
Choose any lifting $A_T:E_\mu\to E_\mu$ with $q_\mu A_T=Tq_\mu$. Then
\begin{equation}\label{c:eq:algebralift}
 \Lambda:\Qw(X_\mu)\longrightarrow\Qw(E_\mu),\qquad
 \Lambda([T])=[A_T]
\end{equation}
is a well-defined unital injective homomorphism satisfying
\begin{equation}\label{c:eq:lambdanorm}
 \norm{\Lambda([T])}=\norm{T^{\co}},\qquad
 \tfrac12\norm{[T]}\leq\norm{\Lambda([T])}\leq\norm{[T]}.
\end{equation}
\end{proposition}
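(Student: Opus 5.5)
We have to show four things for $\Lambda$: it is well defined, it is multiplicative and unital, it satisfies the norm identity, and it is injective. The plan is to take them in that order, using Lemma~\ref{c:lem:lifting}, Lemma~\ref{c:lem:refquot}, Corollary~\ref{c:cor:ALco}, Lemma~\ref{c:lem:pullbacknorm} and Theorem~\ref{c:thm:ambientnorm}.

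\textbf{Existence of liftings.} Since $R_\mu$ is reflexive, Lemma~\ref{c:lem:lifting} applied to $Tq_\mu:E_\mu\to X_\mu$ gives a lifting $A_T$ with $\norm{A_T}=\norm{Tq_\mu}$.

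\textbf{Well-definedness.} First, independence of the lifting. If $A_T,A_T'$ both lift $Tq_\mu$, then $q_\mu(A_T-A_T')=0$, so $A_T-A_T'$ maps $E_\mu$ into $R_\mu$. A bounded operator into a reflexive space is weakly compact, hence $[A_T]=[A_T']$. Second, independence of the representative $T$. If $T-T'\in\W(X_\mu)$, then $A_T-A_{T'}$ lifts $(T-T')q_\mu$, which is weakly compact. By the last assertion of Lemma~\ref{c:lem:lifting}, every bounded lifting of a weakly compact operator is weakly compact.

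\textbf{Homomorphism properties.} Linearity follows because $\alpha A_T+A_S$ lifts $(\alpha T+S)q_\mu$. Multiplicativity follows from $q_\mu A_TA_S=Tq_\mu A_S=TSq_\mu$, so $A_TA_S$ is a lifting of $TS$ and $\Lambda([TS])=[A_TA_S]=\Lambda([T])\Lambda([S])$. The identity $\Id_{E_\mu}$ lifts $\Id_{X_\mu}$, which gives unitality.

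\textbf{Norms.} As in the proof of Lemma~\ref{c:lem:pullbacknorm}, $q_\mu^{\co}A_T^{\co}=T^{\co}q_\mu^{\co}$. Since $q_\mu^{\co}$ is a surjective isometry (Lemma~\ref{c:lem:refquot}), this gives $\norm{A_T^{\co}}=\norm{T^{\co}}$. Corollary~\ref{c:cor:ALco} on the $AL$-space $E_\mu$ then yields
\[
 \norm{\Lambda([T])}=\norm{A_T}_w=\norm{A_T^{\co}}=\norm{T^{\co}}.
\]
Theorem~\ref{c:thm:ambientnorm} gives $\tfrac12\norm{T}_w\leq\norm{T^{\co}}\leq\norm{T}_w$, which is \eqref{c:eq:lambdanorm}. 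Injectivity follows from the lower bound.

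\textbf{Main obstacle.} The only delicate point is that the representative-independence step must use weak compactness of \emph{arbitrary} liftings, not only of the chosen metric lifting. That is exactly what the final clause of Lemma~\ref{c:lem:lifting} supplies.
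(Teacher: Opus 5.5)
Your proposal is correct and follows essentially the same route as the paper. You get independence of the lifting from reflexivity of $R_\mu$, independence of the representative from the last clause of Lemma~\ref{c:lem:lifting}, and multiplicativity from $q_\mu A_TA_S=TSq_\mu$; the norm identity comes from Corollary~\ref{c:cor:ALco} together with the isometry $q_\mu^{\co}$, which is exactly the content of Lemma~\ref{c:lem:pullbacknorm}, and the bounds and injectivity come from Theorem~\ref{c:thm:ambientnorm}.
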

\begin{proof}
Two liftings differ by an operator with range in the reflexive space $R_\mu$, hence by a weakly compact operator. A lifting of a weakly compact $Tq_\mu$ is weakly compact by Lemma~\ref{c:lem:lifting}. These observations prove independence both of the lifting and of the representative $T$.
Sums and scalar multiples of liftings lift the corresponding operators, and
\[
 q_\mu A_TA_U=Tq_\mu A_U=TUq_\mu.
\]
Thus $\Lambda$ is a homomorphism, with $A_{\Id_{X_\mu}}=\Id_{E_\mu}$ available. Its norm equality is proved in Lemma~\ref{c:lem:pullbacknorm}; injectivity and the two-sided bounds follow from Theorem~\ref{c:thm:ambientnorm}.
\end{proof}

\begin{remark}
The proposition is not by itself an inheritance theorem for incompressibility. An embedding of one algebra into an incompressible algebra does not, without further structure, prove incompressibility of its domain. The next subsection supplies the additional quantitative factorization.
\end{remark}

\subsection{Embedding, factorization, and incompressibility}

By Theorem~\ref{thm:main}, followed by its isometric target realization, there is a normalized embedding, henceforth denoted by $J:X\to F_\Delta=L^1(0,1)$, satisfying
\begin{equation}\label{c:eq:embinput}
 \norm{x}\leq\norm{Jx}\leq\kappa\norm{x},\qquad \kappa\coloneqq10C.
\end{equation}
Indeed, multiply the embedding of Theorem~\ref{thm:main} by $2C$.
This is the precise input from Section~\ref{sec:embedding} used in the algebraic argument.

\begin{lemma}[Extending the quotient embedding]\label{c:lem:globalembedding}
Under the ambient condition \eqref{c:eq:ambient}, the map
\begin{equation}\label{c:eq:globalembedding}
 D_\mu:X_\mu\longrightarrow E_\mu\oplus_1F_\Delta,\qquad
 D_\mu(q_\mu f)=\bigl((\Id_{E_\mu}-\iota P)f,\ J(qPf)\bigr)
\end{equation}
is well-defined and satisfies
\begin{equation}\label{c:eq:globalembeddingnorm}
 \norm{x}\leq\norm{D_\mu x}\leq(\kappa+2)\norm{x}.
\end{equation}
Its target is an $AL$-space.
\end{lemma}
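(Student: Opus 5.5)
The plan is to verify the three assertions directly from the ambient condition \eqref{c:eq:ambient}, the quotient maps \eqref{c:eq:quotretraction}, and the normalized embedding \eqref{c:eq:embinput}. No new analytic input should be needed; the argument is bookkeeping with the retraction $P$.

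\emph{Well-definedness and linearity.} The formula is linear in $f$, so it suffices to show that it vanishes on $R_\mu=\iota R$. Take $f=\iota r$ with $r\in R$. Then $(\Id_{E_\mu}-\iota P)\iota r=\iota r-\iota(P\iota)r=0$ because $P\iota=\Id_E$. Likewise $qP\iota r=qr=0$, so $J(qP\iota r)=0$. Hence $D_\mu(q_\mu f)$ depends only on $q_\mu f$, and $D_\mu$ is a well-defined linear map on $X_\mu$.

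\emph{Upper bound.} Fix $x\in X_\mu$ and any representative $f$ with $q_\mu f=x$. Since $\norm{\iota P}\leq1$, we have $\norm{(\Id-\iota P)f}\leq2\norm{f}$. By \eqref{c:eq:embinput},
\[
 \norm{J(qPf)}\leq\kappa\norm{qPf}\leq\kappa\norm{Pf}\leq\kappa\norm{f}.
\]
Adding these in the $\oplus_1$ norm and taking the infimum over representatives $f$ gives $\norm{D_\mu x}\leq(\kappa+2)\norm{x}$.

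\emph{Lower bound.} This is the only step that uses more structure, namely that $\bar\iota$ is an isometry. Decompose $f=\iota Pf+(\Id-\iota P)f$. Then
\[
 \norm{x}\leq\norm{q_\mu\iota Pf}+\norm{q_\mu(\Id-\iota P)f}
 \leq\norm{\bar\iota(qPf)}+\norm{(\Id-\iota P)f}.
\]
Here we used $q_\mu\iota g=\bar\iota(qg)$ from \eqref{c:eq:quotretraction} and $\norm{q_\mu}\leq1$. Since $\bar P\bar\iota=\Id_X$ with both maps contractive, $\bar\iota$ is isometric, so $\norm{\bar\iota(qPf)}=\norm{qPf}$. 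By the left inequality in \eqref{c:eq:embinput}, $\norm{qPf}\leq\norm{J(qPf)}$. Therefore
\[
 \norm{x}\leq\norm{J(qPf)}+\norm{(\Id-\iota P)f}=\norm{D_\mu x}.
\]

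\emph{The target.} $E_\mu\oplus_1F_\Delta$ is a finite $\ell_1$-sum of $L^1$-spaces, hence an $AL$-space, as noted at the start of this section.

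I expect no serious obstacle. The main point to keep straight is that the lower bound must go through the isometry $\bar\iota$ rather than through $\iota$ directly, because $q_\mu\iota$ factors through $q$.
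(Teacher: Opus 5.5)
Your proposal is correct and follows the paper's proof. The steps match: well-definedness comes from $P\iota=\Id_E$, and the lower bound comes from $q_\mu\iota Pf=\bar\iota(qPf)$ together with the left inequality in \eqref{c:eq:embinput}. For the upper bound, the paper phrases the step via the induced map of $\Id_{E_\mu}-\iota P$ and $\norm{\bar P}\leq1$, which is the same as your infimum over representatives; note also that the lower bound only needs $\norm{\bar\iota}\leq1$, not the full isometry.
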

\begin{proof}
Both components in \eqref{c:eq:globalembedding} vanish on $\iota R$. The quotient retraction \eqref{c:eq:quotretraction} gives
\[
 \norm{q_\mu f}
 \leq\norm{(\Id_{E_\mu}-\iota P)f}+\norm{qPf}
 \leq\norm{D_\mu(q_\mu f)}.
\]
Since $\norm{\Id_{E_\mu}-\iota P}\leq2$ and this operator annihilates $R_\mu$, its induced map from $X_\mu$ has norm at most $2$. Also $\norm{qPf}=\norm{\bar P(q_\mu f)}\leq\norm{q_\mu f}$. These inequalities prove the upper bound. Additivity of the norm on positive elements holds in an $\ell_1$-sum of $AL$-spaces.
\end{proof}

\begin{proposition}[Quantitative transfer criterion]\label{c:prop:criterion}
Assume the ambient condition \eqref{c:eq:ambient}, and suppose $D:X_\mu\to F$ embeds $X_\mu$ into an $AL$-space with
\[
 m\norm{x}\leq\norm{Dx}\leq M\norm{x}\qquad(x\in X_\mu).
\]
Then every bounded injective homomorphism $\Phi:\Qw(X_\mu)\to B$ into a normed algebra satisfies
\begin{equation}\label{c:eq:criterion}
 \norm{\Phi(a)}\geq\frac{m}{4M\norm{\Phi}^2}\norm{a}
 \qquad(a\in\Qw(X_\mu)).
\end{equation}
\end{proposition}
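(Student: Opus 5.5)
Fix $a=[T]\in\Qw(X_\mu)$ with $a\neq0$, and let $P_1=jp$ be the idempotent of Lemma~\ref{c:lem:ellone}, with $[P_1]\neq0$. The idea is to factor the identity of $\ell_1$ through $T$ with norm control, and then transport this factorization through $\Phi$ using the fixed idempotent $[P_1]$.

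\textbf{Step 1 (factorization).} Put $A=DTq_\mu:E_\mu\to F$. This is an operator between $AL$-spaces, so Theorem~\ref{c:thm:ALinput} applies. We need a lower bound for $\norm{A}_w$. By Lemma~\ref{c:lem:coembedding}, $\norm{D^{\co}\xi}\geq\tfrac m2\norm{\xi}$. By Lemma~\ref{c:lem:refquot}, $q_\mu^{\co}$ is a surjective isometry. Together with \eqref{c:eq:coineq} these give
\[
 \norm{A}_w\geq\norm{A^{\co}}=\norm{D^{\co}T^{\co}q_\mu^{\co}}\geq\tfrac m2\norm{T^{\co}}\geq\tfrac m4\norm{T}_w.
\]
The last inequality is Theorem~\ref{c:thm:ambientnorm}. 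Fix $\eta>0$. Theorem~\ref{c:thm:ALinput} then yields $U:F\to\ell_1$ and $V:\ell_1\to E_\mu$ with $UAV=\Id_{\ell_1}$ and
\[
 \norm{U}\norm{V}\leq\frac{4(1+\eta)}{m\norm{a}}.
\]

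\textbf{Step 2 (transport to $X_\mu$).} Define $S_1=q_\mu Vp\in\B(X_\mu)$ and $S_2=jUD\in\B(X_\mu)$. Then
\[
 S_2TS_1=jUDTq_\mu Vp=jp=P_1.
\]
Rescaling $U,V$ so that $\norm{U}\norm{V}$ is split freely, we get
\[
 \norm{S_1}\norm{S_2}\leq\norm{V}\,M\norm{U}\leq\frac{4M(1+\eta)}{m\norm{a}},
\]
since $j,p,q_\mu$ are contractions. This step carries the key structural point: $D$ only needs to be bounded here, because its lower bound was already spent in Step~1.

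\textbf{Step 3 (apply $\Phi$).} From Step~2, $\Phi([P_1])=\Phi([S_2])\Phi(a)\Phi([S_1])$. Submultiplicativity gives
\[
 \norm{\Phi([P_1])}\leq\norm{\Phi}^2\,\norm{S_2}\norm{S_1}\,\norm{\Phi(a)}.
\]
Here $\norm{[S_i]}\leq\norm{S_i}$.

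\textbf{Step 4 (main obstacle: a lower bound for $\norm{\Phi([P_1])}$ independent of $\Phi$ beyond injectivity).} Since $[P_1]$ is a nonzero idempotent and $\Phi$ is injective, $e=\Phi([P_1])$ is a nonzero idempotent in a normed algebra. Submultiplicativity then gives
\[
 \norm{e}=\norm{e^2}\leq\norm{e}^2,
\]
so $\norm{e}\geq1$. This is precisely where injectivity of $\Phi$ enters. Combining with Step~3 gives
\[
 1\leq\norm{\Phi}^2\,\frac{4M(1+\eta)}{m\norm{a}}\,\norm{\Phi(a)}.
\]
Letting $\eta\downarrow0$ yields \eqref{c:eq:criterion}.

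The main care is in Step~1: correctly chaining the factor $2$ from Lemma~\ref{c:lem:coembedding} with the factor $2$ from Theorem~\ref{c:thm:ambientnorm} to obtain the constant $4$.
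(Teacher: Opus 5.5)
Your proposal is correct and follows essentially the same route as the paper: the residuum chain gives $\norm{DTq_\mu}_w\geq\tfrac m4\norm{T}_w$, the Acuaviva--Nasseri factorization is transported by $j,p,q_\mu,D$ to the fixed idempotent $[P_1]$, and $\norm{\Phi([P_1])}\geq1$ concludes. The differences are cosmetic and do not affect the argument. You use only the inequality \eqref{c:eq:coineq} instead of the equality from Corollary~\ref{c:cor:ALco}, and a multiplicative slack $(1+\eta)$ instead of the paper's additive one (the paper also records the intermediate bound $\norm{\Phi([T])}\geq\delta_D(T)/(M\norm{\Phi}^2)$).
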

\begin{proof}
For $T\in\B(X_\mu)$ define
\[
 \delta_D(T)=\norm{DTq_\mu}_{w,E_\mu\to F}.
\]
Both $E_\mu$ and $F$ are $AL$-spaces. Hence Corollary~\ref{c:cor:ALco}, Lemma~\ref{c:lem:coembedding}, the isometry $q_\mu^{\co}$, and Theorem~\ref{c:thm:ambientnorm} imply
\begin{equation}\label{c:eq:deltabound}
 \begin{aligned}
 \delta_D(T)
 &=\norm{D^{\co}T^{\co}q_\mu^{\co}}\\
 &\geq\tfrac m2\norm{T^{\co}}
 \geq\tfrac m4\norm{T}_w.
 \end{aligned}
\end{equation}
There is nothing to prove if $[T]=0$. Otherwise $\delta_D(T)>0$. For $0<\eta<\delta_D(T)$, Theorem~\ref{c:thm:ALinput} gives
\[
 U:F\to\ell_1,\qquad V:\ell_1\to E_\mu,\qquad
 UDTq_\mu V=\Id_{\ell_1},\qquad
 \norm{U}\norm{V}<\frac1{\delta_D(T)-\eta}.
\]
Set $u=UD:X_\mu\to\ell_1$ and $v=q_\mu V:\ell_1\to X_\mu$. Then
\begin{equation}\label{c:eq:transferredfactorization}
 uTv=\Id_{\ell_1},\qquad
 \norm{u}\norm{v}<\frac{M}{\delta_D(T)-\eta}.
\end{equation}
Use $j,p,P_1$ from Lemma~\ref{c:lem:ellone}, and let $e=[P_1]\in\Qw(X_\mu)$. It is a nonzero idempotent, and
\[
 [ju]\,[T]\,[vp]=e.
\]
The element $\Phi(e)$ is a nonzero idempotent in $B$, so submultiplicativity gives $\norm{\Phi(e)}\geq1$. Therefore
\[
 1\leq\norm{\Phi}^2\norm{u}\norm{v}\norm{\Phi([T])}.
\]
Using \eqref{c:eq:transferredfactorization} and letting $\eta\downarrow0$, we obtain
\[
 \norm{\Phi([T])}\geq\frac{\delta_D(T)}{M\norm{\Phi}^2}.
\]
Equation~\eqref{c:eq:deltabound} completes the proof.
\end{proof}

\begin{theorem}[Incompressibility for the ambient quotient]\label{c:thm:ambientalgebra}
Assume \eqref{c:eq:ambient}, with $\kappa$ as in \eqref{c:eq:embinput}.
For every normed algebra $B$ and bounded injective homomorphism $\Phi:\Qw(X_\mu)\to B$,
\[
 \norm{\Phi(a)}\geq
 \frac{\norm{a}}{4(\kappa+2)\norm{\Phi}^2}\qquad(a\in\Qw(X_\mu)).
\]
For $X$ the denominator improves to $4\kappa\norm{\Phi}^2$.
\end{theorem}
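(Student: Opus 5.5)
The plan is to apply Proposition~\ref{c:prop:criterion} directly, with the embedding supplied by Lemma~\ref{c:lem:globalembedding}. Under the ambient condition \eqref{c:eq:ambient}, the map $D_\mu:X_\mu\to E_\mu\oplus_1F_\Delta$ takes values in an $AL$-space, since an $\ell_1$-sum of $L^1$-spaces is an $AL$-space. By \eqref{c:eq:globalembeddingnorm} it satisfies $m\norm{x}\leq\norm{D_\mu x}\leq M\norm{x}$ with $m=1$ and $M=\kappa+2$. Substituting these into \eqref{c:eq:criterion} gives
\[
 \norm{\Phi(a)}\geq\frac{\norm{a}}{4(\kappa+2)\norm{\Phi}^2}.
\]
This is the general assertion, and it needs no new estimate beyond those already proved.

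For the classical case $X$, take $E_\mu=E$ and $\iota=P=\Id_E$. Here I would not use $D_\mu$, whose first component $(\Id-\iota P)f$ vanishes identically but still costs the additive $2$. Instead I would use the normalized embedding $J:X\to F_\Delta=L^1(0,1)$ of \eqref{c:eq:embinput}, which satisfies $\norm{x}\leq\norm{Jx}\leq\kappa\norm{x}$. Its target $L^1(0,1)$ is an $AL$-space, so Proposition~\ref{c:prop:criterion} applies with $m=1$ and $M=\kappa$. This yields the denominator $4\kappa\norm{\Phi}^2$.

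Since $\kappa=10C$, this is $40C\norm{\Phi}^2$, which is exactly \eqref{c:eq:mainalgebra}. This also proves Theorem~\ref{c:thm:mainalgebra}, because uniform incompressibility follows when one takes $\norm{\Phi}\leq1$.

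The only point needing care is checking that the hypotheses of Proposition~\ref{c:prop:criterion} hold verbatim. The proposition requires a bounded-below embedding of $X_\mu$ into an $AL$-space, and the homomorphism $\Phi$ is only assumed bounded and injective, not unital. The nonunital case is already handled inside the proof of the proposition through the nonzero idempotent $[P_1]$ of Lemma~\ref{c:lem:ellone}. So no real obstacle remains; the substantive work is in Theorem~\ref{thm:main} and Theorem~\ref{c:thm:ambientnorm}, which have already been established.
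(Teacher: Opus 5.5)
Your proposal is correct and matches the paper's proof: you apply Proposition~\ref{c:prop:criterion} to $D_\mu$ from Lemma~\ref{c:lem:globalembedding} with $m=1$, $M=\kappa+2$, and for $X$ you apply it directly to $J$ with $m=1$, $M=\kappa$, which gives the denominator $4\kappa\norm{\Phi}^2=40C\norm{\Phi}^2$. Your side remarks are also accurate: the first component of $D_\mu$ vanishes when $\iota=P=\Id_E$, and the non-unital case is handled by the idempotent $[P_1]$ inside the proposition.
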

\begin{proof}[Proof of Theorem~\ref{c:thm:ambientalgebra} and Theorem~\ref{c:thm:mainalgebra}]
Apply Proposition~\ref{c:prop:criterion} to $D_\mu$ from Lemma~\ref{c:lem:globalembedding}, with $m=1$ and $M=\kappa+2$. For $X_\mu=X$ use $D=J$ directly, with $m=1$ and $M=\kappa$. For a contractive $\Phi$, the resulting lower bound is at least $1/[4(\kappa+2)]$, respectively $1/(4\kappa)$.
\end{proof}

\begin{remark}[Why the sandwich need not be multiplicative]\label{c:rem:sandwich}
The argument transfers factorizations through $DTq_\mu$; it does not claim that $T\mapsto DTq_\mu$ is an algebra homomorphism. Even when $F=E_\mu$,
\[
 (DTq_\mu)(DUq_\mu)=DT(q_\mu D)Uq_\mu,
\]
which need not be $DTUq_\mu$. Nor is an arbitrary weakly compact approximation to $DTq_\mu$ assumed to annihilate $R_\mu$ or have range in $D(X_\mu)$. The compact correction theorem resolves the needed approximation problem before the sandwich is used.
\end{remark}

\subsection*{Acknowledgement}
The author used ChatGPT to assist with proof development, reference checking, and manuscript preparation.

\end{document}